\newtheorem{tm}{Theorem}[section]
\newtheorem{lm}[tm]{Lemma}
\newtheorem{df}[tm]{Definition}
\newtheorem{pr}[tm]{Proposition}
\newtheorem{ex}[tm]{Example}
\newcommand{\subscripts}[3]{%
  \@mathmeasure\z@\displaystyle{#2}%
  \global\setbox\@ne\vbox to\ht\z@{}\dp\@ne\dp\z@
  \setbox\tw@\box\@ne
  \@mathmeasure4\displaystyle{\copy\tw@_{#1}}%
  \@mathmeasure6\displaystyle{{#2}_{#3}}%
  \dimen@-\wd6 \advance\dimen@\wd4 \advance\dimen@\wd\z@
  \hbox to\dimen@{}\mathop{\kern-\dimen@\box4\box6}%
}
\newcommand{\nn}{\nonumber}
\newcommand{\h}{\mathrm{H}}
\newcommand{\e}{\mathrm{e}}
\newcommand{\x}{\bm{\mathrm{x}}}
\newcommand{\ve}{\varepsilon}
\newcommand{\Ker}{\mathrm{Ker}\,}
\newcommand{\del}{\partial}
\newcommand{\ol}{\overline}
\newcommand{\vx}{\vec{x}}
\newcommand{\vs}{\vec{s}}
\newcommand{\vt}{\vec{t}}
\newcommand{\vsg}{\vec{\sigma}}
\newcommand{\va}{\vec{a}}
\newcommand{\vb}{\vec{b}}
\newcommand{\ii}{\mathrm{i}}
\newcommand{\la}{\langle}
\newcommand{\ra}{\rangle}
\newcommand{\vc}{\vec{c}}
\newcommand{\vk}{\vec{k}}
\begin{document}
\title[Random walks on crystal lattices and multiple zeta functions]{
Random walks on crystal lattices 
and multiple zeta functions
}
\author[T. Aoyama]{Takahiro Aoyama}
\author[R. Namba]{Ryuya Namba}
\date{\today}
\address[T. Aoyama]{Department of Applied Mathematics, 
Faculty of Science, 
Okayama University of Science, 
1-1 Ridaicho, Kita-ku, Okayama 700-0005, Japan}
\email{{\tt{aoyama@xmath.ous.ac.jp}}}
\address[R. Namba]{Department of Mathematics,
Faculty of Education,
Shizuoka University, 836, Ohya, Suruga-ku, Shizuoka, 
422-8529, Japan}
\email{{\tt{namba.ryuya@shizuoka.ac.jp}}}
\subjclass[2010]{Primary 05C81; Secondary 11M32, 60E05.}
\keywords{random walk; crystal lattice; multiple zeta function.}


\maketitle 
%
%
\begin{abstract}
Crystal lattices are known to be one of the generalizations of classical periodic lattices
which can be embedded into some Euclidean spaces properly.  
As to make a wide range of multidimensional discrete distributions on Euclidean spaces 
more treatable, 
multidimensional Euler products and multidimensional Shintani zeta functions 
on crystal lattices are introduced. 
They are completely different from existing 
Ihara zeta functions on graphs in that 
our zeta functions are defined on crystal lattices directly. 
Via a concept of periodic realizations of crystal lattices, 
we make it possible to provide many kinds of multidimensional discrete distributions explicitly.  
In particular, random walks on crystal lattices whose range is infinite 
and such random walks whose range is finite are constructed
by multidimensional Euler products and multidimensional Shintani zeta functions, respectively. 
We give some comprehensible examples as well. 
\end{abstract}


\section{{\bf Introduction}}

\subsection{Riemann zeta functions in probabilistic view}

Zeta functions have been investigated extensively 
in number theory and other areas among mathematics. 
The most classical one is known as the {\it Riemann zeta function} $\zeta(s)$. It is a function of a complex variable $s=\sigma+\ii t$ for $\sigma>1$ and $t\in\mathbb{R}$ given by
\begin{align}
\zeta(s):=\sum_{n=1}^\infty \frac{1}{n^s} 
=\prod_{p \in \mathbb{P}}\Big(1-\frac{1}{p^s}\Big)^{-1},\label{Euler}
\end{align}
where we denote by $\mathbb{P}$ the set of all prime numbers.
The product representation of (\ref{Euler})  is called the {\it Euler product}. 
It is known that the function $\zeta(s)$ converges absolutely in the half-plane 
$\{\sigma+\ii t \, | \, \sigma>1\}$ and uniformly
in every compact subset of the half-plane. 
We also note that the Riemann zeta function can be extended to a 
meromorphic function on the 
complex plane $\mathbb{C}$ having a single pole at $s=1$ by analytic continuation. 

On the other hand, there exists a well-known relation between 
the Riemann zeta function $\zeta(s)$ and probability distributions on
$\mathbb{R}$.  
We refer to \cite{JW}, \cite{Khinchine}, \cite{GK}
and \cite{LH01}.
Afterwards, Biane, Pitman and Yor \cite{BPY} reviewed some known results on  the Riemann zeta function in 
probabilistic view. 
See also \cite{BHNY}
for an interesting topic among the Riemann zeta function, Jacobi theta function
and some probability distributions.

Before we state the relation, we review a terminology in probability theory. 

\begin{df}[infinite divisible distribution] 
A probability measure $\mu$ on $\mathbb{R}^d$ 
is said to be infinitely divisible if, 
for any positive integer $n$, there exists a probability measure $\mu_n$ on $\mathbb{R}^d$ such that 
$\mu=\mu_n^{*n}$, where 
$\mu_n^{*n}$ is the $n$-fold convolution of $\mu_n$ itself. 
\end{df}

We denote by $I(\mathbb{R}^d)$ the set of all infinitely divisible probability measures on $\mathbb{R}^d$. 
Let $\mu$ be a probability measure on $\mathbb{R}^d$ and 
\[
\widehat{\mu}(\vt):=\int_{\mathbb{R}^d}
\e^{\ii \la \vt, \vx \ra} \, \mu(d\vx), \qquad \vt \in \mathbb{R}^d,
\]
the characteristic function of $\mu$, 
where $\la \cdot, \cdot \ra$ is 
the usual inner product on $\mathbb{R}^d$.
It is easily seen that $\mu \in I(\mathbb{R}^d)$ if and only if the $n$-th root of its characteristic function $\widehat{\mu}^{1/n}$ is also a characteristic function for every $n \in \mathbb{N}$. 
Note that this class is important when we consider probabilistic limit theorems given by sums of independent and identically distributed random variables such as the central limit theorem and the law of large numbers.

\begin{ex}[compound Poisson distribution]\normalfont
The class of {\it compound Poisson distributions} is known as one of 
the most important subclasses of infinitely divisible distributions. 
A probability distribution $\mu_{\mathrm{CP}}$ 
is compound Poisson if its characteristic function is given by 
$$
\widehat{\mu_{\mathrm{CP}}}(\vec{t})=\exp\Big( \lambda\big(\widehat{\rho}(\vec{t}) - 1\big)\Big), \qquad \vec{t} \in \mathbb{R}^d,
$$
for some $\lambda>0$ and some distribution $\rho$ with $\rho(\{0\})=0$. 
Note that the Poisson distribution is a special case when $d=1$ and $\rho=\delta_1$, where $\delta_1$ is the delta 
measure at $x=1$. 

Let $Y$ be an $\mathbb{R}^d$-valued 
random variable whose distribution is $\rho$. 
We take independent copies $Y_1, Y_2, \dots$ of $Y$ and a Poisson random variable $K$ with 
a parameter $\lambda>0$ independent of $Y$. Then the characteristic function $\widehat{\mu_Z}$ of a random variable 
$Z:=Y_1+Y_2+\cdots+Y_K$ is given by
$$
\begin{aligned}
\widehat{\mu_Z}(\vec{t})&=\mathbf{E}\big[\e^{\ii \la \vec{t}, Y_1+Y_2+\cdots+Y_K\ra}\big]
=\sum_{n=1}^\infty \mathbf{E}\big[\e^{\ii \la \vec{t}, Y_1+Y_2+\cdots+Y_n\ra} :  K=n\big]\\
&=\sum_{n=1}^\infty \widehat{\rho}(\vec{t})^n \cdot \Big(\e^{-\lambda} \frac{\lambda^n}{n!}\Big)
=\exp\Big( \lambda\big(\widehat{\rho}(\vec{t}) - 1\big)\Big), \qquad 
\vec{t} \in \mathbb{R}^d,
\end{aligned}
$$
which means that the distribution of $Z$ is of compound Poisson. 

\end{ex}

The following is well-known. 

\begin{pr}[L\'evy--Khintchine representation, cf.\,{\cite[Theorem 8.1]{Sato}}]
\label{Levy-Khintchine}
{\rm (1)} If $\mu \in I(\mathbb{R}^d)$, then we have
\begin{equation}\label{LK}
\widehat{\mu}(\vec{t})=\exp\Big(-\frac{1}{2}\la A\vec{t}, \vec{t}\ra + \ii \la \vec{\gamma}, \vec{t}\ra
+\int_{\mathbb{R}^d}\Big(\e^{\ii \la \vec{t}, \vec{x}\ra}-1-\frac{\ii \la \vec{t}, \vec{x}\ra}{1+|\vec{x}|^2}\Big) \, \nu(d\vec{x})\Big), \quad \vec{t} \in \mathbb{R}^d,
\end{equation}
where $A$ is a symmetric non-negative definite $d \times d$-matrix, 
$\nu$ is a measure on $\mathbb{R}^d$ satisfying
\begin{equation}\label{Levy measure}
\nu(\{0\})=0 \quad \text{and} \quad \int_{\mathbb{R}^d}\min\{|\vx|^2, 1\} \, \nu(d\vx)<+\infty
\end{equation}
and $\vec{\gamma} \in \mathbb{R}^d$. 

\vspace{2mm}
\noindent
{\rm (2)} The representation of
$\widehat{\mu}$ in \eqref{LK} 
by $A$, $\nu$ and $\vec{\gamma}$ 
is unique. 

\vspace{2mm}
\noindent
{\rm (3)} Conversely, if $A$ is a symmetric non-negative definite $d \times d$-matrix, 
$\nu$ is a measure on $\mathbb{R}^d$ satisfying \eqref{Levy measure} and $\vec{\gamma} \in \mathbb{R}^d$, then there exists $\mu \in I(\mathbb{R}^d)$ whose characteristic function
is given by \eqref{LK}. 
\end{pr}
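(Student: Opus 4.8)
The plan is to treat the three assertions in the order (3), (1), (2), building throughout on the compound Poisson distributions of the preceding Example, whose characteristic functions already have the exponential form appearing in \eqref{LK}. For part (3) I would first argue that the right-hand side of \eqref{LK} is a characteristic function. Writing $\nu_n$ for the restriction of $\nu$ to $\{\vx\in\mathbb{R}^d : |\vx|>1/n\}$, condition \eqref{Levy measure} guarantees that $\nu_n$ is a finite measure, and I set
\[
\widehat{\mu^{(n)}}(\vt):=\exp\Big(-\frac{1}{2}\la A\vt,\vt\ra+\ii\la\vec{\gamma},\vt\ra+\int_{\mathbb{R}^d}\Big(\e^{\ii\la\vt,\vx\ra}-1-\frac{\ii\la\vt,\vx\ra}{1+|\vx|^2}\Big)\,\nu_n(d\vx)\Big).
\]
Since $\nu_n$ is finite, the integral separates into the exponent of a compound Poisson distribution (as in the Example above) and a deterministic linear term, so that $\widehat{\mu^{(n)}}$ is the characteristic function of a convolution of a Gaussian law, a shift and a compound Poisson law. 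The elementary bound $|\e^{\ii\la\vt,\vx\ra}-1-\ii\la\vt,\vx\ra/(1+|\vx|^2)|\le C(\vt)\min\{|\vx|^2,1\}$ together with \eqref{Levy measure} lets me pass to the limit $n\to\infty$ by dominated convergence; the exponents converge, locally uniformly in $\vt$, to the exponent of \eqref{LK}, whose limit is continuous at $\vt=\vec{0}$. By L\'evy's continuity theorem the limit is the characteristic function of some probability measure $\mu$. Infinite divisibility is then immediate, since replacing $(A,\nu,\vec{\gamma})$ by $(A/n,\nu/n,\vec{\gamma}/n)$ produces, by the very same construction, a characteristic function equal to $\widehat{\mu}^{1/n}$; hence $\mu\in I(\mathbb{R}^d)$.

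For part (1), the crucial preliminary is that $\widehat{\mu}$ never vanishes, so that a continuous branch $\psi:=\log\widehat{\mu}$ with $\psi(\vec{0})=0$ is well defined; this follows from $\widehat{\mu}=(\widehat{\mu_n})^n$ together with $\widehat{\mu_n}(\vt)\to1$ locally uniformly. Consequently $n\big(\widehat{\mu_n}(\vt)-1\big)\to\psi(\vt)$, which is exactly the statement that the compound Poisson distributions with L\'evy measures $\rho_n(d\vx):=n\,\mu_n(d\vx)$ converge weakly to $\mu$. The substance of the argument is to pass to the limit inside the exponent: I would show that the finite measures $\min\{|\vx|^2,1\}\,\rho_n(d\vx)$ are tight, extract a vaguely convergent subsequence with limit $\widetilde{\nu}$, and identify the L\'evy measure $\nu$ on $\mathbb{R}^d\setminus\{\vec{0}\}$ through the relation $\min\{|\vx|^2,1\}\,\nu(d\vx)=\widetilde{\nu}(d\vx)$ there, while the mass of $\widetilde{\nu}$ concentrating at the origin produces the non-negative definite matrix $A$ and the surviving linear part produces $\vec{\gamma}$.

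The main obstacle is precisely this extraction. The total mass of $\rho_n$ near the origin is in general unbounded, and one must show that its quadratic behaviour condenses into a single non-negative definite matrix $A$ while the contribution of $\rho_n$ away from the origin converges to a genuine measure satisfying \eqref{Levy measure}; this rests on the tightness of $\min\{|\vx|^2,1\}\,\rho_n$ and on careful bookkeeping of the centering vectors so that they do not diverge. Finally, for the uniqueness in part (2), I would use that $\widehat{\mu}$ determines its zero-free continuous logarithm $\psi$ uniquely. Applying to $\psi$ a symmetric second-difference operator in $\vt$ that annihilates the quadratic form $\la A\vt,\vt\ra$ and the linear term $\la\vec{\gamma},\vt\ra$ recovers the integral of a fixed test kernel against $\nu$, and letting the test kernels range over a sufficiently rich family determines $\nu$ uniquely; the matrix $A$ is then read off from the quadratic growth of $\psi$ at infinity after subtracting the $\nu$-part, and $\vec{\gamma}$ from what remains.
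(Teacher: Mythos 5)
You should know at the outset that the paper does not prove this proposition at all: it is imported verbatim as a classical result with the citation to Sato's book (Theorem 8.1), so there is no internal argument to compare yours against; the only fair benchmark is the standard textbook proof you are implicitly reconstructing. Measured against that, your route is the standard one. Part (3) is correct and complete as sketched: restricting $\nu$ to $\{|\vx|>1/n\}$, recognizing each approximant as a convolution of a Gaussian law, a shift and a compound Poisson law, passing to the limit via the bound $|\e^{\ii\la\vt,\vx\ra}-1-\ii\la\vt,\vx\ra/(1+|\vx|^2)|\le C(\vt)\min\{|\vx|^2,1\}$ and L\'evy's continuity theorem, and getting infinite divisibility from the linearity of the exponent in $(A,\nu,\vec{\gamma})$. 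The preliminaries of part (1) (non-vanishing of $\widehat{\mu}$, the continuous logarithm $\psi$, and $n(\widehat{\mu_n}-1)\to\psi$) are also standard and fine.

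The genuine gap sits exactly where you flag ``the main obstacle'' and then stop: you never prove the uniform boundedness and tightness of the measures $\lambda_n(d\vx):=\min\{|\vx|^2,1\}\,n\,\mu_n(d\vx)$, nor the boundedness of the centering vectors $\int_{\mathbb{R}^d}\vx(1+|\vx|^2)^{-1}n\,\mu_n(d\vx)$, and this is the analytic heart of the whole theorem --- everything else in part (1) is routine. The standard way to close it is a truncation inequality: integrating $n(1-\mathrm{Re}\,\widehat{\mu_n})$, which converges locally uniformly to $-\mathrm{Re}\,\psi$, over a box of $\vt$-values bounds both $n\,\mu_n(\{|\vx|\ge1\})$ and $n\int_{|\vx|<1}|\vx|^2\,\mu_n(d\vx)$ by constants depending only on $\psi$; without some such estimate the subsequence extraction cannot even start. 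A second, smaller defect: identifying $A$ with ``the mass of $\widetilde{\nu}$ concentrating at the origin'' cannot work as stated in dimension $d\ge2$, because a scalar atom at $\vec{0}$ cannot determine a matrix. The underlying obstruction is that the test kernel $\big(\e^{\ii\la\vt,\vx\ra}-1-\ii\la\vt,\vx\ra/(1+|\vx|^2)\big)/\min\{|\vx|^2,1\}$ has no limit as $\vx\to\vec{0}$ (its limiting value $-\tfrac12\la\vt,\vx/|\vx|\ra^2$ depends on the direction of approach), so weak convergence of $\lambda_n$ alone does not allow passage to the limit; one must retain the second-moment matrices $\int_{|\vx|\le\ve}\vx\,\vx^{T}\,n\,\mu_n(d\vx)$ and take a diagonal limit $\ve\downarrow0$. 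Finally, in part (2), a symmetric second difference does not annihilate $\la A\vt,\vt\ra$ --- it turns it into the constant $2\la A\vec{h},\vec{h}\ra$ --- but this is harmless (after Fourier inversion the constant appears as an atom at $\vec{0}$, which is in fact how $A$ is usually recovered), and your alternative of reading $A$ off from $\lim_{u\to\infty}u^{-2}\psi(u\vt)=-\tfrac12\la A\vt,\vt\ra$ is also legitimate.
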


The measure 
$\nu$ and the triplet $(A, \nu, \vec{\gamma})$ are called a L\'evy measure  and 
a L\'evy--Khintchine triplet of $\mu \in I(\mathbb{R}^d)$, respectively. 
There is another form of \eqref{LK} if the L\'evy measure $\nu$ satisfies an additional condition.

\begin{pr}[cf.\,{\cite[Section 8]{Sato}}]\label{LK-formula}
In Proposition {\rm \ref{Levy-Khintchine}}, 
if  the L\'evy measure $\nu$ in \eqref{LK} satisfies $\int_{|\vx|<1}|\vx| \, \nu(d\vx)<+\infty$, 
then we can rewrite the representation \eqref{LK} by 
\begin{equation}\label{LK2}
\widehat{\mu}(\vec{t})=\exp\Big(-\frac{1}{2}\la A\vec{t}, \vec{t}\ra + \ii \la \vec{\gamma}_0, \vec{t}\ra
+\int_{\mathbb{R}^d}\big(\e^{\ii \la \vec{t}, \vec{x}\ra}-1\big) \, \nu(d\vx)\Big), \qquad \vec{t} \in \mathbb{R}^d,
\end{equation}
where $\vec{\gamma}_0:=\vec{\gamma} - \int_{|\vx|<1}\vx(1+|\vx|^2)^{-1} \, \nu(d\vx)$. 
\end{pr}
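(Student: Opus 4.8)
The plan is to treat the statement as a pure rewriting of the exponent in \eqref{LK}: the integrand there differs from the one in \eqref{LK2} only by the compensating term $-\ii\la\vec{t},\vx\ra(1+|\vx|^2)^{-1}$, and the whole role of the extra hypothesis $\int_{|\vx|<1}|\vx|\,\nu(d\vx)<+\infty$ is to let me peel off this term and absorb it into the linear drift. The integral in \eqref{LK} converges only thanks to the cancellation near the origin between $\e^{\ii\la\vec{t},\vx\ra}-1$ and its first-order compensator; the hypothesis is exactly what guarantees that, once they are separated, each of the two resulting integrals converges absolutely on its own.

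First I would record two integrability facts. For the principal part I split the domain at $|\vx|=1$: near the origin the elementary bound $|\e^{\ii\la\vec{t},\vx\ra}-1|\le|\vec{t}|\,|\vx|$ together with the hypothesis gives $\int_{|\vx|<1}|\e^{\ii\la\vec{t},\vx\ra}-1|\,\nu(d\vx)<+\infty$, while on $\{|\vx|\ge1\}$ the integrand is bounded by $2$ and $\nu(\{|\vx|\ge1\})<+\infty$ by the standing L\'evy condition \eqref{Levy measure}. For the compensator I use $|\vx|(1+|\vx|^2)^{-1}\le|\vx|$ on $\{|\vx|<1\}$ with the hypothesis and $|\vx|(1+|\vx|^2)^{-1}\le\tfrac12$ on $\{|\vx|\ge1\}$ with the finiteness of $\nu$ there, so that the vector $\int_{\mathbb{R}^d}\vx(1+|\vx|^2)^{-1}\,\nu(d\vx)$ is well defined.

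With both integrals absolutely convergent, I would then split
\begin{align*}
\int_{\mathbb{R}^d}\Big(\e^{\ii\la\vec{t},\vx\ra}-1-\frac{\ii\la\vec{t},\vx\ra}{1+|\vx|^2}\Big)\,\nu(d\vx)
=\int_{\mathbb{R}^d}\big(\e^{\ii\la\vec{t},\vx\ra}-1\big)\,\nu(d\vx)
-\ii\Big\la\int_{\mathbb{R}^d}\frac{\vx}{1+|\vx|^2}\,\nu(d\vx),\,\vec{t}\Big\ra,
\end{align*}
pulling the inner product out of the integral by linearity of $\vx\mapsto\ii\la\vec{t},\vx\ra$. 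Substituting this into \eqref{LK} and collecting the two terms that are linear in $\vec{t}$ produces precisely the exponent of \eqref{LK2}, the constant vector $\int_{\mathbb{R}^d}\vx(1+|\vx|^2)^{-1}\,\nu(d\vx)$ merging with $\vec{\gamma}$ to form the drift $\vec{\gamma}_0$.

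I do not expect a genuine obstacle, since this is a routine application of absolute convergence once the hypothesis is available; the one point demanding attention is the bookkeeping on $\{|\vx|\ge1\}$, where the compensator also contributes to the constant vector absorbed into $\vec{\gamma}_0$, so that the drift correction should be read as an integral of $\vx(1+|\vx|^2)^{-1}$ against $\nu$ over all of $\mathbb{R}^d$ rather than over the unit ball alone.
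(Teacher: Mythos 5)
Your proof is correct, and in fact there is no proof in the paper to compare it against: the proposition is quoted from Sato's book (Section 8) without argument, and your computation---absolute convergence of the two pieces separately, then splitting the integral and folding the linear term into the drift---is precisely the routine verification that the citation stands in for. The point worth stressing is your closing remark, which is not minor bookkeeping but an actual correction to the statement as printed. Since the compensator $\ii\la\vec{t},\vx\ra(1+|\vx|^2)^{-1}$ appears in \eqref{LK} for \emph{all} $\vx$, the drift shift your splitting produces is
\[
\vec{\gamma}_0=\vec{\gamma}-\int_{\mathbb{R}^d}\frac{\vx}{1+|\vx|^2}\,\nu(d\vx),
\]
whereas the proposition defines $\vec{\gamma}_0$ by integrating over $\{|\vx|<1\}$ only; the two differ by $\int_{|\vx|\ge 1}\vx(1+|\vx|^2)^{-1}\,\nu(d\vx)$, which is finite (bounded by $\tfrac12\nu(\{|\vx|\ge 1\})$) but in general nonzero. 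The restricted domain appears to be carried over from Sato's Remark 8.4, where the L\'evy--Khintchine formula is written with the truncated compensator $\ii\la\vec{t},\vx\ra\mathbf{1}_{\{|\vx|\le 1\}}(\vx)$ and the drift correction is then genuinely supported on the unit ball; with the $(1+|\vx|^2)^{-1}$ compensator used in \eqref{LK}, that restriction is wrong, and your version of $\vec{\gamma}_0$ is the one consistent with the paper's own form of the representation. (None of the later applications are affected, since there the zeta-induced L\'evy measures are finite and the representation \eqref{LK2} is used directly, but the statement itself should be read with your correction.)
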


Let us go back to the Riemann zeta function $\zeta(s)$.  
Next we introduce a class of probability distributions on $\mathbb{R}$ generated by $\zeta(s)$. 

\begin{df}[Riemann zeta distribution on $\mathbb{R}$, cf.\,\cite{GK}]
Fix $\sigma>1$. A probability distibution $\mu_\sigma$ 
is called a Riemann zeta distribution if 
\begin{equation}\label{Riemann zeta dist}
\mu_\sigma(\{-\log n\})=\frac{n^{-\sigma}}{\zeta(\sigma)}, \qquad n \in \mathbb{N}. 
\end{equation}
\end{df}

It is readily verified that the function given by
$$
f_{\sigma}(t):=\frac{\zeta(\sigma+\ii t)}{\zeta(\sigma)}, \qquad t \in \mathbb{R},
$$
coincides with the characteristic function of $\mu_\sigma$ (see e.g., \cite{GK}).  
Moreover,  the Riemann zeta distribution is known to be  infinitely divisible. 

\begin{pr}[cf.\,\cite{GK}]
Let $\mu_{\sigma}$ be a Riemann zeta distribution on $\mathbb{R}$.
Then, $\mu_{\sigma}$ is of compound Poisson on 
$\mathbb{R}$ and
\begin{align}
\log f_{\sigma}(t)=
\sum_{p \in \mathbb{P}}\sum_{r=1}^{\infty}\frac{p^{-r\sigma}}{r}\left(\e^{-{\rm i}rt\log p}-1\right)
=\int_0^{\infty}\left(\e^{-{\rm i}tx}-1\right)N_{\sigma}(dx), \qquad 
t \in \mathbb{R},
\label{zeta CF}
\end{align}
where $N_{\sigma}$ is a finite L\'evy measure on $\mathbb{R}$ given by
\begin{align*}
N_{\sigma}(dx)=\sum_{p \in \mathbb{P}}\sum_{r=1}^{\infty}\frac{p^{-r\sigma}}{r}\delta_{r\log p}(dx).
\end{align*}
\end{pr}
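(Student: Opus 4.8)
The plan is to start from the characteristic function $f_\sigma(t)=\zeta(\sigma+\ii t)/\zeta(\sigma)$ identified just above the statement and to compute its logarithm directly through the Euler product \eqref{Euler}. Since $\sigma>1$ is fixed, the Euler product converges absolutely and $\zeta(s)$ has no zeros in the half-plane $\{\operatorname{Re}s>1\}$; in particular $f_\sigma$ is continuous, non-vanishing and satisfies $f_\sigma(0)=1$, so it admits a unique continuous logarithm vanishing at $t=0$. My first step would be to expand each Euler factor via the power series $-\log(1-z)=\sum_{r=1}^\infty z^r/r$ (valid for $|z|<1$) with $z=p^{-s}$, yielding
\begin{equation*}
\log\zeta(s)=\sum_{p\in\mathbb{P}}\sum_{r=1}^\infty\frac{p^{-rs}}{r},\qquad \operatorname{Re}s>1.
\end{equation*}

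Next I would verify that this double series converges absolutely when $\operatorname{Re}s=\sigma>1$, because $\sum_{p}\sum_{r}p^{-r\sigma}/r=\log\zeta(\sigma)<\infty$; this absolute convergence is precisely what licenses the rearrangements to follow. Substituting $s=\sigma+\ii t$ and $s=\sigma$ and subtracting term by term, using $p^{-\ii rt}=\e^{-\ii rt\log p}$, gives
\begin{equation*}
\log f_\sigma(t)=\log\zeta(\sigma+\ii t)-\log\zeta(\sigma)=\sum_{p\in\mathbb{P}}\sum_{r=1}^\infty\frac{p^{-r\sigma}}{r}\big(\e^{-\ii rt\log p}-1\big).
\end{equation*}
I would then confirm that the right-hand side is continuous in $t$ and vanishes at $t=0$, so that it genuinely represents the distinguished logarithm of $f_\sigma$, with no spurious additive multiple of $2\pi\ii$.

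The second equality is merely a rewriting of this sum as an integral: with $N_\sigma=\sum_{p}\sum_{r}(p^{-r\sigma}/r)\,\delta_{r\log p}$, substituting the atom $x=r\log p$ produces $\int_0^\infty(\e^{-\ii tx}-1)\,N_\sigma(dx)$. To see that $N_\sigma$ is a finite L\'evy measure I would note that every atom sits at $r\log p>0$ (as $p\ge 2$), whence $N_\sigma(\{0\})=0$, while the total mass equals $\sum_{p}\sum_{r}p^{-r\sigma}/r=\log\zeta(\sigma)<\infty$; finiteness then forces $\int_{\mathbb{R}}\min\{|x|^2,1\}\,N_\sigma(dx)\le N_\sigma(\mathbb{R})<\infty$, so condition \eqref{Levy measure} holds. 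Finally, to conclude that $\mu_\sigma$ is compound Poisson, I would set $\lambda:=\log\zeta(\sigma)$ and let $\rho$ be the reflection through the origin of the normalized measure $N_\sigma/\lambda$, so that $\rho(\{0\})=0$ and $\log f_\sigma(t)=\lambda\big(\widehat{\rho}(t)-1\big)$; this is exactly the defining form of a compound Poisson characteristic function, consistent with the atom $\mu_\sigma(\{0\})=1/\zeta(\sigma)=\e^{-\lambda}$.

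The only genuinely delicate point is the interchange of the logarithm with the double summation together with the branch bookkeeping: one must ensure that the closed expression obtained is the continuous logarithm normalized at $t=0$, rather than a determination of $\log\zeta(\sigma+\ii t)-\log\zeta(\sigma)$ differing by a locally constant integer multiple of $2\pi\ii$. I expect to resolve this by taking the absolutely convergent series $\sum_{p}\sum_{r}p^{-rs}/r$ as the \emph{definition} of $\log\zeta$ on $\{\operatorname{Re}s>1\}$, which is continuous there and exponentiates to $\zeta$; once this is fixed, the remaining steps are routine.
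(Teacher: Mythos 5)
Your proof is correct. The paper itself states this proposition without proof, deferring to the citation \cite{GK}; your argument --- taking the absolutely convergent series $\sum_{p}\sum_{r}p^{-rs}/r$ as the definition of $\log\zeta$ on $\{\mathrm{Re}\,s>1\}$, subtracting termwise to get the distinguished logarithm of $f_\sigma$, checking $N_\sigma(\{0\})=0$ and $N_\sigma(\mathbb{R})=\log\zeta(\sigma)<\infty$, and identifying $\lambda=\log\zeta(\sigma)$ with $\rho$ the reflection of $N_\sigma/\lambda$ --- is the standard route and coincides with the method the paper itself uses when proving the analogous statement for finite Euler products in Theorem \ref{New-zeta-1}.
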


We can also explain Equation \eqref{zeta CF} in terms of random variables. 
Fix $\sigma>1$. 
We define an $\mathbb{R}$-valued random variable $Y_\sigma$ given by
$$
\mathbf{P}(Y_\sigma=-r\log p)=\frac{1}{N_\sigma(\mathbb{R})}\cdot\frac{p^{-r\sigma}}{r}, 
\qquad r \in \mathbb{N}, \, p \in \mathbb{P}. 
$$
Let $Y_\sigma^1, Y_\sigma^2, \dots$ be independent copies of $Y_\sigma$ and 
$K_\sigma$ be a Poisson random variable with a parameter 
$N_\sigma(\mathbb{R})>0$ independent of $Y_\sigma$. 
Then we see that the probability distribution of a random variable 
$Z_\sigma:=Y_\sigma^1+Y_\sigma^2+\cdots+Y_\sigma^{K_\sigma}$ 
coincides with $\mu_\sigma$ given by \eqref{Riemann zeta dist}. 


\subsection{Multidimensional zeta functions in probabilistic view}

As mentioned in the previous subsection, 
the Riemann zeta distribution is one of the examples of 
discrete probability distributions on $\mathbb{R}$
with infinitely many mass points. 
However, only a few examples of such discrete probability distributions are known explicitly
when we look at higher dimension cases. 
In order to treat such discrete distributions extensively, 
{\it multidimensional Shintani zeta functions} are introduced in view of 
series representations 
and investigated some useful relations with probability theory in \cite{AN2}.

For $\vec{c} \in \mathbb{R}^d$ and 
$\vs \in \mathbb{C}^d$, we write $\langle \vec{c} ,\vs \rangle := \langle \vec{c} ,\vec{\sigma}\rangle + 
{\rm i}\langle\vec{c} ,\vt \rangle$, 
where $\vec{\sigma}, \vt \in \mathbb{R}^d$ 
and $\vs =\vec{\sigma} +{\rm i}\vt$. 

\begin{df}[multidimensional Shintani zeta function, cf.\,\cite{AN2}]
\label{Def:Shintani zeta}
Let $d, m, r \in \mathbb{N}$, $\vs \in \mathbb{C}^d$ and 
$(n_1, n_2, \dots, n_r) \in \mathbb{Z}_{\ge 0}^r$. 
For $\lambda_{\ell j}, u_j>0$, $\vec{c}_\ell \in \mathbb{R}^d$, where 
$j=1, 2, \dots, r$ and $\ell=1, 2, \dots, m$, 
and a $\mathbb{C}$-valued function
$\theta(n_1, n_2, \dots, n_r)$ satisfying
$|\theta(n_1, n_2, \dots, n_r)|=O\big((n_1+n_2+\cdots+n_r)^\ve\big)$, for any $\ve>0$,  
we define a multidimensional Shintani zeta function $Z_S(\vs)$ by
\begin{equation}\label{Shintani zeta}
Z_S(\vs):=\sum_{n_1, n_2, \dots, n_r=0}^\infty \frac{\theta(n_1, n_2, \dots, n_r)}
{\prod_{\ell=1}^{m}\big( \sum_{j=1}^r \lambda_{\ell j}(n_j+u_j)\big)^{\la \vec{c}_\ell, \vs \ra}}.
\end{equation}
\end{df}

We write $\mathcal{Z}_S^{(d)}$ for the class consisting of all $d$-dimensional
Shintani zeta functions of the form \eqref{Shintani zeta}.  
Put 
$\vs:=\vec{\sigma} +{\rm i}\vt, \, \vec{\sigma}, \vt \in \mathbb{R}^d.$
The absolute convergence of $Z_S (\vs)$ is given as follows:

\begin{pr}[cf.\,{\cite[Theorem 1]{AN2}}]
\label{Shintani zeta absolutely convergence}
The series defined by 
$\eqref{Shintani zeta}$ converges absolutely in the region 
$\min \{\mathrm{Re}\,\langle \vec{c}_{\ell}, \vs\rangle |\,
\ell=1, 2, \dots m\}>r/m$. 
\end{pr}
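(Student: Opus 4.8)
The plan is to bound the series of absolute values term by term and then collapse the $r$-fold sum into a single convergent Dirichlet-type series. Write $\vec{n}=(n_1,\dots,n_r)$, $|\vec{n}|:=n_1+\cdots+n_r$, and set $L_\ell(\vec{n}):=\sum_{j=1}^r\lambda_{\ell j}(n_j+u_j)$ for $\ell=1,\dots,m$. Since all $\lambda_{\ell j},u_j>0$, each base $L_\ell(\vec{n})$ is a positive real number, so the identity $|a^{w}|=a^{\mathrm{Re}\,w}$ (valid for $a>0$, $w\in\mathbb{C}$), together with $\mathrm{Re}\,\la\vec{c}_\ell,\vs\ra=\la\vec{c}_\ell,\vec{\sigma}\ra$, gives
\[
\Big|\frac{\theta(\vec{n})}{\prod_{\ell=1}^m L_\ell(\vec{n})^{\la\vec{c}_\ell,\vs\ra}}\Big|=\frac{|\theta(\vec{n})|}{\prod_{\ell=1}^m L_\ell(\vec{n})^{\la\vec{c}_\ell,\vec{\sigma}\ra}}.
\]
Thus absolute convergence of $Z_S(\vs)$ reduces to convergence of the right-hand side summed over $\vec{n}\in\mathbb{Z}_{\ge0}^r$, an expression depending on $\vs$ only through $\vec{\sigma}$.

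Next I would produce a lower bound for the denominator. Because all the coefficients are positive, there are constants $a_\ell>0$ with $L_\ell(\vec{n})\ge a_\ell(|\vec{n}|+1)$ for every $\vec{n}$; one may take $a_\ell=\min\{\min_j\lambda_{\ell j},\ \sum_j\lambda_{\ell j}u_j\}$. Put $\sigma_0:=\min_{1\le\ell\le m}\la\vec{c}_\ell,\vec{\sigma}\ra$, so the hypothesis reads $\sigma_0>r/m$, and in particular $\sigma_0>0$ and $\la\vec{c}_\ell,\vec{\sigma}\ra\ge\sigma_0>0$ for each $\ell$. Since $x\mapsto x^{\la\vec{c}_\ell,\vec{\sigma}\ra}$ is increasing and $|\vec{n}|+1\ge1$, raising to a larger exponent only enlarges a base that is $\ge1$, whence
\[
\prod_{\ell=1}^m L_\ell(\vec{n})^{\la\vec{c}_\ell,\vec{\sigma}\ra}\ge\Big(\prod_{\ell=1}^m a_\ell^{\la\vec{c}_\ell,\vec{\sigma}\ra}\Big)(|\vec{n}|+1)^{m\sigma_0}=:C_1\,(|\vec{n}|+1)^{m\sigma_0},
\]
with $C_1>0$ depending only on $\vec{\sigma}$ and the data.

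Combining this with the growth hypothesis $|\theta(\vec{n})|=O(|\vec{n}|^\ve)$, valid for every $\ve>0$, each term of the absolute series is bounded by a constant times $(|\vec{n}|+1)^{\ve-m\sigma_0}$. I would then group the lattice points by the value $k=|\vec{n}|$: the number of $\vec{n}\in\mathbb{Z}_{\ge0}^r$ with $|\vec{n}|=k$ is $\binom{k+r-1}{r-1}=O(k^{r-1})$, so the $r$-fold sum collapses to
\[
\sum_{\vec{n}\in\mathbb{Z}_{\ge0}^r}(|\vec{n}|+1)^{\ve-m\sigma_0}\le C\sum_{k=0}^\infty(k+1)^{\,r-1+\ve-m\sigma_0}.
\]
This one-dimensional series converges exactly when $r-1+\ve-m\sigma_0<-1$, i.e.\ $\sigma_0>(r+\ve)/m$. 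Because $\sigma_0>r/m$ strictly, I can fix $\ve\in(0,\,m\sigma_0-r)$ at the outset before invoking the growth bound, and absolute convergence follows.

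The only genuinely delicate point is the interaction between the exponent $\ve$ contributed by $\theta$ and the counting exponent $r-1$: one must commit to $\ve<m\sigma_0-r$ from the start, which is possible precisely because the admissible region is given by a \emph{strict} open inequality. Everything else—the modulus identity, the affine lower bound $L_\ell(\vec{n})\ge a_\ell(|\vec{n}|+1)$, and the lattice-point count—is routine.
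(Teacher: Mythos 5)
Your proof is correct and follows essentially the same route as the paper's own argument (which appears in Section 3 for the crystal-lattice version of this proposition): take absolute values using positivity of the linear forms, fix $\ve$ in advance with $\ve<m\sigma_0-r$, bound $|\theta(\vec{n})|\le C_\ve(|\vec{n}|+1)^{\ve}$ and each linear form below by a constant multiple of $|\vec{n}|+1$, so that the series is dominated by a constant times $\sum_{\vec{n}\in\mathbb{Z}_{\ge 0}^r}(|\vec{n}|+1)^{\ve-m\sigma_0}$. The only difference is the final summation step — you count the $O(k^{r-1})$ lattice points on each shell $|\vec{n}|=k$ and reduce to a one-dimensional series, while the paper compares with an $r$-fold integral — and your explicit tracking of the exponent $m\sigma_0$ (the product of $m$ factors, each with exponent at least $\sigma_0$) is in fact cleaner than the paper's corresponding quantity $w$.
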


Let $\theta(n_1, n_2, \dots, n_r), \, 
(n_1, n_2, \dots, n_r) \in  \mathbb{Z}^r_{\ge 0}$,
be a nonnegative or nonpositive definite function and $\vec{\sigma} \in \mathbb{R}^d$ a vector satisfying
$\min\{\mathrm{Re}\,\langle \vec{c}_{\ell}, \vec{\sigma}\rangle|\,\ell=1, 2, \dots, m\} >r/m$. 
Then, 
as an extension of the Riemann zeta case, 
it is shown in \cite[Theorem 3]{AN2} that Shintani zeta functions also generate characteristic functions of probability distributions on $\mathbb{R}^h, 1\le h\le d,$
by putting 
$$
f_{\vsg}(\vt):=\frac{Z_S(\vsg+\ii \vt)}{Z_S(\vsg)}, \qquad \vt \in \mathbb{R}^d. 
$$
However, there exists no effective methods to know their infinite divisibilities. 
As to show that, 
they introduced a new multidimensional polynomial  
Euler products in \cite{AN3}


\begin{df}[multidimensional polynomial Euler product, 
cf.\,\cite{AN3}]
Let $d, \, m \in \mathbb{N}$ and $\vs \in \mathbb{C}^d$. 
For $-1 \le \alpha_{\ell}(p) \le 1$ and 
$\va_{\ell} \in \mathbb{R}^d$, 
$\ell=1, 2, \dots, m$ and $p \in \mathbb{P}$, 
we define a $d$-dimensional polynomial Euler product $Z_E(\vs)$ given by
\begin{equation}\label{ZE}
Z_E(\vs):=\prod_{p \in \mathbb{P}}\prod_{\ell=1}^{m} \big( 1 - \alpha_{\ell}(p)
p^{-\la \va_{\ell}, \vs \ra}\big)^{-1}.
\end{equation}
\end{df}

We write $\mathcal{Z}_E^{(d)}$ for the class consisting of all $d$-dimensional polynomial
Euler product of the form \eqref{ZE}. 
Note that  
the multidimensional polynomial Euler products were generalized to complex coefficients cases in \cite{Nakamura}. 
We can see that the product \eqref{ZE} also converges absolutely.

\begin{pr}[cf.\,{\cite[Theorem 2.3]{AN3}}]
$Z_E(\vs)$ converges absolutely and has no zeros in the region $\min\{\mathrm{Re}\, \langle \va_\ell,\vs\rangle|\,\ell=1, 2, \dots, m\} >1$. 
\end{pr}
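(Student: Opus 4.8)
The plan is to pass to logarithms and thereby reduce both assertions---absolute convergence and non-vanishing---to the absolute convergence of a single triple series, which I control by comparison with $\zeta$. Throughout, fix $\vs=\vsg+\ii\vt$ lying in the region, and set $\sigma_0:=\min\{\mathrm{Re}\,\la \va_\ell,\vs\ra \mid \ell=1,2,\dots,m\}>1$. Since each $\va_\ell\in\mathbb{R}^d$, the convention $\la \va_\ell,\vs\ra=\la\va_\ell,\vsg\ra+\ii\la\va_\ell,\vt\ra$ gives $\mathrm{Re}\,\la\va_\ell,\vs\ra=\la\va_\ell,\vsg\ra$, so that $|p^{-\la\va_\ell,\vs\ra}|=p^{-\la\va_\ell,\vsg\ra}$.

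First I would check that no factor meets the singularity of $(1-z)^{-1}$ and that the principal logarithm expands. Because $|\alpha_\ell(p)|\le 1$ and $p\ge 2$,
$$
\big|\alpha_\ell(p)\,p^{-\la\va_\ell,\vs\ra}\big|\le p^{-\la\va_\ell,\vsg\ra}\le p^{-\sigma_0}\le 2^{-\sigma_0}<1
$$
uniformly in $p$ and $\ell$. In particular $1-\alpha_\ell(p)p^{-\la\va_\ell,\vs\ra}\neq 0$, so every factor is a finite nonzero complex number, and the expansion
$$
-\log\big(1-\alpha_\ell(p)\,p^{-\la\va_\ell,\vs\ra}\big)=\sum_{r=1}^\infty \frac{\alpha_\ell(p)^r\,p^{-r\la\va_\ell,\vs\ra}}{r}
$$
is valid for the principal branch.

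Next I would bound the total logarithmic mass. Taking absolute values and summing over $r$, $\ell$ and $p$, and using $|\alpha_\ell(p)|\le 1$ and $p^{-\la\va_\ell,\vsg\ra}\le p^{-\sigma_0}$, I obtain
$$
\sum_{p\in\mathbb{P}}\sum_{\ell=1}^m\sum_{r=1}^\infty \frac{|\alpha_\ell(p)|^r\,p^{-r\la\va_\ell,\vsg\ra}}{r}\le m\sum_{p\in\mathbb{P}}\sum_{r=1}^\infty \frac{p^{-r\sigma_0}}{r}=m\sum_{p\in\mathbb{P}}\big(-\log(1-p^{-\sigma_0})\big)=m\log\zeta(\sigma_0),
$$
the last identity being Euler's product formula \eqref{Euler}; this is finite since $\sigma_0>1$. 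As all summands are nonnegative, Tonelli's theorem for series legitimizes every rearrangement of the triple index $(p,\ell,r)$. Hence the series $\sum_{p,\ell}\big(-\log(1-\alpha_\ell(p)p^{-\la\va_\ell,\vs\ra})\big)$ converges absolutely to a finite value $L(\vs)$.

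Finally I would conclude. Absolute convergence of this series of logarithms is equivalent to absolute convergence of the product \eqref{ZE}, and since $Z_E(\vs)=\e^{L(\vs)}$ with $L(\vs)$ finite, the value $Z_E(\vs)$ is nonzero. For the refinement to uniform (indeed normal) convergence on compact subsets of the region, I would observe that on any compact set $\sigma_0$ is bounded below by some $\sigma_1>1$, whence the estimate above improves to the uniform bound $m\log\zeta(\sigma_1)$. I do not expect a serious obstacle here; the only points demanding care are the uniform strict inequality $|\alpha_\ell(p)p^{-\la\va_\ell,\vs\ra}|<1$ that licenses the logarithmic expansion, and the appeal to Tonelli to interchange the summations over $(p,\ell,r)$.
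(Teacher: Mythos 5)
Your proposal is correct: the uniform bound $|\alpha_\ell(p)p^{-\la\va_\ell,\vs\ra}|\le 2^{-\sigma_0}<1$, the logarithmic expansion, and the comparison of the triple sum with $m\log\zeta(\sigma_0)$ together give both absolute convergence and non-vanishing via $Z_E(\vs)=\e^{L(\vs)}$. Note that the paper itself supplies no proof of this proposition (it is quoted from \cite{AN3}), and your argument is the standard one that the cited reference follows, so there is nothing to contrast.
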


Let $\vec{\sigma} \in \mathbb{R}^d$ be a vector satisfying
$\min\{\mathrm{Re}\, \langle \va_\ell,\vsg\rangle|\,\ell=1, 2, \dots, m\} >1$. 
We now put 
$$
f_{\vsg}(\vt):=\frac{Z_E(\vsg+\ii \vt)}{Z_E(\vsg)}, \qquad \vt \in \mathbb{R}^d.
$$
Then we may expect that the function $f_{\vsg}(\vt)$ is also to be a characteristic function
as well as the case of Shintani zeta functions. 
However, it is not trivial that $f_{\vsg}(\vt)$ to be a characteristic function of some probability distribution on $\mathbb{R}^d$. 
Therefore, it is natural to ask when $f_{\vsg}(\vt)$ to be so. 
To find that out, the following two conditions for $\mathbb{R}^d$-valued vectors 
$\va_{\ell}, \, \ell=1, 2, \dots, m$,  were introduced in \cite{AN3}. 

\begin{itemize}
\item[{\bf (LI)}:] $\va_{\ell }, \, \ell=1, 2, \dots, m$, are {\it linearly independent}.
\vspace{2mm}
\item[{\bf (LR)}:] $\va_{\ell }, \, \ell=1, 2, \dots, m$, are linearly dependent but {\it linearly independent over the rationals}. 
Namely, for some $\va \in \mathbb{R}^d$, 
it holds that $\va_{\ell }=\psi_{\ell }\va, \, \ell=1, 2, \dots, m$, where each $\psi_{\ell }$
are algebraic real numbers and linearly independent over the rationals.
\end{itemize}
Under either of these two conditions, a necessary and sufficient condition for 
$f_{\vsg}$ to be a characteristic function was obtained 
as in the following.

\begin{pr}[cf.\,{\cite[Theorem 3.8]{AN3}}] \label{AN-zeta}
Suppose that 
$\va_{\ell}, \, \ell=1, 2, \dots, m$, 
in {\rm (\ref{ZE})} satisfy either {\bf (LI)} or {\bf (LR)}. 
Then $f_{\vsg}(\vt)$ is a 
characteristic function if and only if 
$\alpha_{\ell}(p) \geq 0$
for all $\ell=1, 2, \dots, m$ and  $p \in \mathbb{P}$. 
Moreover, $f_{\vsg}(\vt)$ is a compound Poisson characteristic function with
its finite L\'evy measure $N_{\vsg}(d\vx)$ 
on $\mathbb{R}^d$ given by
$$
N_{\vsg}(d\vx)=\sum_{p \in \mathbb{P}}
\sum_{r=1}^\infty \sum_{\ell=1}^m
 \frac{1}{r}\alpha_{\ell}(p)^r p^{-r\la \va_{\ell}, \vsg\ra}
 \delta_{(r\log p) \va_{\ell}}(d\vx).
$$
\end{pr}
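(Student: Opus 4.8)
The plan is to prove the two implications separately, reducing everything to the Dirichlet series expansion of the product \eqref{ZE} and to the compound Poisson form furnished by Proposition~\ref{LK-formula}. For the ``if'' direction, assume $\alpha_\ell(p)\ge 0$ for all $\ell,p$. I would take the principal logarithm of \eqref{ZE} and apply $-\log(1-x)=\sum_{r\ge1}x^r/r$, which is legitimate on the region $\min_\ell\mathrm{Re}\,\la\va_\ell,\vs\ra>1$ where $Z_E$ converges absolutely and is zero-free. Writing $\vs=\vsg+\ii\vt$ and $p^{-\la\va_\ell,\vs\ra}=p^{-\la\va_\ell,\vsg\ra}\e^{-\ii(\log p)\la\va_\ell,\vt\ra}$, forming $\log f_{\vsg}(\vt)=\log Z_E(\vsg+\ii\vt)-\log Z_E(\vsg)$ cancels the $\vt$-independent terms and leaves
\[
\log f_{\vsg}(\vt)=\sum_{p\in\mathbb P}\sum_{r=1}^\infty\sum_{\ell=1}^m\frac1r\,\alpha_\ell(p)^r\,p^{-r\la\va_\ell,\vsg\ra}\bigl(\e^{-\ii(r\log p)\la\va_\ell,\vt\ra}-1\bigr),
\]
which is exactly the compound Poisson exponent $\int_{\mathbb R^d}(\e^{-\ii\la\vt,\vx\ra}-1)\,N_{\vsg}(d\vx)$ for the stated $N_{\vsg}$, in the same orientation as the Riemann case \eqref{zeta CF}. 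Each coefficient is nonnegative because $\alpha_\ell(p)\ge0$, and the total mass equals $\log Z_E(\vsg)<\infty$; hence $N_{\vsg}$ is a finite nonnegative measure and Proposition~\ref{LK-formula} (with $A=0$) identifies $f_{\vsg}$ as a compound Poisson characteristic function with Lévy measure $N_{\vsg}$. This settles both the ``if'' part and the ``moreover'' claim simultaneously.

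For necessity I would argue by reducing to one dimension and reading off atom masses. The expansion above is valid regardless of the signs of $\alpha_\ell(p)$, so $f_{\vsg}=\widehat\mu$ already forces $\log\widehat\mu(\vt)=\int_{\mathbb R^d}(\e^{-\ii\la\vt,\vx\ra}-1)N_{\vsg}(d\vx)$ with $N_{\vsg}$ now a finite \emph{signed} measure. Under (LI), linear independence lets me pick dual vectors $\vb_\ell$ with $\la\va_j,\vb_\ell\ra=\delta_{\ell j}$ and restrict to $\vt=s\vb_\ell$, $s\in\mathbb R$; every factor with $j\ne\ell$ becomes independent of $s$ and cancels against $Z_E(\vsg)$, leaving
\[
f_{\vsg}(s\vb_\ell)=\frac{\prod_{p}\bigl(1-\alpha_\ell(p)p^{-\la\va_\ell,\vsg\ra}p^{-\ii s}\bigr)^{-1}}{\prod_{p}\bigl(1-\alpha_\ell(p)p^{-\la\va_\ell,\vsg\ra}\bigr)^{-1}}.
\]
The numerator is a single Euler product with multiplicative Dirichlet coefficients $a_N^{(\ell)}=\prod_p\alpha_\ell(p)^{v_p(N)}p^{-v_p(N)\la\va_\ell,\vsg\ra}$, where $v_p(N)$ is the exponent of $p$ in $N$; writing $Z_\ell$ for the denominator gives $f_{\vsg}(s\vb_\ell)=\sum_{N\ge1}(a_N^{(\ell)}/Z_\ell)\e^{-\ii s\log N}$. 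This is the characteristic function of the one-dimensional pushforward of $\mu$ under $\vx\mapsto\la\vb_\ell,\vx\ra$, so every mass $a_N^{(\ell)}/Z_\ell$ is nonnegative; taking $N=p$ yields $\alpha_\ell(p)p^{-\la\va_\ell,\vsg\ra}\ge0$, i.e. $\alpha_\ell(p)\ge0$, for all $\ell$ and $p$.

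Under (LR) the vectors are parallel, $\va_\ell=\psi_\ell\va$, so no projection separates them; instead every atom $(r\log p)\va_\ell$ lies on the line $\mathbb R\va$ at scalar height $r\psi_\ell\log p$, and I would push $\mu$ forward along $\va$ to obtain a one-dimensional probability measure. To extract $\alpha_{\ell_0}(p_0)$ I would isolate the atom at height $\psi_{\ell_0}\log p_0$ by showing it admits only the trivial representation as an $\mathbb N$-combination $\sum_i r_i\psi_{\ell_i}\log p_i$: for a fixed $\ell$ this is unique factorization of integers, while a cross-index coincidence produces a relation $\sum_\ell\psi_\ell\log q_\ell=0$ with $q_\ell\in\mathbb Q_{>0}$ and $\psi_\ell$ algebraic. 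Ruling this out is the hard part and the real crux of the proposition: it requires Baker's theorem on linear forms in logarithms, invoked precisely because the $\psi_\ell$ are algebraic, together with their $\mathbb Q$-linear independence built into (LR). Once uniqueness of representation is in hand, the pushforward mass at $\psi_{\ell_0}\log p_0$ equals a positive multiple of $\alpha_{\ell_0}(p_0)p_0^{-\la\va_{\ell_0},\vsg\ra}$, and its nonnegativity completes the argument.
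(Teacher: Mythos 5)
Your proof is correct, but its necessity half follows a genuinely different route from the one the paper relies on. Note first that the paper never proves Proposition \ref{AN-zeta} itself: it is imported from \cite{AN3}, and the only proof of this type carried out in the paper is that of the finite-product analogue, Theorem \ref{New-zeta-1}, whose necessity direction (Lemma \ref{Key Lemma}) picks frequencies $\omega_\ell=\la\va_\ell,\vt_0\ra$ linearly independent over the rationals and uses Kronecker's approximation theorem to produce a point where $|f_{\vsg}|>1$; this is also the strategy of \cite{AN3}, where under {\bf (LR)} Baker's theorem is invoked to secure the $\mathbb{Q}$-linear independence of the frequencies before Kronecker is applied. Your sufficiency argument (logarithmic expansion, nonnegative finite L\'evy measure of total mass $\log Z_E(\vsg)$, Proposition \ref{LK-formula}) coincides with the paper's. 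For necessity under {\bf (LI)}, however, you avoid Kronecker altogether: projecting onto dual vectors $\vb_\ell$ reduces $f_{\vsg}$ to a single one-variable Euler product, whose Dirichlet coefficients (the multiplicative expansion is exactly the paper's Proposition \ref{Steuding's prop} with $q=1$) must be the atom masses of the pushforward law by uniqueness of Fourier--Stieltjes transforms, hence nonnegative; this is more elementary than the approximation argument and identifies the offending negative mass explicitly, at the cost of uniformity, since under {\bf (LR)} no projection separates the parallel vectors and you must instead isolate the atom at $\psi_{\ell_0}\log p_0$ inside the compound Poisson convolution series, which, as you correctly say, needs Baker's theorem just as in \cite{AN3}. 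To make that half complete you should record two routine steps: first, $\mu=\e^{-\lambda}\sum_{k\ge 0}N^{*k}/k!$ as an identity of finite signed measures (both sides have the same transform and the series converges in total variation), so that unique representability gives $\mu(\{\psi_{\ell_0}\log p_0\,\va\})=\e^{-\lambda}\alpha_{\ell_0}(p_0)p_0^{-\la\va_{\ell_0},\vsg\ra}$ up to orientation; second, the Baker step itself, namely that a nontrivial representation yields $\sum_p\beta_p\log p=0$ with each $\beta_p$ a nonzero-free integer combination of the $\psi_\ell$ (hence algebraic), which Baker's theorem forbids because the $\log p$ are linearly independent over $\mathbb{Q}$, after which the $\mathbb{Q}$-linear independence of the $\psi_\ell$ forces all integer coefficients to vanish. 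With these filled in, your argument stands as a valid alternative to the Kronecker-based proof.
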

\noindent

\subsection{The purpose of the paper}

Random walks on graphs are one of the first contacts in graph theory and probability theory.
We often regard some good, periodic and so on, graphs as crystal lattices in Euclidean space by
a transformation called a realization.
By adopting simple random walks, which are allowed to move to one of the nearest neighbors 
at each steps, we have rich mathematical observations among geometry and graph theory. 
Some probabilistic limit theorems such as the central limit theorem and the large deviation principle
are shown in some cases and they let us know how nice, symmetric and so on, the graphs are.
See e.g., \cite{Sunada} and \cite{Woess} for details of such developments with extensive references therein. 

Though simple random walks are good enough for us to study comprehensive graphs, 
we need more general random walks to understand properties of complicated ones.
There were some difficulties to do such things for the lack of existences of 
treatable multivariate functions which are the Fourier transformations (i.e., 
characteristic functions) of multidimensional discrete distributions with finite and 
especially infinite supports.
To break the walls, Aoyama and Nakamura have introduced some new multiple series and infinite 
products which are called multidimensional Shintani zeta functions and 
multidimensional polynomial Euler products as mentioned above, respectively.
They made it useful for us to describe some linearly or periodically supported discrete distributions
in Euclidean spaces by applying analytic number theory.
In the present paper, we try to give a new contact of graphs and general random walks by 
using their multiple zeta functions to break some walls left.

Here, we should mention that there are some existing studies 
on zeta functions on graphs. 
The study of zeta functions on graphs dates back to 1960s. 
It was originated in \cite{Ihara} considering a $p$-adic analogue of Selberg zeta functions, 
which are now called Ihara zeta functions.
The Ihara zeta function is defined by 
a kind of the Euler product on the set of so-called
``prime cycles'' in finite graphs. 
See e.g., \cite{KS000} for the definition and some properties of the function. 
It is known that there are explicit relations
with the Riemann and some generalized zeta functions appeared in number theory. 
Several interesting aspects of the Ihara zeta function related to e.g. spectral geometry on finite graphs and random matrix theory
with extensive references can be found in \cite{Terras}.  
Some authors have considered extensions of Ihara zeta functions to some
infinite graph cases 
by applying operator-algebraic approaches. 
See e.g., \cite{GIL} and \cite{LPS} for more details
and references therein. 

On the other hand, we here emphasize that 
such Ihara zeta functions may not be useful
when we try to capture probabilistic objects 
such as random walks and even treatable probability distributions
on infinite graphs. 
In fact, it is difficult to capture random 
trajectories on graphs via the Ihara zeta function,
since each factor of the function is given in terms of a certain coset of ``cycles'' in the graphs.
Furthermore, the study of Ihara zeta functions has been developed mainly in the context of geometry of graphs. 
On the contrary, the multiple zeta functions introduced in the present paper  are completely different from Ihara zeta functions.
In view of some known observations among classical zeta functions and probabilistic objects, we believe that 
more fruitful contributions to random walks on graphs 
can be obtained by making use of such multiple zeta functions.

The rest of the present paper is organized as follows: 
We review some basic terminologies from graph theory and introduce
notions of crystal lattices and their periodic realizations into 
some Euclidean spaces in Section 2. 
We introduce multidimensional Shintani zeta functions on crystal lattices in Section 3. 
We see that some finite range random walks on crystal lattices are defined in terms of the multidimensional Shintani zeta functions.
As a counterpart of Section 3, we consider 
multidimensional polynomial Euler products on crystal lattices 
and study a certain subclass of them in Section 4.
We also see that some infinite range random walks on crystal lattices are defined in terms of the 
multidimensional ``finite'' Euler products. 
Several comprehensible examples of crystal lattices 
of dimension 1 and 2 and multiple zeta functions on them 
are given as well in Section 5.

\section{{\bf Crystal lattices and their periodic realizations}}

\subsection{Covering graphs}
There exist a lot of classes of infinite graphs which possess geometric 
features such as periodicities, volume growths and so on. 
Crystal lattices are known as one of the most typical classes of periodic graphs
and have been well-studied from geometric perspectives. 
Such graphs are regarded as discrete analogues of covering spaces of compact manifolds. 
In particular, their periodicities are clearly described in terms of covering transformation groups.   
For more details, we refer to \cite{KS06}
and \cite{Sunada}.

Let $X=(V, E)$ be an oriented and connected graph, 
where $V$ is the set of all vertices and $E$ is the set of all oriented edges.  
For an oriented edge $e \in E$, 
we denote by $o(e)$ and $t(e)$ the {\it origin} and the {\it terminus} of $e$, respectively. 
The {\it inverse edge} of $e \in E$ is  an edge $\ol{e} \in E$ satisfying $o(\ol{e})=t(e)$
and $t(\ol{e})=o(e)$. 
Note that our graphs possibly have {\it loops} ($e \in E$ with $o(e)=t(e)$)
and {\it multiple edges} ($e_1, e_2 \in E$ with $e_1 \neq e_2$, $o(e_1)=o(e_2)$ and $t(e_1)=t(e_2)$). 
A {\it path} $c$ in $X$ of length $n$ is a sequence $c=(e_1, e_2, \dots, e_n)$ of $n$ edges 
$e_1, e_2, \dots, e_n \in E$ with $o(e_{i+1})=t(e_i)$ for $i=1, 2, \dots, n-1$.
We denote by $\Omega_{x, n}(X)$ 
the set of all paths in $X$ of length $n \in \mathbb{N} \cup \{\infty\}$ starting from $x \in V$. 

We write $E_x$ for the set of all edges whose origin is $x \in V$, 
that is, 
$$
E_x=\{e \in E \, | \, o(e)=x\}, \qquad
x \in V. 
$$
Throughout the present paper, we always consider 
{\it locally finite} graph, that is, 
$\deg(x):=|E_x|<\infty$ for $x \in V.$

The group of all automorphisms on a set $M$ is denoted by $\mathrm{Aut}(M)$. 
The notion of group actions on graphs is stated as in the following.

\begin{df}[group action]
{\rm (1)} We say that {\it a group $G$ acts on a graph $X=(V, E)$} if two
homomorphisms $\phi_V : G \to \mathrm{Aut}(V)$ and $\phi_E : G \to \mathrm{Aut}(E)$
are given and they satisfy that 
$$
o\big(\phi_E(g)e\big)=\phi_V(g)o(e), \qquad t\big(\phi_E(g)e\big)=\phi_V(g)t(e)
$$
and there is no edges with $\phi_E(g)e=\ol{e}$ for $g \in G$. 
We denote the actions of $g \in G$ on $x \in V$ and $e \in E$
by $gx$ and $ge$, respectively. 

\vspace{2mm}
\noindent
{\rm (2)} 
We say that a group $G$ acts on $X$ {\it freely}  if
$gx=hx$ implies $g=h$ for every $x \in V$. 
\end{df}

Let $X=(V, E)$ be a graph and suppose that a group $G$ acts on $X$ freely. 
We put $V_0:=G \backslash V$ and $E_0:=G \backslash E$, the orbits for the $G$-action. 
Then we can find a unique graph structure $X_0:=(V_0, E_0)$ with a 
 canonical projections $\pi_V : V \to V_0$ and $\pi_E : E \to E_0$. 
The graph $X_0$ is called a {\it quotient graph} of $X$ by the $G$-action
and it is denoted by $G \backslash X$. 

We here introduce the notion of covering graphs as a geometric analogue
of Galois theory in abstract algebra. 
Let $X_0=(V_0, E_0)$ be a connected and finite graph. 
Then the concept of covering map is defined as follows:

\begin{df}[covering map and covering graph]
A morphism $\pi : X \to X_0$ is said to be a {\it covering map} if
$\pi : V \to V_0$ is surjective and the restriction map
$\pi|_{E_x} : E_x \to (E_{0})_{\pi(x)}$ is bijective for every $x \in V$. 
We call $X$ a covering graph of $X_0$. 
\end{df}

Namely, the covering map $\pi$ preserves the information of local 
relations between vertices and edges. 
The following is the definition of covering transformation group 
corresponding to covering graphs. 

\begin{df}[covering transformation group]
Let $X$ be a covering graph of a finite graph $X_0$ and $\pi : X \to X_0$ a covering map.
An automorphism $\gamma$ of $X$ is called a covering transformation if $\pi \circ \gamma=\gamma$. 
The set of all covering transformations of $X$ forms a group $\Gamma$ under the
composition of maps, which is called a covering transformation group of $X$. 
\end{df}
\noindent

It is easily seen that the covering transformation group $\Gamma$ acts on 
the graph $X$ {\it freely}. Namely, if there is a vertex $x \in V$ satisfying
$\gamma x=x$, then $\gamma=\bm{1}_{\Gamma}$ follows, where 
$\bm{1}_{\Gamma}$ stands for the unit of $\Gamma$. 
Moreover, we assume that the covering map $\pi : X \to X_0$
is {\it regular} throughout the present paper, that is, 
the free action of $\Gamma$ on every fiber $\pi^{-1}(x), \, x \in V,$ is transitive. 
We then know that the quotient graph $\Gamma \backslash X$ is isomorphic to $X_0$,
so that the canonical surjection $\pi : X \to \Gamma \backslash X$ is a regular covering map whose covering transformation group is $\Gamma$. 

Our focus of interest is the following.

\begin{df}[crystal lattice]
An infinite covering graph $X = (V, E)$  is called a crystal lattice
if the covering transformation group $\Gamma$ is finitely generated and abelian. 
\end{df}

We occasionally call $d:=\mathrm{rank}\,\Gamma$ the dimension of $X$, which is denoted by $\dim X$. 
We also assume that 
$\Gamma$ 
has {\it no torsions}. 
Namely, if $\gamma \in \Gamma$ satisfies $\gamma^n=\bm{1}_\Gamma$ for some
$n \in \mathbb{N}$, then $\gamma=\bm{1}_\Gamma$. 
In this case, we have $\Gamma \cong \mathbb{Z}^d$ for some   integer $d \geq 1$.

\subsection{Homology groups of finite graphs}

The notion of homology groups allows us to treat topological spaces in 
an algebraic point of view. It has been well-studied intensively and extensively
by both algebraists and geometers. Generally speaking, 
it is  difficult to compute homology groups. However, 
graph cases are known to be much easier ones to compute them. 
The definition and several properties of homology groups of finite graphs
are given in this subsection. 

Let $X_0=(V_0, E_0)$ be a finite graph.  
We define the 0-{\it chain group} and 1-{\it chain group} of $X_0$ by
$$
C_0(X_0, \mathbb{Z}):=\Big\{ \sum_{x \in V_0}a_x x \, \Big| \, a_x \in \mathbb{Z}\Big\},
\qquad C_1(X_0, \mathbb{Z}):=
\Big\{ \sum_{e \in E_0}a_e e \, \Big| \, a_e \in \mathbb{Z}, \, \ol{e}=-e\Big\},
$$
respectively. 
The boundary operator $\del : C_1(X_0, \mathbb{Z}) \to C_0(X_0, \mathbb{Z})$ 
is defined by the homomorphism satisfying $\del(e)=t(e)-o(e)$ for $e \in E_0$.
Note that $\del(\ol{e})=-\del(e)$ for $e \in E_0$ due to $\ol{e}=-e$. 
Then   the first homology group is defined as in the following. 

\begin{df}[first homology group]
The first homology group of $X_0$ is defined by 
$$
\h_1(X_0, \mathbb{Z}):=\Ker(\del) \subset C_1(X_0, \mathbb{Z}).
$$ 
\end{df}

\noindent
An element of $\h_1(X_0, \mathbb{Z})$ is usually called a 1-{\it cycle}. 
By definition, we see that $\h_1(X_0, \mathbb{Z})$ is a free abelian group of finite rank. 
We call 
$$
b_1(X_0):=\mathrm{rank}\,\h_1(X_0, \mathbb{Z})
$$ 
the {\it first Betti number} of $X_0$, which indicates the number of ``holes'' of $X_0$. 
Indeed, we easily obtain 
$
b_1(X_0)=|E_0|/2-|V_0|+1. 
$

There is an important relation between $\h_1(X_0, \mathbb{Z})$ and closed 
paths in $X_0$.  
For a path $c=(e_1, e_2, \dots, e_n)$ in $X_0$, 
we denote by $\la c \ra$ the 1-chain $e_1+e_2+\cdots+e_n$. 
Then it is readily verified that $\del(\la c \ra)=0$ if $c$ is a closed path, 
which means that $ \la c \ra \in \h_1(X_0, \mathbb{Z})$.  
Conversely, every 1-cycle $\alpha \in \h_1(X_0, \mathbb{Z})$ 
is represented by a closed path (see \cite[page 40]{Sunada}). 

\if 
The notion of fundamental groups is also well-known 
as another algebraic tool to treat topological spaces. 
Fundamental groups are usually far from abelian groups, while
homology groups are always abelian. 
However, the notion also gives us rich information about holes of topological
spaces. 

To define it, we start with homotopy classes of paths in a graph $X=(V, E)$.
Let $x, y \in V$. We write 
$\mathcal{C}(x, y)$ for the set of all paths $c$ with $o(c)=x$ and $t(c)=y$. 
It is easily seen that $\mathcal{C}(x, x)$ means the set of loops with base point $x$. 
Two paths $c_1$ and $c_2$ in $\mathcal{C}(x, y)$ are said to be {\it homotopic},
say $c_1 \sim c_2$, 
if there exists a finite sequence of paths 
$\{c^{(1)}, c^{(2)}, \dots, c^{(k)}\} \subset \mathcal{C}(x, y)$
such that $c_1=c^{(1)}$, $c_2=c^{(k)}$ and each $c^{(i+1)}$ is obtained by removing
 {\it back-tracking edges} of $c^{(i)}$ for $i=1, 2, \dots, k-1$. 
 It is clear that the relation $\sim$ is an equivalence relation on $\mathcal{C}(x, y)$.
Equivalence classes of $\sim$ are called {\it homotopy classes}. 
Then the definition of fundamental groups is given as follows:

\begin{df}
{\rm (fundamental group)}
Let $X=(V, E)$ be a graph and fix $x \in V$. We put
$\pi_1(X, x):=\mathcal{C}(x, x)/\sim. $
We define multiplication of two homotopy classes $[c_1]$ and $[c_2]$
by $[c_1]*[c_2]:=[c_1 \cdot c_2]$. Then $(\pi_1(X, x), *)$ becomes a group, 
which is called the fundamental group of $X$ with base point $x$. 
\end{df}

\noindent
We note that the fundamental group $\pi_1(X, x)$ is independent of the choice 
of the base point $x \in V$. Therefore, we may write $\pi_1(X)$ instead of $\pi_1(X, x)$. 

Let us consider a relation between fundamental groups of covering graphs and
homology groups. 
Let $X$ be a crystal lattice with a finite graph $X_0$ whose covering transformation group is $\Gamma \cong \mathbb{Z}^d$. Then there exists a 
canonical surjective homomorphism $\rho : \pi_1(X) \to \Gamma$. 
The abelianization of $\rho$ gives a surjective homomorphism 
$
\rho^{\text{ab}} : \pi_1(X)/[\pi_1(X), \pi_1(X)] \cong \h_1(X_0, \mathbb{Z}) \to \Gamma. 
$
This map and its linearization 
$$
\rho_{\mathbb{R}} : \h_1(X_0, \mathbb{Z}) \otimes \mathbb{R} \cong \h_1(X_0, \mathbb{R}) \to \Gamma \otimes \mathbb{R} \cong \mathbb{R}^d
$$
plays an important role in considering periodic realizations of abelian covering graphs 
into $\mathbb{R}^d$ in next subsection.
Before closing this subsection,  the notion of the maximality of 
abelian covering graphs is given. 
\fi 

\begin{df}[maximal abelian covering graph] 
A crystal lattice $X$ of a finite graph $X_0$ is said to be
maximal if the covering transformation group $\Gamma \cong \mathbb{Z}^d$ is isomorphic 
to $\h_1(X_0, \mathbb{Z})$.
\end{df}

We can also say that a crystal lattice $X$ of a finite graph $X_0$ is maximal 
if and only if $\dim X=b_1(X_0)$. 


\subsection{Periodic realizations of crystal lattices}

The notion of periodic realizations of crystal lattices plays a crucial role in 
investigating the natural configurations of crystals into a Euclidean space (see e.g., \cite{KS00}).
Let $X=(V, E)$ be a $d$-dimensional crystal lattice. 
The graph $X$ is identified with a 1-dimensional cell complex in the following manner. 
We take the disjoint union $V \cup (E \times [0, 1])$ and introduce 
the equivalence relation $\sim$ defined by
$o(e) \sim (e, 0)$, $t(e)\sim(e, 1)$ and $(e, t) \sim (\ol{e}, 1-t)$ for $0 \leq t \leq 1$.
Then $X$ is regarded as a cell complex $\big(V \cup (E \times [0, 1]\big)/\sim$
and vertices and unoriented edges are identified with 0-cells and 1-cells, respectively.  
A map $\Phi : X \to \Gamma \otimes \mathbb{R}\cong \mathbb{R}^d$ 
is said to be {\it piecewise linear} if the restriction
$(\Phi|_e) : [0, 1] \to \mathbb{R}^d$ is linear for $e \in E$ and
$(\Phi|_{\ol{e}})(t)=(\Phi|_e)(1-t)$ for $0 \leq t \leq 1$.

\begin{df}[periodic realization]
Let $X=(V, E)$ be a $d$-dimensional crystal lattice.  
A piecewise linear map 
$\Phi : X \to \Gamma \otimes \mathbb{R} \cong \mathbb{R}^d$ is called a 
{\it periodic realization} of $X$  if $\Phi$ satisfies
$$
\Phi(\gamma x)=\Phi(x) + \gamma, \qquad \gamma \in \Gamma, \,\, x \in V,
$$
where $\gamma \in \Gamma$ is identified with $\gamma \otimes 1 \in \Gamma \otimes \mathbb{R}$. 
\end{df} 

Let $\Phi : X \to \Gamma \otimes \mathbb{R}$ a periodic realization of $X$. 
We put 
\begin{equation}\label{dphi}
d\Phi(e):=\Phi\big(t(e)\big) - \Phi\big(o(e)\big), \qquad e \in E.
\end{equation}
Since $d\Phi : E \to \Gamma \otimes \mathbb{R}$
satisfies that 
$d\Phi(\gamma x)=d\Phi(x)$
for $\gamma \in \Gamma$ and $x \in V$, 
it gives rise to an $\mathbb{R}^d$-valued map on $E_0$
with $d\Phi(\ol{e})=-d\Phi(e)$ for $e \in E_0$. 
If we choose a base point $x_* \in V$ and  $\Phi(x_*)$ is fixed, 
then the image $\Phi(X)$ is completely determined by using
the set of vectors $\{d\Phi(e)\}_{e \in E_0}$. 
In this sense, we call  $\{d\Phi(e)\}_{e \in E_0}$ a {\it building block} 
of $\Phi$. 
Conversely, for a given set of vectors 
$\{\va(e)\}_{e \in E_0}$ with $\va(\ol{e})=-\va(e)$ for $e \in E_0$,
we can find a periodic realization $\Phi$ such that 
$\{\va(e)\}_{e \in E_0}$ forms a building block of $\Phi$. 

A periodic realization $\Phi$ of a $d$-dimensional crystal lattice $X$
is said to be {\it non-degenerate} if $\Phi : V \to \mathbb{R}^d$ is injective, 
$d\Phi(e) \neq \bm{0}$ for $e \in E$ and the map
$$
E_x \ni e \longmapsto \frac{d\Phi(e)}{|d\Phi(e)|} \in \mathbb{S}^{d-1}
:=\{\vx \in \mathbb{R}^d \, : \, |\vx|=1\}
$$
is injective for all $x \in V$. This means that edges having the same origin 
never overlap. Otherwise, it is said to be {\it degenerate}. 
Several comprehensible examples of crystal lattices with their non-degenerate 
periodic realizations for $d=1, 2$ are discussed in Section \ref{Sec:Example}.

\section{{\bf Random walks on crystal lattices of finite supports}}

We fix  $d \in \mathbb{N}$. 
Let $X=(V, E)$ be a $d$-dimensional crystal lattice of a finite graph $X_0=(V_0, E_0)$
with an abelian covering transformation group $\Gamma=\mathbb{Z}^d$. 
We write $\mathcal{S}=\{ \gamma_1, \gamma_2, \dots,  \gamma_d\}$ for a set of generators of $\Gamma$. 
We take a non-degenerate periodic realization $\Phi : X \to \Gamma \otimes \mathbb{R} \cong \mathbb{R}^d$
and fix a base point $x_* \in V$ satisfying $\Phi(x_*)=\bm{0}$. 
We also call $x_*$ the {\it origin} of $X$.

\subsection{Multidimensional Shintani zeta functions on crystal lattices}

The aim of this subsection is to define an analogue of 
multidimensional Shintani zeta functions on crystal lattices
and to investigate relations with probability theory. 

As to state the definition, we need a few notations. 
For a vertex $x \in V_0$, we write $\widetilde{x} \in V$ for a lift of $x$ to $V$, that is, an element of the fiber 
$\pi_V^{-1}(x) \subset V$. 
Then we define a set of vectors $\mathcal{J}(\Phi; x)$ by
\begin{equation}\label{jump set Shintani}
\mathcal{J}(\Phi; x):=\Big\{ k\va \,\Big| \, k \in \mathbb{R}, \, \va= \sum_{k=1}^n d\Phi(e_k), \,
c=(e_1, e_2, \dots, e_n) \in \Omega_{\widetilde{x}, n}(X), \, n \in \mathbb{N}\Big\}, 
\end{equation}
where $d\Phi : E \to \mathbb{R}^d$ is defined by \eqref{dphi}. 
Note that $\mathcal{J}(\Phi; x)$ does not depend on the choice of a lift of $x \in V_0$
Then we define the multidimensional Shintani zeta functions on $X$ having a dependence 
of a choice of a vertex in $V_0$, which is a major difference from those defined in \cite{AN2}. 

\begin{df}[multidimensional Shintani zeta function on $X$]
Let $m, r \in \mathbb{N}$, $x \in V_0$, $\vs \in \mathbb{C}^d$ and 
$(n_1, n_2, \dots, n_r) \in \mathbb{Z}_{\ge 0}^r$. 
For $\lambda_{\ell j} \ge 0$
with $\lambda_{\ell j_0} \neq 0$ for some $j_0$, 
$u_j>0$, $\vec{c}_\ell \in \mathcal{J}(\Phi, x)$, where 
$j=1, 2, \dots, r$ and $\ell=1, 2, \dots, m$, 
and a $\mathbb{C}$-valued function
$\theta(n_1, n_2, \dots, n_r)$ satisfying
$|\theta(n_1, n_2, \dots, n_r)|=O\big((n_1+n_2+\cdots+n_r)^\ve\big)$, for any $\ve>0$,  
we define a multidimensional Shintani zeta function $Z_S^{X, \Phi}(x, \vs)$ associated with $x \in V_0$ by
\begin{equation}\label{New Shintani zeta}
Z_S^{X, \Phi}(x, \vs):=\sum_{n_1, n_2, \dots, n_r=0}^\infty \frac{\theta(n_1, n_2, \dots, n_r)}
{\prod_{\ell=1}^{m}\big(\sum_{j=1}^r \lambda_{\ell j}(n_j+u_j)\big)^{\la \vec{c}_\ell, \vs \ra}}, 
\end{equation}
\end{df}

Note that our assumption for $\{\lambda_{\ell j}\}$ 
in the definition above is strictly weaker than that of Definition \ref{Def:Shintani zeta}, which makes it possible to  
represent various kinds of random variables we give afterwards. 
We write $\mathcal{Z}_S^{X, \Phi}(x)$ for the class consisting of all $d$-dimensional
Shintani zeta functions of the form \eqref{New Shintani zeta} associated with $x \in V_0$.  
We put $\vs=\vsg+\ii\vt, \, \vsg, \vt \in \mathbb{R}^d$. 
Under a suitable condition on $\{\lambda_{\ell j}\}$ 
in \eqref{New Shintani zeta}, 
We can show the absolute convergence of 
$Z_S^{X, \Phi}(x, \vs)$ as in the following. 

\begin{pr}
Suppose that $\lambda_{\ell j}>0$ for $\ell=1, 2, \dots, m$ and 
$j=1, 2, \dots, r$ as in \eqref{New Shintani zeta}.
For a fixed $x \in V_0$, the series defined by \eqref{New Shintani zeta} converges absolutely in the region 
$\min\{\mathrm{Re}\,\la\vec{c}_\ell, \vs\ra|\,\ell=1, 2, \dots, m\}>r/m.$
\end{pr}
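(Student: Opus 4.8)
The plan is to observe first that the strict positivity hypothesis collapses the new definition back onto the old one. Indeed, when $\lambda_{\ell j}>0$ for all $\ell$ and $j$, the function $Z_S^{X,\Phi}(x,\vs)$ is literally a member of the class $\mathcal{Z}_S^{(d)}$ of Definition \ref{Def:Shintani zeta}: the only formal discrepancy is that here the vectors are required to satisfy $\vec{c}_\ell \in \mathcal{J}(\Phi,x)$ rather than merely $\vec{c}_\ell \in \mathbb{R}^d$, and since $\mathcal{J}(\Phi,x)\subseteq\mathbb{R}^d$ this extra constraint is harmless. Consequently the statement is an immediate instance of Proposition \ref{Shintani zeta absolutely convergence}. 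For the sake of self-containedness I would nonetheless reprove it directly, since the argument is short and isolates exactly where the hypothesis $\lambda_{\ell j}>0$ is used.

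First I would reduce to a series of nonnegative terms. Because each base $\sum_{j=1}^r \lambda_{\ell j}(n_j+u_j)$ is a strictly positive real number, we have $\big|\big(\sum_{j}\lambda_{\ell j}(n_j+u_j)\big)^{\la\vec{c}_\ell,\vs\ra}\big|=\big(\sum_{j}\lambda_{\ell j}(n_j+u_j)\big)^{\mathrm{Re}\,\la\vec{c}_\ell,\vs\ra}$. Writing $\sigma_\ell:=\mathrm{Re}\,\la\vec{c}_\ell,\vs\ra$ and $\sigma_{\min}:=\min_{\ell}\sigma_\ell>r/m$, absolute convergence of \eqref{New Shintani zeta} is therefore equivalent to convergence of the nonnegative series obtained by replacing $\theta$ with $|\theta|$ and each exponent by $\sigma_\ell$.

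Next I would set up a uniform lower bound for the denominator. Put $\lambda:=\min_{\ell,j}\lambda_{\ell j}>0$, $U:=\sum_{j=1}^r u_j>0$, and $N:=n_1+\cdots+n_r$. Then $\sum_{j}\lambda_{\ell j}(n_j+u_j)\ge \lambda(N+U)$ for every $\ell$, so for all $N$ large enough that $\lambda(N+U)\ge 1$ one gets, using $\sigma_\ell\ge\sigma_{\min}>0$ and monotonicity of $t\mapsto t^{\sigma}$ on $[1,\infty)$,
\[
\prod_{\ell=1}^{m}\Big(\sum_{j=1}^r\lambda_{\ell j}(n_j+u_j)\Big)^{\sigma_\ell}\ \ge\ \big(\lambda(N+U)\big)^{m\sigma_{\min}}.
\]
Combining this with the growth bound $|\theta(n_1,\dots,n_r)|=O(N^\ve)$ for any $\ve>0$ shows the generic summand is $O\big(N^{\ve-m\sigma_{\min}}\big)$. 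Finally I would group the multi-indices by the value of $N$: the number of $(n_1,\dots,n_r)\in\mathbb{Z}_{\ge 0}^r$ with fixed $N$ is $\binom{N+r-1}{r-1}=O(N^{r-1})$, so the whole series is dominated by a constant times $\sum_{N}N^{\,r-1+\ve-m\sigma_{\min}}$.

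The conclusion then rests on the exponent count, which is also where the only real subtlety lies. Since $\sigma_{\min}>r/m$ gives $m\sigma_{\min}>r$ with strict inequality, I may fix $\ve>0$ small enough that $m\sigma_{\min}>r+\ve$; then $r-1+\ve-m\sigma_{\min}<-1$ and the comparison series converges, which yields absolute convergence. The point I expect to require the most care is precisely this margin argument: the strictness of $\sigma_{\min}>r/m$ is exactly what leaves room to absorb both the $\ve$-growth of $\theta$ and the polynomial lattice-point count $N^{r-1}$, so one must be careful to choose $\ve$ after $\sigma_{\min}$ is fixed. I would also emphasize that the assumption $\lambda_{\ell j}>0$ (strictly stronger than the standing assumption $\lambda_{\ell j}\ge 0$ of the definition) is used only to produce the clean lower bound $\sum_j\lambda_{\ell j}(n_j+u_j)\ge\lambda(N+U)$; this is the structural reason the proposition imposes it.
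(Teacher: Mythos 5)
Your proof is correct, but it takes a genuinely different route from the paper's. Your opening observation --- that under the hypothesis $\lambda_{\ell j}>0$ the function $Z_S^{X, \Phi}(x, \vec{s})$ is literally an element of $\mathcal{Z}_S^{(d)}$, so the claim is an immediate instance of Proposition \ref{Shintani zeta absolutely convergence} --- is a legitimate one-line reduction that the paper does not make: the paper instead reproves convergence from scratch, in effect re-deriving the quoted result of \cite{AN2} so as to stay self-contained. Your self-contained argument then shares its first two steps with the paper's (reduce to a positive series via $|t^{z}|=t^{\mathrm{Re}\,z}$ for $t>0$, and bound each denominator factor below by a multiple of $n_1+\cdots+n_r+\mathrm{const}$ using $\lambda:=\min_{\ell, j}\lambda_{\ell j}>0$), but the two diverge in how the $r$-fold sum is controlled: you group multi-indices by $N=n_1+\cdots+n_r$ and invoke the lattice-point count $\binom{N+r-1}{r-1}=O(N^{r-1})$, reducing everything to the one-dimensional series $\sum_N N^{r-1+\varepsilon-m\sigma_{\min}}$, whereas the paper compares the $r$-fold sum with the $r$-fold integral $\int_0^\infty\cdots\int_0^\infty(x_1+\cdots+x_r+ru)^{\varepsilon-w}\,dx_1\cdots dx_r$ and evaluates it explicitly, producing the constant $\{(w-\varepsilon-1)(w-\varepsilon-2)\cdots(w-\varepsilon-r)\}^{-1}$. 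Your route is more elementary (no multivariate integral test) and its exponent bookkeeping is transparent, while the paper's integral comparison yields an explicit quantitative bound; both hinge on precisely the margin you isolate, namely $m\sigma_{\min}-r>0$ with $\varepsilon$ chosen afterwards. Incidentally, your explicit treatment of the base-$\geq 1$ issue when lowering all exponents to $\sigma_{\min}$ (together with the finitely many exceptional small-$N$ terms) is more careful than the paper's, whose quantity $w$, stated as $\min_{\ell}\la\vec{c}_\ell, \vec{\sigma}\ra>r$, in fact plays the role of your aggregate exponent $m\sigma_{\min}$.
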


\begin{proof}
We put 
$$
w=w(x):=\min_{\substack{\ell=1, 2, \dots, m}}\la\vec{c}_\ell, \vsg\ra>r, \quad 
\lambda:=\min_{\substack{\ell=1, 2, \dots, m 
\\ j=1, 2, \dots, r}}\lambda_{\ell j}>0, \quad 
u:=\min_{j=1, 2, \dots, r}u_j>0. 
$$
Then we have
$$
\Big(\sum_{j=1}^r \lambda_{\ell j}
(n_j+u_j)\Big)^{-1}
\leq \lambda^{-1}\Big(\sum_{j=1}^r n_j+ru\Big)^{-1}, \qquad \ell=1, 2, \dots, m.
$$
Hence, for any $0<\ve<w-r$, there exists a 
sufficiently large $C_\ve>0$ such that
\begin{align*}
    &\sum_{n_1, n_2, \dots, n_r=0}^\infty \Bigg|\frac{\theta(n_1, n_2, \dots, n_r)}
    {\prod_{\ell=1}^{m}\big(\sum_{j=1}^r \lambda_{\ell j}(n_j+u_j)\big)^{\la \vec{c}_\ell, \vs \ra}}\Bigg|\\
    &\le \sum_{n_1, n_2, \ldots, n_r =0}^{\infty}
    \frac{C_{\ve}\big(\sum_{j=1}^r n_j+ru)^{\ve}\lambda^{-w}}{\prod_{\ell=1}^m
    \big(\sum_{j=1}^r n_j+ru\big)
    ^{\langle \vc_\ell,\vsg \rangle}}\\
    &=C_{\ve}\sum_{n_1, n_2, \ldots, n_r =0}^{\infty}
    \frac{\lambda^{-w}}{\big(\sum_{j=1}^r n_j+ru\big)
    ^{-\ve+w}} \\
    &\le C_{\ve}\lambda^{-w}\left((ru)
    ^{\ve-w} 
    +\int_0^{\infty}\cdots\int_0^{\infty} \frac{dx_1 \cdots dx_r}
    {(x_1+\cdots+x_r+ru)^{-\ve+w}}\right)\\
    &\le C_{\ve}\lambda^{-w}
    \left((ru)^{\ve-w} 
    +C(ru)^{\ve -w +r}\right)<\infty,
\end{align*}
where
\begin{equation*}
C:=\big\{(w-\ve-1)(w-\ve-2)\cdots(w-\ve-r)\big\}^{-1}>0.
\end{equation*}
Thus, $Z_S^{X, \Phi}(x, \vs)$ converges absolutely in the region $\min\{\mathrm{Re}\,\la\vec{c}_\ell, \vs\ra|\,\ell=1, 2, \dots, m\}>r/m.$
This completes the proof. 
\end{proof}


\subsection{Multidimensional Shintani zeta distributions on crystal lattices}

We define a multidimensional 
Shintani zeta random variable taking values in $X$.  
In the following, let $\theta(n_1, n_2, \dots, n_r), \, 
(n_1, n_2, \dots, n_r) \in  \mathbb{Z}^r_{\ge 0}$,
be a nonnegative or nonpositive definite function. 
We write $\vec{c}_\ell=(c_{\ell 1}, c_{\ell 2}, \dots, c_{\ell d}) \in \mathbb{R}^d$ 
for $\ell=1, 2, \dots, m$.

\begin{df}[multidimensional Shintani zeta distribution on $X$]
We fix $x \in V_0$ and 
$\vsg=\vsg(x) \in \mathbb{R}^d \setminus \{\bm{0}\}$. 
An $\mathbb{R}^d$-valued random variable 
$\mathcal{Y}_{\vsg}=\mathcal{Y}_{\vsg(x)}$ is called a multidimensional Shintani zeta random variable associated with $x \in V_0$ if  
\begin{align}
&{\bf{P}}\Bigg(\mathcal{Y}_{\vsg}=
\Bigg(-\sum_{\ell=1}^m c_{\ell 1}\log\Big(\sum_{j=1}^r \lambda_{\ell j}(n_j+u_j)\Big), \dots, 
-\sum_{\ell=1}^m c_{\ell d}\log\Big(\sum_{j=1}^r \lambda_{\ell j}(n_j+u_j)\Big)\Bigg)\Bigg)\nn\\
&=\frac{\theta(n_1, n_2, \dots, n_r)}{Z_S^{X, \Phi}(x, \vsg)}\prod_{\ell=1}^m
\Big(\sum_{j=1}^r \lambda_{\ell j}(n_j+u_j)\Big)^{-\la \vec{c}_\ell, \vsg\ra} \label{Shintani zeta dist}
\end{align}
for $(n_1, n_2, \dots, n_r) \in \mathbb{Z}_{\ge 0}^r$. 
In particular, a $V$-valued random variable $\mathcal{X}_{\vsg}$ 
is called a multidimensional Shintani zeta random variable on $X$ associated with $x \in V_0$ when $\mathcal{Y}_{\vsg}:=\Phi(\mathcal{X}_{\vsg})$ is given by \eqref{Shintani zeta dist}
and $\mathcal{Y}_{\vsg} \in \Phi(V)$. 
\end{df}
We can easily verify that the above distribution is 
actually a probability distribution on $\mathbb{R}^d$ since 
the right-hand side of \eqref{Shintani zeta dist} is non-negative and 
the sum of the right-hand side of \eqref{Shintani zeta dist} over 
all $(n_1, n_2, \dots, n_r) \in \mathbb{Z}_{\ge 0}^r$ is equal to one. We note that, if $\lambda_{\ell j}>0$ for $\ell=1, 2, \dots, m$ and $j=1, 2, \dots, r$, then we can choose $\vsg(x)$ as an element
in the region $\min\{\la\vec{c}_\ell, \vsg\ra|\,\ell=1, 2, \dots, m\}>r/m.$

Then the characteristic function of each $\mathcal{Y}_{\vsg}$ is obtained as the desired form. 

\begin{tm}
Let $\mathcal{Y}_{\vsg}=\mathcal{Y}_{\vsg(x)}$ be a 
multidimensional Shintani zeta random variable associated with $x \in V_0$. 
Then the characteristic function $f_{\vsg}^{X, \Phi}(x, \cdot)$ 
of $\mathcal{Y}_{\vsg}$ is given by
$$
f_{\vsg}^{X, \Phi}(x, \vt)=\frac{Z_S^{X, \Phi}(x, \vsg+\ii \vt)}{Z_S^{X, \Phi}(x, \vsg)}, \qquad
\vt \in \mathbb{R}^d.  
$$
\end{tm}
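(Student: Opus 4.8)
The plan is to compute the characteristic function straight from its definition as an expectation, $f_{\vsg}^{X,\Phi}(x,\vt)=\mathbf{E}\big[\e^{\ii\la\vt,\mathcal{Y}_{\vsg}\ra}\big]$, and then to recognise the resulting series as the announced ratio of Shintani zeta functions. Abbreviating $L_\ell=L_\ell(n_1,\dots,n_r):=\sum_{j=1}^r\lambda_{\ell j}(n_j+u_j)>0$, the support point of $\mathcal{Y}_{\vsg}$ indexed by $(n_1,\dots,n_r)$ has $k$-th coordinate $-\sum_{\ell=1}^m c_{\ell k}\log L_\ell$, so expanding the inner product coordinate by coordinate and interchanging the two finite sums over $k$ and $\ell$ gives $\la\vt,\mathcal{Y}_{\vsg}\ra=-\sum_{\ell=1}^m\la\vec{c}_\ell,\vt\ra\log L_\ell$. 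Exponentiating turns the oscillatory factor into a product, $\e^{\ii\la\vt,\mathcal{Y}_{\vsg}\ra}=\prod_{\ell=1}^m L_\ell^{-\ii\la\vec{c}_\ell,\vt\ra}$.

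First I would substitute this, together with the probability weight $Z_S^{X,\Phi}(x,\vsg)^{-1}\theta(n_1,\dots,n_r)\prod_\ell L_\ell^{-\la\vec{c}_\ell,\vsg\ra}$ from \eqref{Shintani zeta dist}, into the defining sum of the expectation. The two products over $\ell$ then combine into $\prod_{\ell=1}^m L_\ell^{-\la\vec{c}_\ell,\vsg\ra-\ii\la\vec{c}_\ell,\vt\ra}$, and here the key bookkeeping step is to invoke the bilinear-extension convention $\la\vec{c}_\ell,\vs\ra=\la\vec{c}_\ell,\vsg\ra+\ii\la\vec{c}_\ell,\vt\ra$ with $\vs=\vsg+\ii\vt$, which collapses each exponent to $-\la\vec{c}_\ell,\vsg+\ii\vt\ra$. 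Pulling the normalising constant $Z_S^{X,\Phi}(x,\vsg)^{-1}$ out of the sum, what remains is precisely $\sum_{n_1,\dots,n_r=0}^\infty\theta(n_1,\dots,n_r)\prod_{\ell=1}^m L_\ell^{-\la\vec{c}_\ell,\vsg+\ii\vt\ra}$, which is the defining series \eqref{New Shintani zeta} evaluated at $\vs=\vsg+\ii\vt$, i.e. $Z_S^{X,\Phi}(x,\vsg+\ii\vt)$. This yields the claimed identity.

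The only point demanding care is the legitimacy of the term-by-term manipulations, which needs absolute convergence of the numerator series. Since $\vsg$ is chosen so that $Z_S^{X,\Phi}(x,\vsg)$ converges absolutely (as is required for $\mathcal{Y}_{\vsg}$ to be a well-defined distribution), I would observe that $L_\ell>0$ gives $|L_\ell^{-\la\vec{c}_\ell,\vsg+\ii\vt\ra}|=L_\ell^{-\mathrm{Re}\la\vec{c}_\ell,\vsg+\ii\vt\ra}=L_\ell^{-\la\vec{c}_\ell,\vsg\ra}$, using that $\la\vec{c}_\ell,\vsg\ra$ and $\la\vec{c}_\ell,\vt\ra$ are real. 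Hence the modulus of the $(n_1,\dots,n_r)$-term of the numerator series coincides with the corresponding term of the $Z_S^{X,\Phi}(x,\vsg)$ series, whose absolute convergence then justifies the rearrangement (equivalently, $|\e^{\ii\la\vt,\mathcal{Y}_{\vsg}\ra}|=1$ makes the expectation a convergent sum bounded by the total mass $1$). I do not expect a genuine obstacle: the argument is essentially index bookkeeping, and the only delicate steps are the coordinate-wise expansion of $\la\vt,\mathcal{Y}_{\vsg}\ra$ and the consistent use of the convention for $\la\vec{c}_\ell,\vs\ra$.
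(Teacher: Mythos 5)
Your proposal is correct and follows essentially the same route as the paper's proof: the paper also computes $\mathbf{E}\big[\e^{\ii\la\vt,\mathcal{Y}_{\vsg}\ra}\big]$ directly from the mass function \eqref{Shintani zeta dist} and recognises the resulting series as $Z_S^{X,\Phi}(x,\vsg+\ii\vt)/Z_S^{X,\Phi}(x,\vsg)$. The only difference is that you spell out the intermediate bookkeeping (the coordinate-wise expansion of $\la\vt,\mathcal{Y}_{\vsg}\ra$, the convention $\la\vec{c}_\ell,\vs\ra=\la\vec{c}_\ell,\vsg\ra+\ii\la\vec{c}_\ell,\vt\ra$, and the absolute-convergence justification) which the paper leaves implicit as ``straightforward.''
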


\begin{proof} 
The proof is straightforward. For any $\vt \in \mathbb{R}^d$, we obtain
$$
\begin{aligned}
f_{\vsg}^{X, \Phi}(x, \vt)&=\sum_{n_1, n_2, \dots, n_r=0}^\infty
\e^{\ii \la \vt, \mathcal{Y}_{\vsg}\ra}
\frac{\theta(n_1, n_2, \dots, n_r)}{Z_S^{X, \Phi}(x, \vsg)}\prod_{\ell=1}^m
\Big(\sum_{j=1}^r \lambda_{\ell j}(n_j+u_j)\Big)^{-\la \vec{c}_\ell, \vsg\ra}\\
&=\frac{Z_S^{X, \Phi}(x, \vsg+\ii \vt)}{Z_S^{X, \Phi}(x, \vsg)}.
\end{aligned}
$$
This completes the proof. 
\end{proof}

\subsection{Random walks generated by multidimensional Shintani zeta functions }

We define random walks on crystal lattices whose range is finite in this subsection.
In particular, such random walks have been well-studied and some limit theorems 
for them such as central limit theorems and large deviation principles have established.
See e.g., \cite{KS06} and \cite{IKK}. 
However, we should note that only nearest-neighbor random walks are discussed among the papers. 

In this subsection, we define a class of finite range 
random walks on crystal lattices generated by 
multidimensional Shintani zeta functions,
which includes various kinds of random walks 
regardless of whether they admit nearest-neighbor jumps or not. 
The following theorem tells us that the usual random variables which represent the 
each step of random walks can be written by a multidimensional Shintani
zeta ones. 
\begin{tm}
Let $\va_1, \va_2, \dots, \va_m \in \mathcal{J}(\Phi; x)$ for some $x \in V_0$. 
We define a $V$-valued random variable $\mathcal{X}$ by
$$
{\bf{P}}\big(\Phi(\mathcal{X})=\va_\ell\big)=\beta_\ell, \qquad \ell=1, 2, \dots, m,
$$
where $\beta_\ell, \, \ell=1, 2, \dots, m$, are nonnegative real number satisfying 
$\beta_1+\beta_2+\cdots+\beta_m=1$. 
Then $\mathcal{X}$ is a multidimensional Shintani zeta random variable on $X$
associated with $x \in V_0$.  
\end{tm}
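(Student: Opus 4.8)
The plan is to reverse-engineer all the parameters appearing in \eqref{New Shintani zeta} and \eqref{Shintani zeta dist} so that the resulting Shintani zeta distribution has atoms exactly at $\va_1,\dots,\va_m$ carrying the prescribed weights $\beta_1,\dots,\beta_m$. The one structural feature I would exploit is that, by \eqref{jump set Shintani}, the set $\mathcal{J}(\Phi;x)$ is closed under multiplication by arbitrary real scalars: every element has the form $k\va$ with $k\in\mathbb{R}$, so $t\va_\ell\in\mathcal{J}(\Phi;x)$ for all $t\in\mathbb{R}$ whenever $\va_\ell\in\mathcal{J}(\Phi;x)$. This rescaling freedom is exactly what repairs the naive (and, as explained below, obstructed) choice $\vec{c}_\ell=\va_\ell$.

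Concretely, I would take $r=m$, keep the number of factors in \eqref{New Shintani zeta} equal to $m$ as well, and set $\lambda_{\ell j}=\delta_{\ell j}$ (so that $\lambda_{\ell j}\ge 0$ with $\lambda_{\ell\ell}\neq 0$ holds), $u_j=1>0$, and $\vec{c}_\ell:=-(1/\log 2)\va_\ell\in\mathcal{J}(\Phi;x)$. Then $\sum_{j=1}^r\lambda_{\ell j}(n_j+u_j)=n_\ell+1$, and evaluating at the standard basis multi-index $\vec{e}_k\in\mathbb{Z}_{\ge 0}^{\,r}$, with $(\vec{e}_k)_j=\delta_{jk}$, gives $n_\ell+1=1+\delta_{\ell k}$. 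The off-diagonal logarithms vanish since $\log 1=0$, while the single diagonal factor $\log 2$ is absorbed by the scalar $-1/\log 2$; a one-line computation then shows that the value vector in \eqref{Shintani zeta dist} at $\vec{e}_k$ equals $-\sum_{\ell=1}^m\vec{c}_\ell\log(1+\delta_{\ell k})=\va_k$. I would finally concentrate $\theta$ on $\{\vec{e}_1,\dots,\vec{e}_m\}$, putting $\theta(\vec{e}_k)=\beta_k\e^{-\la\va_k,\vsg\ra}\ge 0$ and $\theta\equiv 0$ elsewhere; because $\theta$ has finite support the series \eqref{New Shintani zeta} is a finite sum, hence converges for every fixed $\vsg\in\mathbb{R}^d\setminus\{\bm{0}\}$ and no convergence region has to be invoked.

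It then remains only to match the weights. Writing $P_k:=\prod_{\ell=1}^m(1+\delta_{\ell k})^{-\la\vec{c}_\ell,\vsg\ra}=2^{-\la\vec{c}_k,\vsg\ra}=\e^{\la\va_k,\vsg\ra}$, the normalizing constant is $Z_S^{X,\Phi}(x,\vsg)=\sum_{k=1}^m\theta(\vec{e}_k)P_k=\sum_{k=1}^m\beta_k=1$, so \eqref{Shintani zeta dist} yields $\Prob(\mathcal{Y}_{\vsg}=\va_k)=\theta(\vec{e}_k)P_k/Z_S^{X,\Phi}(x,\vsg)=\beta_k$ for each $k$. Since the atoms $\va_k=\Phi(\mathcal{X})$ lie in $\Phi(V)$, we have $\mathcal{Y}_{\vsg}=\Phi(\mathcal{X})\in\Phi(V)$, and therefore $\mathcal{X}$ is a multidimensional Shintani zeta random variable on $X$ associated with $x$ for this $\vsg$. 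The main obstacle, and the only genuinely delicate point, is pinning the support values down exactly: for arbitrary, possibly linearly dependent, targets $\va_\ell$ one must avoid ever requiring some $\log(n_\ell+u_j)$ to equal a prescribed nonzero constant at integer arguments, and the combination of the basis multi-indices $\vec{e}_k$ (which kill all but one logarithm) with the real rescaling $\vec{c}_\ell=-(1/\log 2)\va_\ell$ is precisely the device that circumvents it. Once the values are fixed, the weight-matching is the single choice of $\theta$ recorded above.
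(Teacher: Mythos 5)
Your proposal is correct and is essentially the paper's own construction: the paper also takes $r=m$, $\lambda_{\ell j}=\delta_{\ell j}$, $u_j=1$, $\vec{c}_\ell$ a negative log-reciprocal rescaling of $\va_\ell$ (there $\vec{c}_\ell=-(\log j_\ell)^{-1}\va_\ell$ for distinct integers $j_\ell>1$, with $\theta$ supported on the multi-indices $(0,\dots,j_\ell-1,\dots,0)$ and weights $\beta_\ell\e^{-\la\va_\ell,\vsg\ra}$), your version merely specializing all $j_\ell$ to $2$ so the support is $\{\vec{e}_1,\dots,\vec{e}_m\}$. The only cosmetic difference is that the paper concludes by matching characteristic functions, $f_{\vsg}^{X,\Phi}(\vt)=\sum_{\ell=1}^m\beta_\ell\e^{\ii\la\va_\ell,\vt\ra}={\bf E}[\e^{\ii\la\vt,\Phi(\mathcal{X})\ra}]$, whereas you match the atoms and weights of the distribution directly.
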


\begin{proof}
It is easily seen that the characteristic function of $\Phi(\mathcal{X})$ is given by
$$
{\bf{E}}[e^{\ii\la \vt, \Phi(\mathcal{X})\ra}]=\sum_{\ell=1}^m 
\beta_\ell \e^{\ii \la \va_\ell, \vt\ra}, \qquad \vt \in \mathbb{R}^d. 
$$
We now consider the following multidimensional Shintani zeta function.  
Let $r=m$ and $j_1, j_2, \dots, j_m \in \mathbb{N}$ be $m$ distinct integers bigger than 1. 
We put $\vec{c}_\ell=-(\log j_\ell)^{-1}\va_\ell$, 
$\lambda_{\ell j}=\delta_{\ell j}$ and $u_j=1$ for 
$\ell, j=1, 2, \dots, m$, where $\delta_{\ell j}$ is the usual Kronecker's delta.  
We take an arbitrary vector 
$\vec{\sigma} \in \mathbb{R}^d \setminus \{\bm{0}\}$ 
and put
$$
\begin{aligned}
&\theta(n_1, n_2, \dots, n_m)\\
&:=\begin{cases}
\beta_{\ell}\e^{- \la \va_\ell, \vsg\ra} & 
\text{if }(n_1, \dots, n_\ell, \dots, n_m)=(0, \dots, j_\ell-1, \dots, 0), 
\, \ell=1, 2, \dots, m\\
0 & \text{otherwise}
\end{cases},
\end{aligned}
$$
which gives a non-negative  definite function satisfying
$|\theta(n_1, n_2, \dots, n_m)|=O\big((n_1+n_2+\cdots+n_m)^\ve\big)$ 
for any $\ve>0$.
Then we have
$$
\begin{aligned}
Z_S^{X, \Phi}(x, \vs) 
&= \sum_{n_1, n_2, \dots, n_m=0}^\infty \frac{\theta(n_1, n_2, \dots, n_m)}
{\prod_{\ell=1}^{m}\big( \sum_{j=1}^m \lambda_{\ell j}(n_j+u_j)\big)
^{\la \vec{c}_\ell, \vs \ra}}\\
&=\sum_{\ell=1}^m \frac{\beta_{\ell}\e^{- \la \va_\ell, \vsg\ra}}
{j_\ell^{\la \vc_\ell, \vs\ra}}
 =\sum_{\ell=1}^m \frac{\beta_{\ell}\e^{- \la \va_\ell, \vsg\ra}}
 {\e^{-\la \va_\ell, \vs\ra}}
 =\sum_{\ell=1}^m \beta_\ell 
 \e^{\la \va_\ell, \vs-\vsg\ra}, \qquad \vs \in \mathbb{C}^d.  
\end{aligned}
$$
Thus, the assumption $\sum_{\ell=1}^m \beta_\ell=1$ implies that
$$
f_{\vsg}^{X, \Phi}(\vt)=\frac{Z_S^{X, \Phi}(x, \vsg+\ii \vt)}{Z_S^{X, \Phi}(x, \vsg)}
=\frac{\sum_{\ell=1}^m \beta_\ell 
 \e^{\ii\la \va_\ell, \vt\ra}}
 {\sum_{\ell=1}^m \beta_\ell}
 =\sum_{\ell=1}^m 
\beta_\ell \e^{\ii \la \va_\ell, \vt\ra}={\bf{E}}[\e^{\ii\la \vt, \Phi(\mathcal{X})\ra}]
$$
for $\vt \in \mathbb{R}^d$, which completes the proof. 
\end{proof}

Let $x_* \in V$ be an origin of $X$.
Then, we define a finite range random walk on $X$ starting from $x_*$
which is generated by multidimensional Shintani zeta functions as in the following. 

\begin{df}[finite range random walk on $X$ generated by multidimensional Shintani zeta functions on $X$]\label{Def:RW-Shintani}
For each $x \in V_0$, we choose arbitrary $\vsg(x) \in \mathbb{R}^d\setminus \{\bm{0}\}$ and $m(x) \in \mathbb{N}$. Let $\va_1(x), \va_2(x), \dots, \va_{m(x)}(x) \in \mathcal{J}(\Phi; x)$ be arbitrary $m(x)$ vectors.
A sequence of $V$-valued independent random variables $\{\mathcal{W}_n\}_{n=0}^\infty$ is called a finite range random walk 
generated by multidimensional Shintani zeta functions on $X$ if 
$\mathcal{W}_0=x_*$ a.s.\,and each 
$\mathcal{W}_n, \, n \in \mathbb{N}$, is a
multidimensional Shintani zeta random variable on $X$ associated with
$\pi(\mathcal{W}_{n-1}) \in V_0$ such that
\begin{align}\label{RW}
{{\bf E}}[\e^{\ii \la \vt, \Phi(\mathcal{W}_n)\ra}]
=
f_{\vsg(\pi(x_0))}^{X, \Phi}(\vt)
f_{\vsg(\pi(\mathcal{W}_1))}^{X, \Phi}(\vt) \cdots
f_{\vsg(\pi(\mathcal{W}_{n-1}))}^{X, \Phi}(\vt), 
\qquad \vt \in \mathbb{R}^d. 
\end{align}
\end{df}

Note that our random walk is defined by its characteristic function as in \eqref{RW}.
All the increments $\mathcal{W}_1 -\mathcal{W}_0, \dots,
\mathcal{W}_n -\mathcal{W}_{n-1}$ at each steps are independent but the 
corresponding distribution of $\mathcal{W}_i -\mathcal{W}_{i-1}$ depend on which $x=\pi(\mathcal{W}_{i-1}) \in V_0$, $i=1, 2, \dots ,n$, they are at.
Therefore, we define our random walks by characteristic functions as to make things simple.

\section{{\bf Random walks on crystal lattices of infinite supports}}

\subsection{Multidimensional Euler products on crystal lattices }

The basic settings are same as in the previous section. Throughout this section, 
we assume the following. 
$$
|V_0|=1, \text{ {\it that is, $X$ is a Cayley graph of $\Gamma=\mathbb{Z}^d$ with a generating set $\mathcal{S}$}.}  
$$ 
Since the $d$-dimensional crystal lattice $X$ can be regarded as a subset of $d$-dimensional
Euclidean space $\mathbb{R}^d$ through a periodic realization $\Phi$, 
we expect that $d$-dimensional  Euler products on $X$
may be defined
in the same way as the multidimensional polynomial Euler products (\ref{ZE}) 
on $\mathbb{R}^d$. 
However, since the class $\mathcal{Z}_E^{(d)}$  is too large to be treatable in graph settings, 
we introduce its suitable subclass $\mathcal{Z}_{f\!E}^{X, \Phi}$ consisting 
of certain finite Euler products. 
Then the compound Poisson zeta distributions on crystal lattices generated by the finite 
Euler products can be defined.  
This will give concrete and direct ways to treat random walks of infinite ranges with values in 
infinite periodic graphs.

Let $m \in \mathbb{N}$.  
We agree that the simplest way to define multidimensional polynomial Euler products on 
a crystal lattice $X$ would be 
\begin{equation}\label{ZEG}
Z_{E}^{X, \Phi}(\vs)=\prod_{p \in \mathbb{P}}\prod_{\ell=1}^{m} 
\big( 1 - \alpha_{\ell}(p)
p^{-\la \va_{\ell}, \vs \ra}\big)^{-1}, \qquad \vs \in \mathbb{C}^d,
\end{equation}
where 
$-1 \le \alpha_{\ell}(p) \leq 1$ for 
$\ell=1, 2, \dots, m$ and $p \in \mathbb{P}$, and 
$\va_{\ell} \in \Phi(V) \subset \mathbb{R}^d$ for $\ell=1, 2, \dots, m$.
We now consider a function given by 
$$
f_{\vsg}^{X, \Phi}(\vt):=
\frac{Z_E^{X, \Phi}(\vsg+\ii \vt)}{Z_E^{X, \Phi}(\vsg)}, \qquad \vt \in \mathbb{R}^d,
$$
where $\vsg$ satisfies $\min\{\la \va_\ell, \vec{\sigma}\ra|\, \ell=1, 2, \dots, m\}>1$. 
As in Theorem \ref{AN-zeta}, there is no doubt that
the function $f_{\vec{\sigma}}^{X, \Phi}$ is also to be a compound Poisson 
characteristic function on $\mathbb{R}^d$ under some suitable situations. 
Moreover, since the periodic realization $\Phi : X \to \mathbb{R}^d$ is non-degenerate, 
we also expect that the characteristic function $f_{\vec{\sigma}}^{X, \Phi}$
can be regarded as a function on the crystal lattice $X$ and 
that $f_{\vec{\sigma}}^{X, \Phi}$ induces a ``compound Poisson random variable''
with values in $X$ having infinitely many mass points. 
Whereas, such ideas do not work well in that the pull-backs of delta masses may not lie on vertices of the crystal lattice $X$ in general.

Therefore, we need to find a subclass of $\mathcal{Z}_E^{(d)}$ in which such ideas
does work properly. 
We define a set of vectors $\mathcal{J}$  by
$$
\mathcal{J}:=\big\{\va= \ve_1 \gamma_{i_1} +\ve_2 \gamma_{i_2}
+ \cdots +\ve_k \gamma_{i_k}\, \big| \, \gamma_{i_j} \in \mathcal{S}, 
\, \ve_j=\pm 1, \, j =1, 2, \dots, k, \, k \in \mathbb{N}\big\},
$$
where we identify each $\gamma_{i_\ell} \in \mathcal{S}$ 
with $\gamma_{i_\ell}=\gamma_{i_\ell} \otimes 1 \in \mathbb{R}^d$. 
We should emphasize that every $\Phi(x), \, x \in V,$ can be represented 
as an element of $\mathcal{J}$ thanks to $|V_0|=1$.  
Then we define the following.

\begin{df}[multidimensional finite Euler product on $X$]
\label{Def-finite Euler}
Let $m \in \mathbb{N}$ and $\vs \in \mathbb{C}^d$. 
For $-1 \le \alpha_\ell(p) \le 1$ and $\vec{a}_{\ell} \in \mathcal{J}$,  
$\ell=1, 2, \dots, m$,  
we define a multidimensional Euler product on $X$ by 
\begin{equation}\label{finite-Euler-1}
Z_{f\!E}^{X, \Phi}(\vs)=\prod_{\ell=1}^{m} 
\big( 1 - \alpha_{\ell}
\e^{-\la \va_{\ell}, \vs \ra}\big)^{-1}.
\end{equation}
\end{df} 

We refer to \cite{AN1} for a related study of 
multivariate finite Euler products on $\mathbb{R}^2$
in probabilistic view. 
We denote by $\mathcal{Z}_{f\!E}^{X, \Phi}$ the set of all functions of the form \eqref{finite-Euler-1}. 
Then the following relation with $\mathcal{Z}_E^{(d)}$ is verified. 

\begin{pr}
We have 
$\mathcal{Z}_{f\!E}^{X, \Phi} \subset \mathcal{Z}_E^{(d)}$. 
\end{pr}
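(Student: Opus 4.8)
The plan is to show that every finite Euler product $Z_{f\!E}^{X,\Phi}$ can be rewritten \emph{verbatim} as a polynomial Euler product of the form \eqref{ZE}, by collapsing the product over $\mathbb{P}$ onto a single prime and rescaling the exponent vectors. The crucial observation is the elementary identity $p^{-\la \vb, \vs\ra}=\e^{-\la (\log p)\vb, \vs\ra}$, which trades a prime base $p$ for the base $\e$ at the cost of scaling $\vb$ by $\log p$. This is exactly what is needed to convert the factors $\e^{-\la \va_\ell, \vs\ra}$ appearing in \eqref{finite-Euler-1} into factors $p^{-\la \vb_\ell, \vs\ra}$.

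First I would fix an arbitrary element of $\mathcal{Z}_{f\!E}^{X,\Phi}$, determined by coefficients $-1\le\alpha_\ell\le 1$ and vectors $\va_\ell\in\mathcal{J}\subset\mathbb{R}^d$, $\ell=1,2,\dots,m$. Choosing the fixed prime $p_0=2$, I would define the polynomial-Euler data by $\vb_\ell:=(\log p_0)^{-1}\va_\ell\in\mathbb{R}^d$ together with
\[
\alpha_\ell(p):=\begin{cases} \alpha_\ell & p=p_0, \\ 0 & p\neq p_0, \end{cases}
\qquad \ell=1,2,\dots,m.
\]
Since $\alpha_\ell\in[-1,1]$ and $0\in[-1,1]$, the admissibility condition $-1\le\alpha_\ell(p)\le 1$ holds for all $\ell$ and all $p\in\mathbb{P}$, and $\vb_\ell\in\mathbb{R}^d$, so these data do define a member of $\mathcal{Z}_E^{(d)}$ in the sense of \eqref{ZE}.

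Next I would substitute this data into \eqref{ZE} and observe that every factor indexed by $p\neq p_0$ reduces to $1$, because $\alpha_\ell(p)=0$ forces $(1-\alpha_\ell(p)p^{-\la\vb_\ell,\vs\ra})^{-1}=1$. For the surviving prime $p_0$, the identity $p_0^{-\la\vb_\ell,\vs\ra}=\e^{-\la(\log p_0)\vb_\ell,\vs\ra}=\e^{-\la\va_\ell,\vs\ra}$ turns each factor $(1-\alpha_\ell(p_0)p_0^{-\la\vb_\ell,\vs\ra})^{-1}$ into $(1-\alpha_\ell\,\e^{-\la\va_\ell,\vs\ra})^{-1}$, so the whole product over $\ell$ coincides with $Z_{f\!E}^{X,\Phi}(\vs)$. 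This gives the claimed containment $\mathcal{Z}_{f\!E}^{X,\Phi}\subset\mathcal{Z}_E^{(d)}$.

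There is essentially no analytic obstacle: because all but one prime are suppressed, the product over $\mathbb{P}$ is effectively finite, so any question of absolute convergence is vacuous and the equality of the two expressions is an identity of functions on all of $\mathbb{C}^d$. The only matters to track are bookkeeping ones --- that the number of factors $m$ is preserved, that $0$ is an admissible coefficient, and that the rescaling $\va_\ell\mapsto(\log p_0)^{-1}\va_\ell$ keeps the exponent vectors in $\mathbb{R}^d$ --- none of which presents any difficulty.
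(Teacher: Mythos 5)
Your proof is correct and follows essentially the same route as the paper's: both define $\alpha_\ell(p)$ to vanish at all but finitely many primes and rescale $\vb_\ell=(\log p)^{-1}\va_\ell$ so that $p^{-\la \vb_\ell,\vs\ra}=\e^{-\la \va_\ell,\vs\ra}$. The only (immaterial) difference is that the paper assigns a distinct prime $p_\ell$ to each factor $\ell$, whereas you collapse everything onto the single prime $p_0=2$; both choices yield valid data for \eqref{ZE}.
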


\begin{proof}
We take a function $Z_{f\!E}^{X, \Phi}(\vec{s}) \in \mathcal{Z}_{f\!E}^{X, \Phi}$ 
given as in the following.
Let us fix distinct $m$ prime numbers $p_1, p_2, \dots, p_m \in \mathbb{P}$. 
We define $\alpha_\ell(p)$ for 
$\ell=1, 2, \dots, m$ and $p \in \mathbb{P}$ by
\begin{equation}\label{p-reduction}
\alpha_\ell(p):=\begin{cases}
\alpha_\ell & \text{if } p=p_\ell \\
0 & \text{otherwise}
\end{cases}.
\end{equation}
Then it is clear that $-1 \le \alpha_\ell(p) \leq 1$ for $\ell=1, 2, \dots, m$ and $p \in \mathbb{P}$. 
Moreover, we put $\vec{b}_\ell:=(\log p_\ell)^{-1}\vec{a}_{\ell}$ for $\ell=1, 2, \dots, m$. 
Then we obtain
$$
Z_{f\!E}^{X, \Phi}(\vec{s})=\prod_{\ell=1}^{m} 
\big( 1 - \alpha_{\ell}
p_\ell^{-(\log p_\ell)\la \va_{\ell}, \vs \ra}\big)^{-1}
=\prod_{p \in \mathbb{P}}\prod_{\ell=1}^{m} 
\big( 1 - \alpha_{\ell}(p)
p^{-\la \vec{b}_{\ell}, \vs \ra}\big)^{-1} \in \mathcal{Z}_E^{(d)},
$$
which completes the proof. 
\end{proof}

\subsection{Conditions to generate characteristic functions}

We provide a necessary and sufficient condition for some 
multidimensional finite Euler products to generate 
compound Poisson characteristic functions on $\mathbb{R}^d$
by following the claim which was  discussed in \cite{AN3}. 
Our aim in this section is to prove the following. 

\begin{tm} \label{New-zeta-1}
Let $Z_{f\!E}^{X, \Phi}(\vec{s}) \in \mathcal{Z}_{f\!E}^{X, \Phi}$. Suppose that
$\va_{\ell}, \, \ell=1, 2, \dots, m$, 
 in {\rm (\ref{finite-Euler-1})} satisfy {\bf (LI)}. 
We also suppose that $\vsg \in \mathbb{R}^d$ satisfies 
$\min\{\la \va_{\ell}, \vsg \ra| \, \ell=1, 2, \dots, m\}>0$. 
Then, the function
$$
f_{\vsg}^{X, \Phi}(\vt)=\frac{Z_{f\!E}^{X, \Phi}(\vsg+\ii \vt)}{Z_{f\!E}^{X, \Phi}(\vsg)}, \qquad \vt \in \mathbb{R}^d,
$$
is a characteristic function on $\mathbb{R}^d$ if and only if 
$\alpha_{\ell} \geq 0$
for all $\ell=1, 2, \dots, m$. 
Moreover, $f_{\vsg}^{X, \Phi}(\vt)$ is a compound Poisson characteristic function with
its finite L\'evy measure $N_{\vsg}^{X, \Phi}(d\vx)$ on $\mathbb{R}^d$ given by
\begin{equation}\label{New-zeta-1-Levy}
N_{\vsg}^{X, \Phi}(d\vx)=
\sum_{r=1}^\infty \sum_{\ell=1}^m
 \frac{1}{r}\alpha_{\ell}^r \e^{-r\la \va_{\ell}, \vsg\ra}
 \delta_{r \va_{\ell}}(d\vx).
\end{equation}
\end{tm}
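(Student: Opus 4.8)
The theorem is really the graph-adapted restatement of Proposition \ref{AN-zeta} for the finite Euler product \eqref{finite-Euler-1}, so the natural strategy is to take the logarithm of $f_{\vsg}^{X,\Phi}$, expand it into a L\'evy--Khintchine-type exponent, and then invoke the \textbf{(LI)} hypothesis to pin down exactly when the resulting exponent is that of a genuine (compound Poisson) characteristic function. First I would write, for $\vs=\vsg+\ii\vt$ with $\min_\ell\la\va_\ell,\vsg\ra>0$,
\begin{equation*}
\log Z_{f\!E}^{X,\Phi}(\vs)
=-\sum_{\ell=1}^m\log\bigl(1-\alpha_\ell\e^{-\la\va_\ell,\vs\ra}\bigr)
=\sum_{\ell=1}^m\sum_{r=1}^\infty\frac{1}{r}\alpha_\ell^r\,\e^{-r\la\va_\ell,\vs\ra},
\end{equation*}
where the Taylor expansion $-\log(1-z)=\sum_r z^r/r$ is valid because $|\alpha_\ell\e^{-\la\va_\ell,\vs\ra}|\le\e^{-r\la\va_\ell,\vsg\ra}<1$ on the stated region (this is where the convergence guaranteed earlier enters). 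Subtracting the value at $\vt=\bm 0$ gives
\begin{equation*}
\log f_{\vsg}^{X,\Phi}(\vt)
=\sum_{\ell=1}^m\sum_{r=1}^\infty\frac{1}{r}\alpha_\ell^r\,\e^{-r\la\va_\ell,\vsg\ra}\bigl(\e^{\ii r\la\va_\ell,\vt\ra}-1\bigr),
\end{equation*}
which, upon recognizing $r\la\va_\ell,\vt\ra=\la r\va_\ell,\vt\ra$, is exactly $\int_{\mathbb{R}^d}(\e^{\ii\la\vt,\vx\ra}-1)\,N_{\vsg}^{X,\Phi}(d\vx)$ for $N_{\vsg}^{X,\Phi}$ as in \eqref{New-zeta-1-Levy}. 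This already establishes the ``moreover'' part once the sign condition is in force, since a finite measure of this exponential form is the L\'evy measure of a compound Poisson law by Proposition \ref{LK-formula}.

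\emph{Sufficiency} ($\alpha_\ell\ge0\Rightarrow$ characteristic function) is then immediate: if all $\alpha_\ell\ge0$, the coefficients $\tfrac1r\alpha_\ell^r\e^{-r\la\va_\ell,\vsg\ra}$ are nonnegative, so $N_{\vsg}^{X,\Phi}$ is a genuine finite measure (finiteness follows from $\sum_{\ell,r}\tfrac1r\alpha_\ell^r\e^{-r\la\va_\ell,\vsg\ra}=-\sum_\ell\log(1-\alpha_\ell\e^{-\la\va_\ell,\vsg\ra})<\infty$), and $\exp\bigl(\int(\e^{\ii\la\vt,\vx\ra}-1)\,N_{\vsg}^{X,\Phi}(d\vx)\bigr)$ is a compound Poisson characteristic function. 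For \emph{necessity} I would argue by contraposition: suppose some $\alpha_{\ell_0}<0$. Here is where the \textbf{(LI)} hypothesis does the essential work. Because $\va_1,\dots,\va_m$ are linearly independent, the frequencies $\{r\va_\ell\}_{r\ge1,\,\ell}$ appearing in the exponent are ``resolvable'': one can recover each individual coefficient $\tfrac1r\alpha_\ell^r\e^{-r\la\va_\ell,\vsg\ra}$ from $\log f_{\vsg}^{X,\Phi}$, for instance by testing $\vt$ along directions dual to the $\va_\ell$, or by the uniqueness clause of Proposition \ref{Levy-Khintchine}(2). If $f_{\vsg}^{X,\Phi}$ were a characteristic function it would be infinitely divisible (being an exponential of an absolutely convergent exponent), so its L\'evy--Khintchine triplet would be unique; but the formal signed measure $N_{\vsg}^{X,\Phi}$ with a negative atom at $r=1,\ \ell=\ell_0$ violates the positivity requirement \eqref{Levy measure} that any L\'evy measure must satisfy, giving a contradiction.

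\textbf{The main obstacle} is making the necessity direction rigorous, since one cannot simply read off positivity of $N_{\vsg}^{X,\Phi}$ from $f_{\vsg}^{X,\Phi}$ without first knowing the representation is unique. I expect the cleanest route is to follow the corresponding step in \cite{AN3}: under \textbf{(LI)} the map $r\mapsto r\va_\ell$ is injective and the images for distinct $\ell$ are disjoint (no nontrivial rational relations among the $\va_\ell$), so the atoms of the formal measure \eqref{New-zeta-1-Levy} are genuinely distinct points of $\mathbb{R}^d$; then a negative weight $\alpha_{\ell_0}^1\e^{-\la\va_{\ell_0},\vsg\ra}<0$ at the isolated point $\va_{\ell_0}$ cannot be cancelled by any other term, so no positive L\'evy measure can produce this exponent, and by uniqueness $f_{\vsg}^{X,\Phi}$ fails to be infinitely divisible, hence is not a characteristic function of the required type. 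Since \eqref{finite-Euler-1} is literally the $p$-reduction \eqref{p-reduction} of a product in $\mathcal{Z}_E^{(d)}$ satisfying \textbf{(LI)}, an efficient alternative is to deduce the whole statement directly from Proposition \ref{AN-zeta} by specializing $\alpha_\ell(p)$ and replacing $p^{-\la\vb_\ell,\vs\ra}$ with $\e^{-\la\va_\ell,\vs\ra}$; I would present the self-contained computation above as the primary argument and remark on this reduction as a consistency check.
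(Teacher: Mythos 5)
Your sufficiency direction is correct and coincides with the paper's: expand $-\log(1-z)$ on the region $\min_\ell\la\va_\ell,\vsg\ra>0$, observe that the total mass of \eqref{New-zeta-1-Levy} is finite, and conclude via Proposition \ref{LK-formula} that $f_{\vsg}^{X,\Phi}$ is compound Poisson. (One small slip: since $\e^{-r\la\va_\ell,\vsg+\ii\vt\ra}=\e^{-r\la\va_\ell,\vsg\ra}\e^{-\ii r\la\va_\ell,\vt\ra}$, the exponent involves $\e^{-\ii r\la\va_\ell,\vt\ra}-1$, not $\e^{+\ii r\la\va_\ell,\vt\ra}-1$; the paper keeps the atoms of $N_{\vsg}^{X,\Phi}$ at $+r\va_\ell$ by writing the exponent with the kernel $\e^{-\ii\la\vt,\vx\ra}-1$ and accordingly places the jumps of the compound Poisson variable at $-r\va_\ell$ in \eqref{FZ-CP-1step-rv}.)

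The necessity direction, however, has a genuine gap, and it is precisely the step you flag as ``the main obstacle.'' Your argument runs: if some $\alpha_{\ell_0}<0$, the exponent of $f_{\vsg}^{X,\Phi}$ is a signed measure with an uncancellable negative atom; by uniqueness of the L\'evy--Khintchine triplet, $f_{\vsg}^{X,\Phi}$ cannot be infinitely divisible; ``hence is not a characteristic function of the required type.'' But the theorem asserts that $f_{\vsg}^{X,\Phi}$ is not a characteristic function \emph{at all}, and failure of infinite divisibility does not give that. Your claim that a characteristic function of this exponential form ``would be infinitely divisible'' is false: there exist probability laws (the quasi-infinitely divisible ones) whose characteristic functions have exactly the form $\exp\bigl(\int_{\mathbb{R}^d}(\e^{\ii\la\vt,\vx\ra}-1)\,dN\bigr)$ with $N$ a finite \emph{signed} measure carrying negative atoms. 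A concrete one-dimensional instance is the Bernoulli law with $p<1/2$, whose characteristic function satisfies
\begin{equation*}
1-p+p\,\e^{\ii t}
=\exp\Bigl(\sum_{r=1}^\infty\tfrac{(-1)^{r+1}}{r}\bigl(\tfrac{p}{1-p}\bigr)^{r}\bigl(\e^{\ii rt}-1\bigr)\Bigr),
\end{equation*}
a L\'evy--Khintchine-type exponent with negative atoms at even $r$, yet it is certainly a characteristic function. So ``signed exponent plus uniqueness'' can never yield the conclusion; it only shows $f_{\vsg}^{X,\Phi}$ is not an \emph{infinitely divisible} characteristic function. The paper closes this gap by an entirely different device, Lemma \ref{Key Lemma}: using Kronecker's approximation theorem --- with {\bf (LI)} supplying a direction $\vt_0$ along which the frequencies $\omega_\ell=\la\va_\ell,\vt_0\ra$ are linearly independent over the rationals --- one aligns the phases so that the factors with $\alpha_\ell<0$ inflate the modulus while those with $\alpha_\ell\ge0$ stay near $1$, producing a point where $|f_{\vsg}^{X,\Phi}|>1$. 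Since every characteristic function has modulus at most $1$, this excludes $f_{\vsg}^{X,\Phi}$ from being \emph{any} characteristic function, which is what the theorem demands; this modulus argument (or an equivalent) is the missing essential idea. Finally, your fallback of specializing Proposition \ref{AN-zeta} through the $p$-reduction \eqref{p-reduction} also needs care: that proposition is formulated on the region $\min_\ell\la\vb_\ell,\vsg\ra>1$, whereas here one assumes only $\min_\ell\la\va_\ell,\vsg\ra>0$, and $\la\vb_\ell,\vsg\ra=\la\va_\ell,\vsg\ra/\log p_\ell$ need not exceed $1$, so the present theorem is not literally a special case of it.
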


The following lemma plays an
essential role
in the proof of Theorem \ref{New-zeta-1}. 
We give the proof by applying the (first form of) Kronecker
approximation theorem. 

\begin{lm}\label{Key Lemma}
If there exist $\ell, \, \ell=1, 2, \dots, m,$ satisfying
$\alpha_\ell<0$, then 
there exists $\vt_0 \in \mathbb{R}^d$ such that 
$|f_{\vsg}^{X, \Phi}(\vt_0)|>1$. 
\end{lm}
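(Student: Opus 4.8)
The plan is to exploit that every characteristic function is bounded by $1$ in modulus, so it suffices to exhibit a single $\vt_0$ at which $|f_{\vsg}^{X,\Phi}|$ exceeds $1$; the lemma then feeds by contraposition into the ``only if'' half of Theorem \ref{New-zeta-1}. First I would factor
$$
f_{\vsg}^{X,\Phi}(\vt)=\prod_{\ell=1}^m\frac{1-\alpha_\ell\e^{-\la\va_\ell,\vsg\ra}}{1-\alpha_\ell\e^{-\la\va_\ell,\vsg+\ii\vt\ra}},
$$
and, writing $\beta_\ell:=\alpha_\ell\e^{-\la\va_\ell,\vsg\ra}$ and $\tau_\ell:=\la\va_\ell,\vt\ra$, use the convention $\la\va_\ell,\vsg+\ii\vt\ra=\la\va_\ell,\vsg\ra+\ii\la\va_\ell,\vt\ra$ to reduce the modulus of the $\ell$-th factor to
$$
\frac{|1-\beta_\ell|}{|1-\beta_\ell\e^{-\ii\tau_\ell}|},\qquad
|1-\beta_\ell\e^{-\ii\tau_\ell}|^2=1-2\beta_\ell\cos\tau_\ell+\beta_\ell^2.
$$
Here the hypothesis $\min_\ell\la\va_\ell,\vsg\ra>0$ guarantees $|\beta_\ell|\le\e^{-\la\va_\ell,\vsg\ra}<1$, so no denominator vanishes and all the manipulations below are legitimate.

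The elementary heart of the argument is a single-factor estimate. Fix an index $\ell_0$ with $\alpha_{\ell_0}<0$, hence $\beta_{\ell_0}<0$. At phase $\tau_{\ell_0}=\pi$ the $\ell_0$-th factor has modulus $\frac{1+|\beta_{\ell_0}|}{1-|\beta_{\ell_0}|}>1$, whereas at phase $\tau_\ell=0$ every other factor has modulus exactly $1$, since numerator and denominator then coincide. Thus, if I can produce $\vt_0$ realizing $\tau_{\ell_0}=\pi$ and $\tau_\ell=0$ for all $\ell\neq\ell_0$ simultaneously, the whole product collapses to $\frac{1+|\beta_{\ell_0}|}{1-|\beta_{\ell_0}|}>1$ and the lemma follows.

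The crux is therefore the simultaneous prescription of the phases $(\tau_1,\dots,\tau_m)$, and this is precisely where condition {\bf (LI)} enters. Since $\va_1,\dots,\va_m$ are linearly independent, the linear map $\vt\mapsto(\la\va_1,\vt\ra,\dots,\la\va_m,\vt\ra)$ from $\mathbb{R}^d$ onto $\mathbb{R}^m$ has rank $m$ and is surjective, so the target phase vector $(0,\dots,0,\pi,0,\dots,0)$ is attained exactly by some $\vt_0$. Alternatively, and in the spirit of the remark preceding the lemma, one realizes the phases modulo $2\pi$ via the density on the torus $(\mathbb{R}/2\pi\mathbb{Z})^m$ of the orbit $\{(\la\va_1,\vt\ra,\dots,\la\va_m,\vt\ra)\bmod 2\pi\}$ furnished by the first form of Kronecker's approximation theorem; strictness of $\frac{1+|\beta_{\ell_0}|}{1-|\beta_{\ell_0}|}>1$ together with continuity then makes an approximate realization enough. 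I expect this decoupling step to be the main obstacle: without an independence hypothesis the factors cannot be steered independently, and a single scalar phase generally fails to push the product above $1$, so {\bf (LI)} (or Kronecker in the rationally independent setting) is exactly what removes the obstruction.
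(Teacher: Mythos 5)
Your proof is correct, but it takes a genuinely different and more elementary route than the paper's. The paper works along a one-dimensional slice: it picks $\vt_0$ with $\la\va_\ell,\vt_0\ra=\omega_\ell$ for rationally independent algebraic numbers $\omega_\ell$, expands $D(T)=\log|f_{\vsg}^{X,\Phi}(T\vt_0)|$ as a double series, splits it into four sums according to the sign of $\alpha_\ell$ and the parity of $r$, and invokes Kronecker's approximation theorem to find $T_0$ with $\e^{\ii T_0\omega_\ell}\approx 1$ for $\alpha_\ell\ge 0$ and $\e^{\ii T_0\omega_\ell}\approx -1$ for $\alpha_\ell<0$, closing with an $\ve$-bookkeeping argument. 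You instead use {\bf (LI)} in its full strength: linear independence makes the phase map $\vt\mapsto(\la\va_1,\vt\ra,\dots,\la\va_m,\vt\ra)$ surjective onto $\mathbb{R}^m$, so the phases can be prescribed \emph{exactly} --- $\pi$ at one negative index, $0$ elsewhere --- which collapses the finite product to the explicit value $\frac{1+|\beta_{\ell_0}|}{1-|\beta_{\ell_0}|}>1$ with no approximation and no series expansion (legitimate because $|\beta_\ell|\le\e^{-\la\va_\ell,\vsg\ra}<1$ keeps every factor finite and nonzero, exactly as you note). What your approach buys is brevity, an exact lower bound, and no need for Kronecker or for the sign/parity decomposition; what the paper's approach buys is robustness: its density-based argument is the one that survives in the {\bf (LR)} setting of Proposition \ref{AN-zeta} (vectors that are rationally independent scalar multiples of a single $\va$), where the phase map has rank one and exact prescription of the phase vector is impossible. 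Since Lemma \ref{Key Lemma} is only invoked under {\bf (LI)}, your argument fully suffices for the paper's purposes; your closing side remark about realizing the phases via density on the torus is not needed (and, as stated for the full orbit over $\vt\in\mathbb{R}^d$, density is immediate from surjectivity rather than from Kronecker), but the main argument stands without it.
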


\begin{proof}
Let us put 
$$
A^+:=\{\ell \in \{1, 2, \dots, m\} \, | \, \alpha_\ell \ge 0\}, \qquad A^-:=\{\ell \in \{1, 2, \dots, m\} \, | \, \alpha_\ell<0\}.
$$
Let $\omega_1, \omega_2, \dots, \omega_m$ with $\omega_1=1$ be algebraic real
numbers which are linearly independent over the rationals. Since $\va_1, \va_2, \dots, \va_m$ satisfy {\bf (LI)}, 
there exists $\vt_0 \in \mathbb{R}^d$ such that 
$\la \va_\ell, \vt_0 \ra=\omega_\ell$
for $\ell=1, 2, \dots, m$.
We define a function
$D : \mathbb{R} \to \mathbb{R}$ by 
$D(T):=\log |f_{\vsg}^{X, \Phi}(T\vt_0)|, \, T \in \mathbb{R}$. 
A direct computation gives us
\begin{align}
D(T)&=\frac{1}{2}\log\Bigg|\frac{Z_{f\!E}^{X, \Phi}(\vsg-\ii T\vt_0)}{Z_{f\!E}^{X, \Phi}(\vsg)} \cdot \frac{Z_{f\!E}^{X, \Phi}(\vsg+\ii T\vt_0)}{Z_{f\!E}^{X, \Phi}(\vsg)}\Bigg|\nn\\
&=\frac{1}{2}\sum_{r=1}^\infty \sum_{\ell=1}^m 
\frac{1}{r}\alpha_\ell^r \e^{-r\la \va_\ell, \vsg\ra}
\Big(\e^{\ii r T \la \va_\ell, \vt_0 \ra}+
\e^{-\ii rT \la \va_\ell, \vt_0 \ra}-2\Big)\nn\\
&=\frac{1}{2}\sum_{r=1}^\infty \sum_{\ell=1}^m 
\frac{1}{r}\alpha_\ell^r \e^{-r\la \va_\ell, \vsg\ra}
\Big(\e^{\ii rT \omega_\ell}+
\e^{-\ii rT \omega_\ell}-2\Big), \qquad T \in \mathbb{R}.
\label{Proof:Lemma4.4-1}
\end{align}
Let $K \in \mathbb{N}$. We
decompose the right-hand side of \eqref{Proof:Lemma4.4-1}
as 
$$
\begin{aligned}
D(T)&=\frac{1}{2}\sum_{r=2K+1}^\infty \sum_{\ell =1}^m 
\frac{1}{r}\alpha_\ell^r \e^{-r\la \va_\ell, \vsg\ra}
\Big(\e^{\ii rT \omega_\ell}+
\e^{-\ii rT \omega_\ell}-2\Big)\\
&\hspace{1cm}+\frac{1}{2}
\sum_{r=1}^{2K} \sum_{\ell \in A^+}\frac{1}{r}\alpha_\ell^r \e^{-r\la \va_\ell, \vsg\ra}
\Big(\e^{\ii rT \omega_\ell}+
\e^{-\ii rT \omega_\ell}-2\Big)\\
&\hspace{1cm}+\frac{1}{2}
\sum_{k=1}^K\sum_{\ell \in A^-}\frac{1}{2k}\alpha_\ell^{2k} \e^{-2k\la \va_\ell, \vsg\ra}
\Big(\e^{\ii 2kT \omega_\ell}+
\e^{-\ii 2kT \omega_\ell}-2\Big)\\
&\hspace{1cm}+\frac{1}{2}
\sum_{k=1}^K\sum_{\ell \in A^-}\frac{1}{2k-1}\alpha_\ell^{2k-1} \e^{-(2k-1)\la \va_\ell, \vsg\ra}
\Big(\e^{\ii (2k-1)T \omega_\ell}+
\e^{-\ii (2k-1)T \omega_\ell}-2\Big)\\
&=:I_1(T)+I_2(T)+I_3(T)+I_4(T). 
\end{aligned}
$$
We easily see that, for any $\ve_1>0$, there exists a 
sufficiently large $K \in \mathbb{N}$ such that $|I_1(T)|<\ve_1$ for all $T \in \mathbb{R}$. 
Since $\omega_1, \omega_2, \dots, \omega_m$ are linearly independent over the rationals, 
Kronecker's approximation theorem (cf.\,\cite[Theorem 7.9]{Apostol}) implies that, for any $\ve_2>0$ independent of $\ve_1$, there exists $T_0 \in \mathbb{R}$
such that 
$$
|\e^{\ii T_0\omega_\ell}-1|<\ve_2, \quad \ell \in A^+, \qquad 
|\e^{\ii T_0\omega_\ell}+1|<\ve_2, \quad \ell \in A^-.
$$
Then we can give estimates of $I_2(T_0), I_3(T_0)$ and 
$I_4(T_0)$ as in the following. Since
$$
|\e^{\ii rT_0 \omega_\ell}-1|
=|\e^{\ii T_0 \omega_\ell}-1||\e^{\ii (r-1)T_0 \omega_\ell}+
\e^{\ii (r-2)T_0 \omega_\ell}+\cdots+1|\le r\ve_2 \le 2K\ve_2, 
$$
for $r=1, 2, \dots, 2K$, we have
$$
\begin{aligned}
I_2(T_0) &\ge -2K\ve_2
\sum_{r=1}^{2K} \sum_{\ell \in A^+}\frac{1}{r}\alpha_\ell^r \e^{-r\la \va_\ell, \vsg\ra}.
\end{aligned}
$$
Similarly, since it holds that
$$
|\e^{\ii 2kT_0 \omega_\ell}-1|
=|\e^{\ii T_0 \omega_\ell}-1||\e^{\ii T_0 \omega_\ell}+1||\e^{\ii (2k-2)T_0 \omega_\ell}+
\e^{\ii (2k-4)T_0 \omega_\ell}+\cdots+1|\le 2k\ve_2 \le 2K\ve_2,
$$
for $k=1, 2, \dots, K$, we also have
$$
\begin{aligned}
I_3(T_0) &\ge -2K\ve_2\sum_{k=1}^{K} \sum_{\ell \in A^-}\frac{1}{2k}\alpha_\ell^{2k} \e^{-2k\la \va_\ell, \vsg\ra}.
\end{aligned}
$$
Moreover, by noting 
$$
|\e^{\ii (2k-1)T_0 \omega_\ell}+1|
=|\e^{\ii T_0 \omega_\ell}+1||\e^{\ii (2k-2)T_0 \omega_\ell}+
\e^{\ii (2k-3)T_0 \omega_\ell}+\cdots+1|\le (2k-1)\ve_2 \le 2K\ve_2,
$$
for $k=1, 2, \dots, K$, we have
$$
\begin{aligned}
I_4(T_0) &\ge -(2K+2)\ve_2\sum_{k=1}^{K} \sum_{\ell \in A^-}\frac{1}{2k-1}\alpha_\ell^{2k-1} \e^{-(2k-1)\la \va_\ell, \vsg\ra}.
\end{aligned}
$$
By putting them all together, we obtain
$$
\begin{aligned}
D(T_0) &> -\ve_1-2K\ve_2
\sum_{r=1}^{2K} \sum_{\ell \in A^+}\frac{1}{r}\alpha_\ell^r \e^{-r\la \va_\ell, \vsg\ra}-2K\ve_2\sum_{k=1}^{K} \sum_{\ell \in A^-}\frac{1}{2k}\alpha_\ell^{2k} \e^{-2k\la \va_\ell, \vsg\ra}\\
&\hspace{1cm}+(2K\ve_2-2)\sum_{k=1}^{K} \sum_{\ell \in A^-}\frac{1}{2k-1}\alpha_\ell^{2k-1} \e^{-(2k-1)\la \va_\ell, \vsg\ra}. 
\end{aligned}
$$
Suppose that $\ve_1$ and $\ve_2$ are sufficiently small so that
$K\ve_2<\ve_1$. Then we obtain $D(T_0)>0$, which 
completes the proof.  
\end{proof}

Next we prove Theorem \ref{New-zeta-1}.

\begin{proof}[Proof of Theorem \ref{New-zeta-1}]

By applying Lemma \ref{Key Lemma}, 
if there exists $\ell, \, \ell=1, 2, \dots, m,$ such that 
$\alpha_\ell<0$, then $f_{\vsg}^{X, \Phi}$ is not a 
characteristic function on $\mathbb{R}^d$, 
since all characteristic functions $f$ have to 
satisfy $|f(\vt)| \le 1$, 
$\vt \in \mathbb{R}^d$.
Therefore, we have only to show that 
$f_{\vsg}^{X, \Phi}$ is a compound Poisson characteristic function with a finite L\'evy measure
$N_{\vsg}^{X, \Phi}$ given by \eqref{New-zeta-1-Levy},
if $\alpha_\ell \ge 0$ for $\ell=1, 2, \dots, m$. 

Since $\alpha_\ell \ge 0$ for $\ell=1, 2, \dots, m$, 
$N_{\vsg}^{X, \Phi}$ is clearly a measure on $\mathbb{R}^d$. 
If $v:=\min\{\la \vec{a}_\ell, \vec{\sigma}\ra \, | \, \ell=1, 2, \dots, m\}>0$, 
then we obtain
$$
\begin{aligned}
\log f_{\vec{\sigma}}^{X, \Phi}(\vt)&=\log \frac{Z_E^{X, \Phi}(\vsg+\ii\vt)}{Z_E^{X, \Phi}(\vsg)}
=\sum_{r=1}^\infty \sum_{\ell=1}^m \frac{1}{r}\alpha_\ell^r \e^{-r\la \va_\ell, \vsg\ra}
(\e^{-\ii r\la \va_\ell, \vt\ra}-1)\\
&=\int_{\mathbb{R}^d} (\e^{-\ii\la \vt, \vx\ra}-1) \, N_{\vsg}^{X, \Phi}(d\vx),  \qquad \vt \in \mathbb{R}^d,
\end{aligned}
$$
and
$$
\begin{aligned}
N_{\vsg}^{X, \Phi}(\mathbb{R}^d) &= \int_{\mathbb{R}^d} \sum_{r=1}^\infty
\sum_{\ell=1}^m \frac{1}{r}\alpha_\ell^r \e^{-r\la \va_\ell, \vsg\ra}\delta_{r\va_\ell}(d\vx)\\
&\leq m\sum_{r=1}^\infty \frac{1}{r}\e^{-rv} 
\leq m \sum_{r=1}^\infty \e^{-rv} =\frac{m}{\e^v-1}<+\infty.
\end{aligned}
$$
Therefore, $f_{\vsg}^{X, \Phi}$ is 
an infinitely divisible characteristic function on $\mathbb{R}^d$ and $N_{\vsg}^{X, \Phi}$ is a finite 
L\'evy measure
on $\mathbb{R}^d$ by Proposition \ref{LK-formula}.
Let $\mathcal{X}$ be an 
$\mathbb{R}^d$-valued random variable by
\begin{equation}\label{FZ-CP-1step-rv}
{\bf{P}}(\mathcal{X}=-r \va_\ell):=\frac{1}{N_{\vec{\sigma}}^{X, \Phi}(\mathbb{R}^d)}\cdot
\frac{\alpha_\ell^r \e^{-r\la \va_\ell, \vsg \ra}}{r}, \qquad r \in \mathbb{N}, \, \ell=1, 2, \dots, m. 
\end{equation}
Suppose that $\mathcal{X}_1, \mathcal{X}_2, \dots$ are independent copies of $\mathcal{X}$ and 
$K_{\vec{\sigma}}^{X, \Phi}$ is a Poisson random variable with a parameter $\lambda:=N_{\vec{\sigma}}^{X, \Phi}(\mathbb{R}^d)>0$
independent of $\mathcal{X}$. Then we easily verify that
$$
f_{\vsg}^{X, \Phi}(\vt)=\exp\Big(\lambda\big(
\mathbf{E}[\e^{\ii\la \vt, \mathcal{X}\ra}]-1\big)\Big), \qquad \vt \in \mathbb{R}^d,
$$
which implies that $f_{\vsg}^{X, \Phi}$ is a 
compound Poisson characteristic
function with a finite L\'evy measure $N_{\vec{\sigma}}^{X, \Phi}$ on $\mathbb{R}^d$. 
This completes the proof.
\end{proof}

What is important in Theorem \ref{New-zeta-1} is that all delta masses in \eqref{New-zeta-1-Levy}
lie on the set $\Phi(V) \subset \mathbb{R}^d$. Therefore, we can pull the finite L\'evy measure
$N_{\vsg}^{X, \Phi}(d\vx)$ on $\mathbb{R}^d$ back to the crystal lattice $X$ and can construct 
a ``compound Poisson random variable'' with values in $X$ 
corresponding to $N_{\vsg}^{X, \Phi}(d\vx)$ properly.

Let $\vsg \in \mathbb{R}^d$ be a vector staisfying 
$\min\{\la \va_{\ell}, \vsg \ra| \, \ell=1, 2, \dots, m\}>0$.
Then we reach the following definition. 

\begin{df}[compound Poisson random variable on $X$ generated by a multidimensional Euler product]
A $V$-valued random variable $\mathcal{Y}_{\vsg}$ is called a
compound Poisson random variable on $X$ 
generated by $Z_{f\!E}^{X, \Phi} \in \mathcal{Z}_{f\!E}^{X, \Phi}$ if it satisfies
\begin{equation}\label{FZ-CP}
\Phi(\mathcal{Y}_{\vsg})=\mathcal{X}_1+\mathcal{X}_2+\cdots+\mathcal{X}_{K_{\vec{\sigma}}^{X, \Phi}} 
\end{equation}
for some sequence of independent and identically distributed 
random variables $\{\mathcal{X}_n\}_{n=1}^\infty$
whose distibution is given by \eqref{FZ-CP-1step-rv} and for some 
Poisson random variable $K_{\vsg}^{X, \Phi}$ independent of $\{\mathcal{X}_n\}_{n=1}^\infty$.
\end{df}

This definition tells us that a compound Poisson random variable with values in a crystal lattice 
can be generated by each element 
in $\mathcal{Z}_{f\!E}^{X, \Phi}$ through a non-degenerate periodic realization $\Phi$. Moreover, 
the L\'evy measure $\nu_{\vec{\sigma}}^X(dx)$ on $V$ 
corresponding to $\mathcal{Y}_{\vsg}$ is heuristically written as
$$
\nu_{\vec{\sigma}}^X(dx)=\sum_{\ell=1}^m \frac{1}{r}\alpha_\ell^r \e^{-r \la \vec{a}_\ell, \vec{\sigma}\ra}
\delta_{\gamma(\ell)^r}(dx).
$$ 
Here, for a vector $\vec{a}_\ell \in \mathcal{J}, \, \ell=1, 2, \dots, m$, there exist 
$\gamma_{{\ell_1}}, \gamma_{{\ell_2}}, \dots, \gamma_{{\ell_{k_\ell}}} \in \mathcal{S}$ such that 
$$
\va_\ell=\ve_{1} \gamma_{\ell_1} + \ve_{2}\gamma_{\ell_2} 
\cdots +\ve_{k_\ell} \gamma_{\ell_{k_\ell}},\quad
\ve_{j}=\pm 1, \, j=1, 2, \dots, k_\ell,
$$
and we put 
$\gamma(\ell)=\gamma_{\ell_1}^{\ve_1}  \gamma_{\ell_2}^{\ve_{2}}
\cdots  \gamma_{\ell_{k_\ell}}^{\ve_{k_\ell}} \in \Gamma$.

The next theorem provides a necessary and sufficient condition for a given random variable 
$\mathcal{X}$ with values in the set $\{r \vec{a}_\ell \, | \, r \in \mathbb{N}, \, \ell=1, 2, \dots, m\}$
to generate a compound Poisson random variable on $X$
under {\bf (LI)}. 

\begin{tm}
Suppose that $m$ vectors $\va_1, \va_2, \dots, \va_m \in \mathcal{J}$ satisfy 
{\bf (LI)}. 
Let $\mathcal{X}_1, \mathcal{X}_2, \dots$ be $\mathbb{R}^d$-valued 
independent and identically distributed 
random variables with
$$
{\bf{P}}(\mathcal{X}_1=r\va_\ell)=\beta(r, \ell), \qquad r \in \mathbb{N}, \, \ell=1, 2, \dots, m,
$$
where $\beta(r, \ell), \, r \in \mathbb{N}, \, \ell=1, 2, \dots, m$, 
are nonnegative real numbers satisfying
\begin{equation}\label{sum=1}
\sum_{r=1}^\infty\sum_{\ell=1}^m \beta(r, \ell)=1. 
\end{equation}
Then a $V$-valued random variable $\mathcal{Y}$ is a
compound Poisson random variable on $X$
generated by $Z_{f\!E}^{X, \Phi} \in \mathcal{Z}_{f\!E}^{X, \Phi}$, that is, each $\beta(r, \ell)$
is given by the right-hand side of \eqref{FZ-CP-1step-rv} and Equation
\eqref{FZ-CP} holds for some $\vec{\sigma} \in \mathbb{R}^d$ with 
$\min\{\la \vec{a}_\ell, \vec{\sigma}\ra \, | \, \ell=1, 2, \dots, m\}>0$ and some
Poisson random variable $K_{\vec{\sigma}}^{X, \Phi}$ 
independent of $\{\mathcal{X}_i\}_{i=1}^\infty$ if and only if the sequence 
$\{r\beta(r, \ell)\}_{r=1}^{\infty}$ is a geometric sequence 
with the common ratio $0<A_\ell<1$ 
for $\ell=1, 2, \dots, m$. 
\end{tm}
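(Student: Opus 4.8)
The plan is to prove the two implications separately: the ``only if'' direction is a direct computation, and the ``if'' direction is a construction that exploits \textbf{(LI)}. For the ``only if'' direction, suppose $\mathcal{Y}$ is compound Poisson generated by $Z_{f\!E}^{X,\Phi}$, so that by \eqref{FZ-CP-1step-rv} each weight reads
$$
\beta(r,\ell)=\frac{1}{N_{\vsg}^{X,\Phi}(\mathbb{R}^d)}\cdot\frac{\alpha_\ell^r\e^{-r\la\va_\ell,\vsg\ra}}{r}
$$
for some $\alpha_\ell\in(0,1]$ and some $\vsg$ with $\la\va_\ell,\vsg\ra>0$. Multiplying by $r$ gives $r\beta(r,\ell)=N_{\vsg}^{X,\Phi}(\mathbb{R}^d)^{-1}\big(\alpha_\ell\e^{-\la\va_\ell,\vsg\ra}\big)^r$, so with $A_\ell:=\alpha_\ell\e^{-\la\va_\ell,\vsg\ra}$ the sequence $\{r\beta(r,\ell)\}_r$ is geometric with ratio $A_\ell$; moreover $0<A_\ell<1$ because $0<\alpha_\ell\le1$ and $\la\va_\ell,\vsg\ra>0$. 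This settles necessity.

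For the ``if'' direction I would first read off the data: writing $\{r\beta(r,\ell)\}_r$ as a geometric sequence with ratio $A_\ell\in(0,1)$ yields $\beta(r,\ell)=C_\ell A_\ell^r/r$ with $C_\ell=\beta(1,\ell)/A_\ell>0$. To realize these weights through \eqref{FZ-CP-1step-rv}, I would take $\alpha_\ell=1$ for every $\ell$ and invoke \textbf{(LI)}: since $\va_1,\dots,\va_m$ are linearly independent, the map $\vsg\mapsto(\la\va_1,\vsg\ra,\dots,\la\va_m,\vsg\ra)$ is onto $\mathbb{R}^m$, so there exists $\vsg\in\mathbb{R}^d$ with $\la\va_\ell,\vsg\ra=-\log A_\ell>0$ for all $\ell$. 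Then $\alpha_\ell\e^{-\la\va_\ell,\vsg\ra}=A_\ell$, and the total mass computed in the proof of Theorem \ref{New-zeta-1} is $N_{\vsg}^{X,\Phi}(\mathbb{R}^d)=\sum_{\ell=1}^m\big(-\log(1-A_\ell)\big)=:\lambda$, so \eqref{FZ-CP-1step-rv} produces exactly the weights $\lambda^{-1}A_\ell^r/r$. Theorem \ref{New-zeta-1} then identifies these with the jump law of a genuine compound Poisson variable on $X$, the Poisson intensity being $\lambda$.

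The main obstacle is matching the constructed weights $\lambda^{-1}A_\ell^r/r$ with the given $C_\ell A_\ell^r/r$: this forces $C_\ell=1/\lambda$ for every $\ell$, i.e.\ the $m$ geometric sequences must share a single leading coefficient, equal to the reciprocal of $\lambda=\sum_\ell(-\log(1-A_\ell))$. The common ratios alone do not deliver this, so the normalization \eqref{sum=1} must carry the decisive part of the argument: summing $\beta(r,\ell)=C_\ell A_\ell^r/r$ over $r$ and $\ell$ gives $\sum_\ell C_\ell\big(-\log(1-A_\ell)\big)=1$, which pins the common value of the $C_\ell$ to $1/\lambda$ exactly once they are known to coincide. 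I would therefore expect the heart of the reverse implication to be establishing this compatibility between the first terms $\beta(1,\ell)$ and the ratios $A_\ell$; once it is secured, the construction above together with Theorem \ref{New-zeta-1} closes the proof, with $\vsg$ chosen by \textbf{(LI)} and the Poisson intensity taken to be $\lambda$.
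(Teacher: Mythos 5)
Your ``only if'' half is correct and is exactly the paper's argument: from \eqref{FZ-CP-1step-rv} one reads off $r\beta(r,\ell)=N_{\vsg}^{X,\Phi}(\mathbb{R}^d)^{-1}\big(\alpha_\ell\e^{-\la\va_\ell,\vsg\ra}\big)^{r}$, which is geometric with ratio $A_\ell=\alpha_\ell\e^{-\la\va_\ell,\vsg\ra}\in(0,1)$.

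The ``if'' half, however, is not a proof: you reduce the converse to the claim that the leading coefficients $C_\ell=\beta(1,\ell)/A_\ell$ coincide for all $\ell$ (and then equal $1/\lambda$ with $\lambda=\sum_{\ell}\big(-\log(1-A_\ell)\big)$), and you explicitly leave that claim unproven, hoping that the normalization \eqref{sum=1} forces it. It does not: \eqref{sum=1} yields the single scalar equation $\sum_{\ell}C_\ell\big(-\log(1-A_\ell)\big)=1$, which says nothing about the individual $C_\ell$ once $m\ge 2$. Concretely, take $m=2$, $\va_1,\va_2$ linearly independent, $A_1=A_2=1-\e^{-1}$, $C_1=1/4$, $C_2=3/4$: then $\beta(r,\ell)=C_\ell A_\ell^{r}/r$ is a genuine probability distribution satisfying every hypothesis of the converse, yet it can never equal the right-hand side of \eqref{FZ-CP-1step-rv}. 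Indeed, matching laws forces $\alpha_\ell\e^{-\la\va_\ell,\vsg\ra}=A_\ell$, hence $N_{\vsg}^{X,\Phi}(\mathbb{R}^d)=\sum_{\ell}\big(-\log(1-A_\ell)\big)$ is determined by the ratios alone, no matter how $\alpha_\ell$ and $\vsg$ are chosen; so the prefactor $1/N_{\vsg}^{X,\Phi}(\mathbb{R}^d)$ is the same for every $\ell$, while your $C_\ell$ are not. The compatibility you flagged is therefore a genuine additional constraint on the data, not a consequence of the hypotheses, and no choice of $\alpha_\ell$, $\vsg$, or Poisson parameter can repair it.

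For what it is worth, your diagnosis is sharper than the paper's own treatment of this direction. The paper chooses $\alpha_\ell, t_\ell$ with $\alpha_\ell\e^{-t_\ell}=A_\ell$, solves $\la\va_\ell,\vsg\ra=t_\ell$ using {\bf (LI)} (your choice $\alpha_\ell=1$, $t_\ell=-\log A_\ell$ is a special case), rewrites $\beta(r,\ell)=\frac{B_\ell\e^{t_\ell}}{\alpha_\ell}\cdot\frac{\alpha_\ell^{r}\e^{-rt_\ell}}{r}$, and stops there --- it never verifies that the $\ell$-dependent prefactor $B_\ell\e^{t_\ell}/\alpha_\ell=C_\ell$ equals the $\ell$-independent constant $1/N_{\vsg}^{X,\Phi}(\mathbb{R}^d)$, which is precisely the step you could not (and, by the example above, cannot) supply. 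The converse as stated holds only under the extra hypothesis that the $m$ geometric sequences share a common leading coefficient; once that is assumed, \eqref{sum=1} pins the common value to $1/\lambda$ and your construction together with Theorem \ref{New-zeta-1} does complete the proof.
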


\begin{proof}
Suppose that $\mathcal{Y}$ is a
compound Poisson random variable on $X$. Then we have
$$
r\beta(r, \ell)=\frac{1}{N_{\vec{\sigma}}^{X, \Phi}(\mathbb{R}^d)}
\cdot \Big(\frac{\alpha_\ell}{\e^{\la \va_\ell, \vsg\ra}}\Big)^r, 
\qquad r \in \mathbb{N}, \, \ell=1, 2, \dots, m,
$$
which implies that each sequence $\{r\beta(r, \ell)\}_{r=1}^\infty,\, \ell=1, 2, \dots, m$, 
is a geometric sequence. 
Moreover, its common ratio is estimated as 
$$
\Big|\frac{\alpha_\ell}{\e^{\la \va_\ell, \vsg\ra}}\Big| \leq \frac{1}{\e^{\la \va_\ell, \vsg\ra}} < 1,
\qquad \ell=1, 2, \dots, m,
$$
due to $\min\{\la \vec{a}_\ell, \vec{\sigma}\ra \, | \, \ell=1, 2, \dots, m\}>0$. 
Conversely, we assume that each positive real number
$\beta(r, \ell), \, r \in \mathbb{N},\, \ell=1, 2, \dots, m$, is given by
$$
r\beta(r, \ell)=B_\ell \cdot A_\ell^{r-1}, \qquad r \in \mathbb{N}, \, \ell=1, 2, \dots, m,
$$
for some $B_\ell >0$ and some $A_\ell$ with $0 < A_\ell < 1$.
Then we can choose $0<\alpha_\ell \leq 1$ and $t_\ell > 1$ such that 
$$
0<\frac{\alpha_\ell}{\e^{t_\ell}} = A_\ell <1
$$
for $\ell=1, 2, \dots, m$. 
Let us consider a system of linear equations 
$$
\la \va_1, \vec{x} \ra=t_1, \, \la \va_2, \vec{x} \ra=t_2, \dots,
\la \va_m, \vec{x} \ra=t_m.
$$
Since $\va_1, \va_2, \dots, \va_m$ are linearly independent by {\bf (LI)}, 
the above system has a unique solution $\vec{\sigma} \in \mathbb{R}^d$ which satisfies that 
$\min\{\la \va_\ell, \vec{\sigma} \ra \, | \, \ell=1, 2, \dots, m\}>0$. Then we have 
$$
\beta(r, \ell)=\frac{1}{r}B_\ell \cdot A_\ell^{r-1}=\frac{B_\ell  \e^{\la \va_\ell, \vsg\ra}}{\alpha_\ell} \cdot
\frac{\alpha_\ell^r \e^{-r\la \va_\ell, \vsg\ra}}{r}, \qquad r \in \mathbb{N}, \, \ell=1, 2, \dots, m.
$$
Thus, the proof is completed 
by taking a Poisson random variable $K$ with a parameter 
$N_{\vsg}^{X, \Phi}(\mathbb{R}^d)>0$ independent of $\{\mathcal{X}_n\}_{n=1}^\infty$ and by putting 
$\Phi(\mathcal{Y})=\mathcal{X}_1+\mathcal{X}_2+\cdots+\mathcal{X}_K$. 
\end{proof}

\subsection{{\bf Relation with multidimensional Shintani zeta functions}}

As the Riemann zeta function \eqref{Euler} has both the Euler product and 
the Dirichlet series representation, each multidimensional finite Euler product 
$Z_{f\!E}^{X, \Phi} \in \mathcal{Z}_{f\!E}^{X, \Phi}$ is also expected to have
the series representation. 

We recall the following elementary relation between 
coefficients of Euler products and those of Dirichlet series expansions.

\begin{pr}[cf.\,{\cite[Lemma 2.2]{Steuding}}]\label{Steuding's prop}
Let $q \in \mathbb{N}$. Suppose that a function $\mathcal{L}=\mathcal{L}(s)$ is represented as
$$
\mathcal{L}(s)=\sum_{n=1}^\infty \frac{A(n)}{n^s}=\prod_{p \in \mathbb{P}}
\prod_{j=1}^q(1-\alpha_j(p)p^{-s})^{-1}, \qquad \mathrm{Re}\,s>1, 
$$
where $A(n) \in \mathbb{C}, \, n \in \mathbb{N},$ and 
$\alpha_j(p) \in \mathbb{C}, \, j=1, 2, \dots, q, \, p \in \mathbb{P}$. 
Then $A(n)$ is multiplicative, that is, $A(mn)=A(m)A(n)$ for 
mutually prime $m, n \in \mathbb{N}$, and 
$$
A(n)=\prod_{p|n}\sum_{\substack{0 \le \theta_1, \theta_2, \dots, \theta_q 
\\ \theta_1+\theta_2+\cdots+\theta_q=\nu(n; \, p)}}\prod_{j=1}^q
\alpha_j(p)^{\theta_{j}}, \qquad n \in \mathbb{N},
$$
where $\nu(n; \, p)$ denotes the exponent of $p \in \mathbb{P}$
in the prime factorization of $n \in \mathbb{N}$. 
\end{pr}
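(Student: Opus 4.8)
The plan is to expand the Euler product into a Dirichlet series one prime at a time, and then to recombine using unique factorization together with the uniqueness of Dirichlet coefficients. First I would fix a prime $p$ and expand each of the $q$ local factors as a geometric series, which is legitimate in the region $\mathrm{Re}\,s>1$ where the representation is assumed to hold absolutely. Writing $(1-\alpha_j(p)p^{-s})^{-1}=\sum_{\theta_j=0}^\infty \alpha_j(p)^{\theta_j}p^{-\theta_j s}$ and multiplying the $q$ such series, the local factor at $p$ becomes $\sum_{k=0}^\infty A_p(k)\,p^{-ks}$, where $A_p(k):=\sum_{\theta_1+\cdots+\theta_q=k}\prod_{j=1}^q \alpha_j(p)^{\theta_j}$ collects the monomials of total degree $k$. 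Note in particular that $A_p(0)=1$, since the empty combination is the only one of degree $0$.

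Next I would multiply these local expansions over all primes. By the fundamental theorem of arithmetic every $n\in\mathbb{N}$ is written uniquely as $\prod_{p\mid n}p^{\nu(n;p)}$, so selecting the term $A_p(\nu(n;p))p^{-\nu(n;p)s}$ from the factor at each $p\mid n$ and the constant term $A_p(0)=1$ from all remaining primes contributes $n^{-s}$ exactly once, with coefficient $\prod_{p\mid n}A_p(\nu(n;p))$. Collecting,
\[
\prod_{p\in\mathbb{P}}\sum_{k=0}^\infty A_p(k)\,p^{-ks}=\sum_{n=1}^\infty\Big(\prod_{p\mid n}A_p(\nu(n;p))\Big)n^{-s}.
\]
Comparing with the hypothesized expansion $\mathcal{L}(s)=\sum_{n=1}^\infty A(n)n^{-s}$ and invoking the uniqueness of Dirichlet series coefficients, valid on the half-plane of absolute convergence, I identify $A(n)=\prod_{p\mid n}A_p(\nu(n;p))$, which is precisely the asserted closed formula.

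Finally, multiplicativity falls out of this product representation. If $m,n\in\mathbb{N}$ are coprime, the primes dividing $m$ and those dividing $n$ are disjoint, and for $p\mid mn$ the exponent $\nu(mn;p)$ equals $\nu(m;p)$ or $\nu(n;p)$ according to which of $m,n$ is divisible by $p$; hence the product over $p\mid mn$ factors as $\big(\prod_{p\mid m}A_p(\nu(m;p))\big)\big(\prod_{p\mid n}A_p(\nu(n;p))\big)=A(m)A(n)$. The main obstacle is solely the rigorous justification of the rearrangements: interchanging the product over primes with the infinite geometric sums, and asserting uniqueness of the Dirichlet coefficients, both rest on absolute convergence. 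I would secure this directly from the standing hypothesis that the Euler product and the Dirichlet series both represent $\mathcal{L}(s)$ for $\mathrm{Re}\,s>1$; once absolute convergence is in hand, every formal manipulation above is legitimate and the argument is complete.
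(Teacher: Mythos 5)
The paper does not actually prove this proposition: it is quoted as a known result, citing Lemma 2.2 of Steuding's book, so there is no internal proof to compare against. Your argument is correct and is essentially the standard (textbook) proof behind that citation: expand each local factor in a geometric series, multiply the $q$ absolutely convergent series at each fixed prime to get the local coefficients $A_p(k)=\sum_{\theta_1+\cdots+\theta_q=k}\prod_j\alpha_j(p)^{\theta_j}$, recombine over all primes via unique factorization, and identify coefficients by uniqueness of Dirichlet series on the half-plane of absolute convergence; multiplicativity then falls out of the resulting product formula exactly as you say.

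One small point you could make fully explicit rather than folding into ``the representation is assumed to hold absolutely'': the geometric expansion needs $|\alpha_j(p)p^{-s}|<1$ for all $\mathrm{Re}\,s>1$. This follows from the hypothesis itself, since each Euler factor must be finite throughout the half-plane, so $1-\alpha_j(p)p^{-s}\neq 0$ for all $\mathrm{Re}\,s>1$; as $p^{-s}$ ranges over the punctured disc of radius $1/p$ this forces $|\alpha_j(p)|\le p$, hence $|\alpha_j(p)p^{-s}|\le p^{1-\sigma}<1$ for $\sigma>1$. In the paper's actual application the coefficients satisfy $|\alpha_\ell(p)|\le 1$, so this is automatic there.
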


We obtain the following representation by applying the proposition above. 

\begin{pr}
Let $Z_{f\!E}^{X, \Phi}(\vs) \in \mathcal{Z}_{f\!E}^{X, \Phi}$ be of the form \eqref{finite-Euler-1},
where $\min\{\mathrm{Re}\la \va_\ell, \vs \ra\, | \, \ell=1, 2, \dots, m\}>1$. 
Then we obtain
\begin{equation}\label{series representation}
Z_{f\!E}^{X, \Phi}(\vs)=\sum_{k_1, k_2, \dots, k_m=0}
^\infty \Big(\frac{\alpha_1}{\e^{\la \va_1, \vs\ra}}\Big)^{k_1}
\Big(\frac{\alpha_2}{\e^{\la \va_2, \vs\ra}}\Big)^{k_2} \cdots 
\Big(\frac{\alpha_m}{\e^{\la \va_m, \vs\ra}}\Big)^{k_m}
\end{equation}
and the infinite series in \eqref{series representation}
absolutely converges 
if $\min\{\mathrm{Re}\la \va_\ell, \vs \ra\, | \, \ell=1, 2, \dots, m\}>1$. 
In particular, one has $\mathcal{Z}_{f\!E}^{X, \Phi} \subset \mathcal{Z}_S^{X, \Phi}$. 
\end{pr}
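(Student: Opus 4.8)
The plan is to prove the three assertions in turn: the series identity \eqref{series representation}, its absolute convergence, and the inclusion $\mathcal{Z}_{f\!E}^{X, \Phi} \subset \mathcal{Z}_S^{X, \Phi}$. The first two are routine, while the inclusion carries the real content. For the identity, I would expand each of the $m$ factors of the finite product \eqref{finite-Euler-1} as a geometric series. Since $-1 \le \alpha_\ell \le 1$ and $\mathrm{Re}\,\la \va_\ell, \vs\ra = \la \va_\ell, \vsg\ra > 1$, one has $|\alpha_\ell \e^{-\la \va_\ell, \vs\ra}| \le \e^{-\mathrm{Re}\la \va_\ell, \vs\ra} < \e^{-1} < 1$, so
$$
\big(1 - \alpha_\ell \e^{-\la \va_\ell, \vs\ra}\big)^{-1} = \sum_{k_\ell = 0}^\infty \Big(\frac{\alpha_\ell}{\e^{\la \va_\ell, \vs\ra}}\Big)^{k_\ell}
$$
converges absolutely for each $\ell$. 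As the product runs over only finitely many indices, multiplying these $m$ series and collecting powers yields precisely \eqref{series representation}.

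To justify that rearrangement and to establish the stated absolute convergence simultaneously, I would bound the sum of absolute values of the terms of \eqref{series representation}. Because the general summand factorizes across $k_1, \dots, k_m$, its absolute series equals
$$
\prod_{\ell=1}^m \sum_{k_\ell=0}^\infty \Big(\frac{|\alpha_\ell|}{\e^{\mathrm{Re}\la \va_\ell, \vs\ra}}\Big)^{k_\ell} = \prod_{\ell=1}^m \big(1 - |\alpha_\ell|\e^{-\mathrm{Re}\la \va_\ell, \vs\ra}\big)^{-1} < \infty,
$$
each factor being finite by the bound above. This finiteness both legitimizes passing from the finite product of series to the single multiple sum and gives absolute convergence in the region $\min_\ell \mathrm{Re}\,\la \va_\ell, \vs\ra > 1$.

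The inclusion is where the main obstacle lies: the denominators in \eqref{New Shintani zeta} are powers of expressions \emph{linear} in the indices, whereas the terms of \eqref{series representation} carry the \emph{exponential} factors $\e^{-k_\ell \la \va_\ell, \vs\ra}$, so the naive choice $\vec{c}_\ell = \va_\ell$ cannot match them. I would resolve this by a logarithmic rescaling combined with a sparsely supported coefficient function. Concretely, fix integers $b_\ell \ge 2$, take $r = m$, $\lambda_{\ell j} = \delta_{\ell j}$, $u_j = 1$, and set $\vec{c}_\ell := (\log b_\ell)^{-1}\va_\ell$. Since $|V_0| = 1$, each $\va_\ell \in \mathcal{J}$ is realized as a path-sum $\sum_k d\Phi(e_k)$ from the lift of the unique vertex $x \in V_0$, so $\va_\ell \in \mathcal{J}(\Phi; x)$; as $\mathcal{J}(\Phi; x)$ is closed under real scaling, $\vec{c}_\ell \in \mathcal{J}(\Phi; x)$ as the definition requires. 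With these choices each denominator factor becomes $(n_\ell + 1)^{\la \vec{c}_\ell, \vs\ra} = \e^{\log_{b_\ell}(n_\ell+1)\,\la \va_\ell, \vs\ra}$, which equals $\e^{k_\ell \la \va_\ell, \vs\ra}$ exactly when $n_\ell + 1 = b_\ell^{k_\ell}$.

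Finally I would define the coefficient function by
$$
\theta(n_1, \dots, n_m) := \begin{cases} \displaystyle\prod_{\ell=1}^m \alpha_\ell^{k_\ell} & \text{if } n_\ell = b_\ell^{k_\ell}-1 \text{ for some } k_\ell \in \mathbb{Z}_{\ge 0},\ \ell=1,\dots,m,\\[2mm] 0 & \text{otherwise,}\end{cases}
$$
which is well defined because $k_\ell \mapsto b_\ell^{k_\ell}-1$ is strictly increasing, so at most one $k_\ell$ can satisfy $n_\ell + 1 = b_\ell^{k_\ell}$. Substituting into \eqref{New Shintani zeta} collapses the sum over $(n_1, \dots, n_m)$ to a sum over $(k_1, \dots, k_m)$ and reproduces \eqref{series representation} term by term. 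It then remains to verify the admissibility conditions: $\lambda_{\ell j} = \delta_{\ell j} \ge 0$ with $\lambda_{\ell\ell} \neq 0$ and $u_j = 1 > 0$ hold by construction, and since $|\alpha_\ell| \le 1$ we have $|\theta| \le 1$, so the growth bound $|\theta(n_1, \dots, n_m)| = O\big((n_1 + \cdots + n_m)^\ve\big)$ holds trivially for every $\ve > 0$. This exhibits $Z_{f\!E}^{X, \Phi}$ as an element of $\mathcal{Z}_S^{X, \Phi}$ and completes the inclusion.
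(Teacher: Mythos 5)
Your proposal is correct, but it reaches the result by a genuinely different route than the paper. The paper first re-embeds $Z_{f\!E}^{X, \Phi}$ into the class $\mathcal{Z}_E^{(d)}$ of full Euler products over all primes: it fixes $m$ distinct primes $p_\ell$, defines the sparse coefficients $\alpha_\ell(p)$ as in \eqref{p-reduction} and the vectors $\vb_\ell=(\log p_\ell)^{-1}\va_\ell$, and then invokes Steuding's coefficient lemma (Proposition \ref{Steuding's prop} with $q=1$) to convert each factor into a Dirichlet series $\sum_{n_\ell\ge 1} a_\ell(n_\ell)\,n_\ell^{-\la \vb_\ell,\vs\ra}$ whose multiplicative coefficients $a_\ell$ are supported on the powers of $p_\ell$; collapsing those supports recovers \eqref{series representation}, and the absolute convergence bound ($\sum_k \e^{-k}=\e/(\e-1)$) is the same as yours. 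You bypass the prime machinery entirely: a direct geometric expansion of each of the $m$ factors of \eqref{finite-Euler-1}, justified by $|\alpha_\ell \e^{-\la\va_\ell,\vs\ra}|<\e^{-1}<1$ and Tonelli-type factorization of the absolute series, gives the identity and the convergence at once. For the inclusion you then build the Shintani data by hand — $r=m$, $\lambda_{\ell j}=\delta_{\ell j}$, $u_j=1$, $\vc_\ell=(\log b_\ell)^{-1}\va_\ell$ with arbitrary integers $b_\ell\ge 2$ (not necessarily prime or distinct), and $\theta$ supported on $n_\ell=b_\ell^{k_\ell}-1$ — and, importantly, you verify the admissibility conditions of \eqref{New Shintani zeta} explicitly, including $\vc_\ell\in\mathcal{J}(\Phi;x)$ via closure of $\mathcal{J}(\Phi;x)$ under real scaling and the trivial growth bound on $\theta$; the paper leaves this last verification implicit, merely noting that $a_\ell$ is multiplicative with $|a_\ell(n)|=O(n^{\ve})$. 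What each approach buys: the paper's argument recycles general machinery relating Euler products to Dirichlet series (and so adapts to genuinely infinite products over primes), while yours is elementary, self-contained, and makes the class membership airtight — structurally both land on the same Shintani representation, yours being the special case where the "prime bases" are replaced by arbitrary integer bases.
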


\noindent
{\bf Proof.} By taking $m$ distinct $p_1, p_2, \dots, p_m \in \mathbb{P}$, one can write
$$
Z_{f\!E}^{X, \Phi}(\vs)=\prod_{p \in \mathbb{P}}\prod_{\ell=1}^m
 (1-\alpha_\ell(p) p^{-\la \vec{b}_\ell, \vs\ra})^{-1}, 
$$
where $\alpha_\ell(p) \in \mathbb{C}$ is given by \eqref{p-reduction} and we put 
$\vec{b}_\ell=(\log p_\ell)^{-1}\va_\ell, \, \ell=1, 2, \dots, m$. 
Then Proposition \ref{Steuding's prop} in the case where $q=1$ yields
\begin{equation}\label{Eq1}
\prod_{p \in \mathbb{P}}
 (1-\alpha_\ell(p) p^{-\la \vec{b}_\ell, \vs\ra})^{-1}
 =\prod_{p \in \mathbb{P}}\Big(1+\sum_{r=1}^\infty \alpha_\ell(p)^r p^{-r\la \vb_\ell, \vs\ra}\Big)
 =\sum_{n_\ell=1}^\infty \frac{a_\ell(n_\ell)}{n_\ell^{\la \vb_\ell, \vs\ra}},
\end{equation}
for $\ell=1, 2, \dots, m$, 
where the coefficient $a_\ell(n_\ell), \, \ell=1, 2, \dots, m$ is given by
$$
a_\ell(n_\ell)=\prod_{p|n_\ell}\alpha_\ell(p)^{\nu(n_\ell; \, p)}, \qquad n_\ell \in \mathbb{N}.
$$
On the other hand, it follows from \eqref{p-reduction} that 
\begin{equation}\label{Eq2}
a_\ell(n_\ell)=\begin{cases}
\alpha_\ell^{k_\ell} & \text{if }n_\ell=p_\ell^{k_\ell}, \, k_\ell =0, 1, 2, \dots\\
0 & \text{otherwise}
\end{cases},
\end{equation}
where we understand $0^0=1$ if needed. 
It is obvious that the function $a_\ell, \, \ell=1, 2, \dots, m$, is multiplicative
with $|a_\ell(n_\ell)|=O(n_\ell^\ve)$ for any $\ve>0$ by definition. 
Then we obtain
$$
\begin{aligned}
Z_{f\!E}^{X, \Phi}(\vs)&=\prod_{\ell=1}^m\sum_{k_\ell=0}^\infty
\frac{a_\ell(p_\ell^{k_\ell})}{p_\ell^{k_\ell \la \vb_\ell, \vs\ra}}
=\prod_{\ell=1}^m\sum_{k_\ell=0}^\infty \frac{\alpha_\ell^{k_\ell}}{\e^{k_\ell \la \va_\ell, \vs \ra}}\\
&=\sum_{k_1, k_2, \dots, k_m=0}^\infty \Big(\frac{\alpha_1}{\e^{\la \va_1, \vs\ra}}\Big)^{k_1}
\Big(\frac{\alpha_2}{\e^{\la \va_2, \vs\ra}}\Big)^{k_2} \cdots 
\Big(\frac{\alpha_m}{\e^{\la \va_m, \vs\ra}}\Big)^{k_m}
\end{aligned}
$$
by combining \eqref{Eq1} with \eqref{Eq2}. Moreover, in view of 
$|\alpha_\ell| \leq 1, \, \ell=1, 2, \dots, m$, and 
$\min\{\mathrm{Re}\la \va_\ell, \vs \ra\, | \, \ell=1, 2, \dots, m\}>1$, one has
$$
\sum_{k_\ell=0}^\infty \Big|\frac{\alpha_\ell}{\e^{\la \va_\ell, \vs\ra}}\Big|^{k_\ell}
\leq \sum_{k_\ell=0}^\infty \frac{1}{\e^{k_\ell\la \va_\ell, \vs\ra}}
\leq \sum_{k_\ell=0}^\infty \frac{1}{\e^{k_\ell}} =\frac{\e}{\e-1}<+\infty
$$
for $\ell=1, 2, \dots, m$, which implies that the infinite series in \eqref{series representation} 
is absolutely convergent 
when $\min\{\mathrm{Re}\la \va_\ell, \vs \ra\, | \, \ell=1, 2, \dots, m\}>1$. 
This completes the proof. \qed

\subsection{Infinite range random walks generated by multidimensional finite Euler products}
We define random walks on crystal lattices whose range
is infinite in this subsection. 
Compared with the finite range cases, the random walks 
on crystal lattices have not been studied so much due to 
the fact that there have been few treatable multivariate 
functions corresponding to Fourier transformations of 
such random walks.
We define a class of certain infinite range random walks
on crystal lattices generated by multidimensional finite Euler 
products. We take 
$\va_1, \va_2, \dots, \va_m \in \mathcal{J}$ and $\vsg \in \mathbb{R}^d$. 
Let $x_* \in V$ be an origin of $X$.
Then, we define a infinite range random walk on $X$ starting from $x_*$
which is generated by multidimensional finite Euler products 
as in the following.

\begin{df}[infinite range random walks on $X$
generated by a multidimensional finite Euler product]
Suppose that $m$ vectors 
$\va_1, \va_2, \dots, \va_m$ satisfy {\bf (LI)} and $\vsg$ satisfies 
$\min\{\la \va_\ell, \vsg\ra| \, \ell=1, 2, \dots, m\}>0$.
Let $Z_{f\!E}^{X, \Phi} \in \mathcal{Z}_{f\!E}^{X, \Phi}$ and 
$\mathcal{Y}_{\vsg}$ be a compound Poisson random variable on $X$
generated by $Z_{f\!E}^{X, \Phi}$. 
Then, a sequence of $V$-valued independent random variables $\{\mathcal{W}_n\}_{n=0}^\infty$ is called an 
infinite range random walk 
generated by a multidimensional finite Euler product on $X$ if 
$\mathcal{W}_0=x_*$ a.s.\,and the characteristic function
of 
$\Phi(\mathcal{W}_n), \, n \in \mathbb{N}$, is given by 
$$
{\bf{E}}[\e^{\ii \la \vt, \Phi(\mathcal{W}_n) \ra}]
=
\big(f_{\vsg}^{X, \Phi}(\vt)\big)^n, \qquad t \in \mathbb{R}^d. 
$$
\end{df}

Note that our random walk in this case is also defined by its characteristic function as in Definition \ref{Def:RW-Shintani}.
Similarly, it has an independent increments but they do not depend on which $x\in V$ they are at.
So that it seems to be more simply defined compared to the previous case.
The Euler product representation makes us visible whether they are compound Poisson (i.e., infinitely divisible) or not and obtain their L\'evy measures easily when they are to be so.

\section{{\bf Examples}}
\label{Sec:Example}
We give some comprehensible examples of crystal lattices and multiple zeta functions on them in this section. Moreover, random walks on crystal lattices generated by these zeta functions are also given. 
We start with the case where $d=1$. 

\begin{ex}\normalfont
Let $\Gamma=\mathbb{Z}=\la \gamma \ra$, $V=\mathbb{Z}$ and 
$E=\{e=\{x, y\} \in \mathbb{Z} \times \mathbb{Z} \, | \, y-x=\pm1\}.$
The $\Gamma$-action on $X=(V, E)$ is given by 
$\gamma x:=x+1$ for $x \in V$. 
Then one can see that the graph $X$ is a $\Gamma$-covering graph of a
{\rm 1}-bouquet graph $X_0=(V_0, E_0)$, where $V_0=\{\x\}$ and 
$E_0=\{e, \ol{e}\}$ (see Figure \ref{line}).

\begin{figure}[ht]
\begin{center}
\includegraphics[width=9cm]{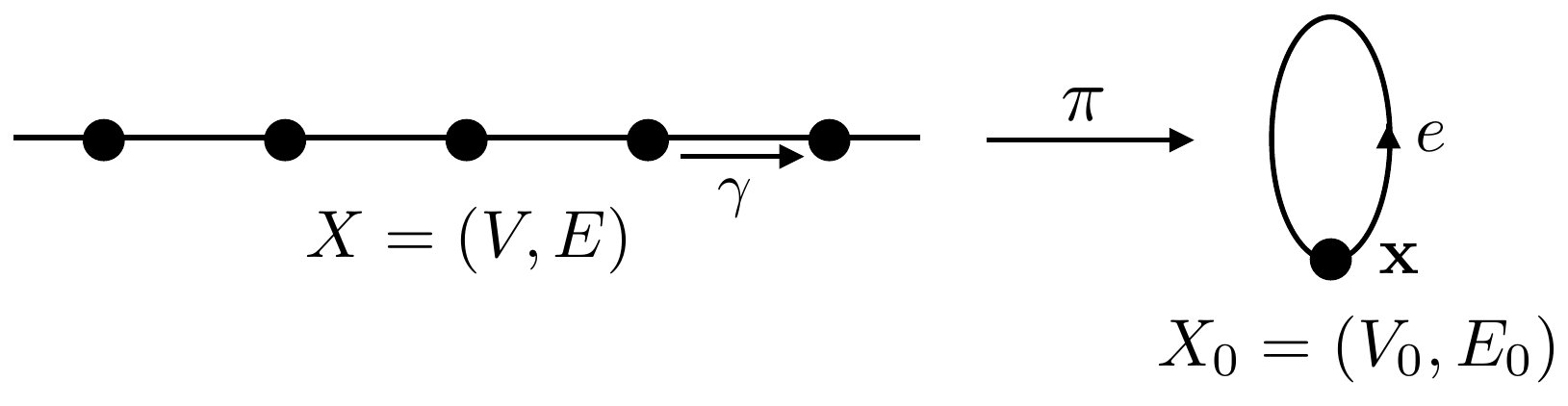}
\end{center}
\caption{the 1-dimensional crystal lattice and its quotient graph.}
\label{line}
\end{figure}

A periodic realization $\Phi : V \to \mathbb{R}$ is naturally defined by
$\Phi(0)=0$ and $\Phi(1)=1$. 
We identify $\gamma \in \Gamma$ with $1 \in \mathbb{R}$ 
and put 
$\mathcal{J}=\{c=ka \, | \, k \in \mathbb{R}, \, a \in \Phi(V)=\mathbb{Z}\}$.

Let $m=r=2$, $\lambda_{\ell j}=\delta_{\ell j}, \, u_j=1, \, \ell, j=1, 2$
and $c_1, c_2 \in \mathcal{J}$. We consider a multidimensional Shintani zeta function defined by 
$$
Z_S^{X, \Phi}(s):=\sum_{n_1, n_2=0}^\infty 
\frac{\theta(n_1, n_2)}{(n_1+1)^{c_1s}(n_2+1)^{c_2s}}, 
\qquad s \in \mathbb{C}. 
$$

\vspace{1mm}
\noindent
{\bf (i)} 
Let $j_1, j_2$ be distinct positive integers greater than 1. 
Suppose that $c_1=-(\log j_1)^{-1}$, $c_2=(\log j_2)^{-1}$ and 
$$
\theta(n_1, n_2)=\begin{cases}
\alpha \e^{-\sigma} & \text{if }(n_1, n_2)=(j_1-1, 0) \\
\beta \e^{\sigma} & \text{if }(n_1, n_2)=(0, j_2-1) \\
0 & \text{otherwise}
\end{cases}, 
$$
where $\alpha, \beta \ge 0$, $\alpha+\beta=1$ 
and $\sigma>1$.
Then we have
$$
Z_S^{X, \Phi}(s)=\frac{\alpha \e^{-\sigma}}
{j_1^{-(\log j_1)^{-1}s}}+
\frac{\beta \e^{\sigma}}{j_2^{(\log j_2)^{-1}s}}
=\alpha \e^{s-\sigma}+\beta \e^{-s+\sigma}
$$
Therefore, the characteristic function $f_\sigma^{X, \Phi}(t)$ is given by
$$
f_\sigma^{X, \Phi}(t)=
\frac{Z_S^{X, \Phi}(\sigma+\ii t)}{Z_S^{X, \Phi}(\sigma)}
=\frac{\alpha \e^{\ii t}+\beta\e^{-\ii t}}{\alpha+\beta}
=\alpha\e^{\ii t}+\beta \e^{-\ii t}, \qquad t \in \mathbb{R},
$$
which is that of $\mathbb{Z}$-valued random variable $\mathcal{Y}$
given by ${\bf P}(\mathcal{Y}=1)=\alpha$ 
and ${\bf P}(\mathcal{Y}=-1)=\beta$. 
This clearly induces the usual nearest-neighbor random walk on $\mathbb{Z}$ and also simple one when $\alpha=\beta=1/2$. 
Note that the random variable $\mathcal{Y}$ is not infinitely divisible. 

\vspace{2mm}
\noindent
{\bf (ii)} Let $\lambda>0$ and $j>1$ be an integer. 
Suppose that $c_1=-(\log j)^{-1}$, $c_2=0$ and 
$$
\theta(n_1, n_2)=\begin{cases}
1 & \text{if }(n_1, n_2)=(0, 0) \\
\lambda^k \e^{-k\sigma}/k! & \text{if }(n_1, n_2)=(j^k-1, 0), 
\, k=1, 2, \dots \\
0 & \text{otherwise}
\end{cases}, 
$$
where $\sigma>1$. Then we have
$$
Z_S^{X, \Phi}(s)=\sum_{k=0}^\infty 
\frac{\lambda^k \e^{-k\sigma}/k!}{(j^k)^{-(\log j)^{-1}s}}
=\sum_{k=0}^\infty \frac{(\lambda \e^{s-\sigma})^k}{k!}
=\exp(\lambda \e^{s-\sigma}). 
$$
Therefore, we obtain
$$
f_\sigma^{X, \Phi}(t)=
\frac{Z_S^{X, \Phi}(\sigma+\ii t)}{Z_S^{X, \Phi}(\sigma)}
=\frac{\exp(\lambda \e^{\ii t})}{\exp(\lambda)}
=\exp\big(\lambda(\e^{\ii t}-1)\big), \qquad t \in \mathbb{R},
$$
which corresponds to the usual Poisson random variable $\mathcal{Y}$
given by 
${\bf P}(\mathcal{Y}=k)=\e^{-\lambda}\lambda^k/k!$
for $k=0, 1, 2, \dots$. 
Note that the random variable $\mathcal{Y}$ is infinitely divisible. 
\end{ex}


In the following, we discuss three typical examples of 
2-dimensional crystal lattices and multiple zeta functions on them. 

\begin{ex}[square lattice]\normalfont
Let $d=2$ and $\Gamma= \la \gamma_1, \gamma_2 \ra$. 
We define $V=\mathbb{Z}^2$ and 
$$
E:=\big\{e=\{x, y\} \in \mathbb{Z}^2 \times \mathbb{Z}^2 \, | \, 
y-x \in \{(\pm 1, 0), (0, \pm 1)\}\big\}.
$$
The $\Gamma$-action on $X=(V, E)$ is given by 
$\gamma_1x:=x+(1, 0)$ and 
$\gamma_2x:=x+(0, 1)$ for $x \in V$. 
Then we easily see that the infinite graph $X$
is a $\Gamma$-covering of a 2-{\it bouquet} graph 
$X_0=(V_0, E_0)$, where $X_0=\{\bf{x}\}$ and $E_0=\{e_1, e_2, \ol{e}_1, \ol{e}_2\}$
(see Figure \ref{square}). 
The first homology group $\h_1(X_0, \mathbb{R})$
is given by $\{a_1[e_1]+a_2[e_2] \, | \, a_1, a_2 \in \mathbb{R}\},$ where $[e_1]$ and $[e_2]$ are 
homotopy classes of $e_1$ and $e_2$, respectively. 
Therefore, $X$ is maximal abelian. 

\begin{figure}[ht]
\begin{center}
\includegraphics[width=9cm]{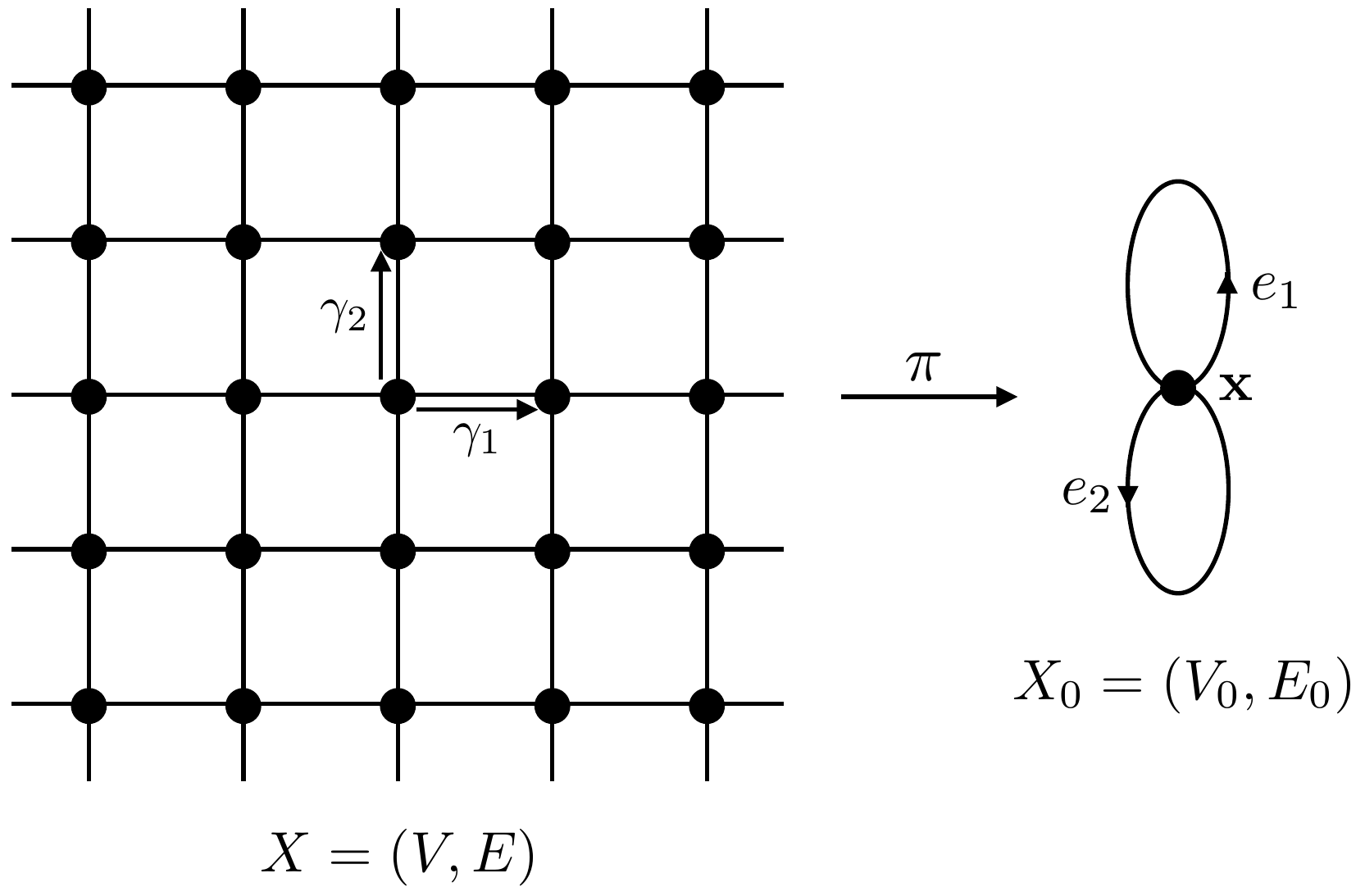}
\end{center}
\caption{the 2-dimensional square lattice and its quotient graph.}
\label{square}
\end{figure}

A periodic realization $\Phi : V \to \mathbb{R}^2$ is defined by
$\Phi\big((0, 0)\big)=(0, 0)$ and 
$$
\Phi\big((1, 0)\big)=(1, 0), \qquad \Phi\big((0, 1)\big)=(0, 1).
$$ 
We identify $\gamma_1, \gamma_2 \in \Gamma$
as vectors $(1, 0), \, (0, 1) \in \mathbb{R}^2$, respectively. 
We also define the sets of vectors $\mathcal{J}_S$ and $\mathcal{J}_E$ by
$$
\mathcal{J}_S=\{\vc=k\va \, | \, k \in \mathbb{R}, \, \va \in \mathbb{Z}^2\}, \quad
\mathcal{J}_E=\big\{\va= k_1\gamma_1+k_2\gamma_2\, \big| \, k_1, k_2 \in \mathbb{Z}\big\}. 
$$

We consider a multidimensional Shintani zeta function and
a multidimensional finite Euler product on $X$ 
given by 
\begin{align}
Z_S^{X, \Phi}(\vs)&:=\sum_{n_1, n_2, n_3, n_4=0}^\infty
\frac{\theta(n_1, n_2, n_3, n_4)}
{\prod_{\ell=1}^4 \big(\sum_{j=1}^4 \lambda_{\ell j}(n_j+u_j)\big)^{\la \vc_\ell, \vs\ra}}, \qquad \vs \in \mathbb{C}^2, 
\label{square Shintani}\\
Z_{f\!E}^{X, \Phi}(\vs)&:=(1-\alpha_1 \e^{-\la \va_1, \vs \ra})^{-1}
(1-\alpha_2 \e^{-\la \va_2, \vs \ra})^{-1}, \qquad \vs \in \mathbb{C}^2, 
\label{square Euler}
\end{align}
where $\vc_1, \vc_2, \vc_3, \vc_4 \in \mathcal{J}_S$ and 
$\va_1, \va_2 \in \mathcal{J}_E$. 

\vspace{2mm}
\noindent
{\bf (i)} Let $j_1, j_2, j_3, j_4$ be four distinct positive integers greater than 1. 
Suppose that $\lambda_{\ell j}=\delta_{\ell j}$, $u_j=1$ for $\ell, j=1, 2, 3, 4$, 
$\vc_1=(-(\log j_1)^{-1}, 0)$, 
$\vc_2=((\log j_2)^{-1}, 0)$, 
$\vc_3=(0, -(\log j_3)^{-1})$,
$\vc_4=(0, (\log j_4)^{-1})$ and 
$$
\theta(n_1, n_2, n_3, n_4)=\begin{cases}
\alpha \e^{-\sigma_1} & 
\text{if }(n_1, n_2, n_3, n_4)=(j_1-1, 0, 0, 0) \\
\alpha \e^{\sigma_1} & 
\text{if }(n_1, n_2, n_3, n_4)=(0, j_2-1, 0, 0) \\
\beta \e^{-\sigma_2} & 
\text{if }(n_1, n_2, n_3, n_4)=(0, 0, j_3-1, 0)\\
\beta' \e^{\sigma_2} & 
\text{if }(n_1, n_2, n_3, n_4)=(0, 0, 0, j_4-1)\\
0 & \text{otherwise}
\end{cases}
$$
as in \eqref{square Shintani}, where $\alpha, \alpha', \beta, \beta' \ge 0$, $\alpha+\alpha'+\beta+\beta'=1$
and $\vsg=(\sigma_1, \sigma_2) \in \mathbb{R}^2 \setminus \{\bm{0}\}$. 
Then we have 
$$
\begin{aligned}
Z_S^{X, \Phi}(\vs)&=
\frac{\alpha \e^{-\sigma_1}}{j_1^{\la \vc_1, \vs \ra}}+
\frac{\alpha' \e^{\sigma_1}}{j_2^{\la \vc_2, \vs \ra}}+
\frac{\beta \e^{-\sigma_2}}{j_3^{\la \vc_3, \vs \ra}}+
\frac{\beta' \e^{\sigma_2}}{j_4^{\la \vc_4, \vs \ra}}\\
&=\alpha \e^{\la (1, 0), \vs \ra - \sigma_1}
+\alpha' \e^{\la (-1, 0), \vs \ra + \sigma_1}
+\beta \e^{\la (0, 1), \vs \ra - \sigma_2}
+\beta' \e^{\la (0, -1), \vs \ra - \sigma_2}. 
\end{aligned}
$$
By putting $\vs=\vsg+\ii \vt=(\sigma_1, \sigma_2)+\ii (t_1, t_2)$
for $\vt=(t_1, t_2) \in \mathbb{R}^2$, we obtain
$$
f_{\vsg}^{X, \Phi}(\vt)
=\frac{Z_S^{X, \Phi}(\vsg+\ii \vt)}{Z_S^{X, \Phi}(\vsg)}
=\alpha \e^{\ii t_1}+\alpha'\e^{-\ii t_1}
+\beta \e^{\ii t_2}+\beta'\e^{-\ii t_2}, 
$$
which is a characteristic function of a 
$\mathbb{Z}^2$-valued random variable $\mathcal{Y}$ given by
$$
\begin{aligned}
{\bf P}(\mathcal{Y}=(1, 0))&=\alpha, \quad
&{\bf P}(\mathcal{Y}=(-1, 0))&=\alpha',\\
{\bf P}(\mathcal{Y}=(0, 1))&=\beta, \quad 
&{\bf P}(\mathcal{Y}=(0, -1))&=\beta'. 
\end{aligned}
$$
This random variable induces a $\mathbb{Z}^2$-valued nearest-neighbor random walk
and also a simple one when $\alpha=\alpha'=\beta=\beta'=1/4$. 

\vspace{1mm}
\noindent{\bf (ii)} 
Suppose that $\alpha_1=\alpha_2=1/3$, 
$\va_1=(1, 0)$ and $\va_2=(0, 1)$ as in \eqref{square Euler}. 
Then we have 
$$
Z_{f\!E}^{X, \Phi}(\vs)=\frac{3}{3-\e^{\la (1, 0), \vs \ra}}\cdot
\frac{3}{3-\e^{\la (0, 1), \vs \ra}}, \qquad \vs \in \mathbb{C}^2. 
$$
It readily follows from Theorem \ref{New-zeta-1} that, 
for $\vsg:=(\log 2, \log 2) \in \mathbb{R}^2$, 
the function 
$$
f_{\vsg}^{X, \Phi}(\vs)=
\frac{Z_{f\!E}^{X, \Phi}(\vsg+\ii \vt)}{Z_{f\!E}^{X, \Phi}(\vsg)}
=\frac{1}{(3-2\e^{\ii t_1})(3-2\e^{\ii t_2})}
, \qquad \vt \in \mathbb{R}^2,
$$
is a compound Poisson characteristic function whose 
finite L\'evy measure on $\mathbb{R}^2$ is given by
$$
N_{\vsg}^{X, \Phi}(d\vx)
=\sum_{r=1}^\infty \frac{1}{r}\Big(\frac{2}{3}\Big)^r 
\big(\delta_{(r, 0)}(d\vx)+\delta_{(0, r)}(d\vx)\big). 
$$
Note that we have $N_{\vsg}^{X, \Phi}(\mathbb{R}^2)=2\log{3}$.
This clearly induces a compound Poisson random variable 
$\Phi(\mathcal{Y}_{\vsg})=\sum_{k=1}^{K_{\vsg}^{X, \Phi}}\mathcal{X}_{k}$,
where $\{\mathcal{X}_k\}_{k=1}^\infty$ is a family of 
$\mathbb{R}^2$-valued independent and identically distributed
random variables whose common distribution is given by
$$
{\bf P}\big(\mathcal{X}_1=(r, 0)\big)
={\bf P}\big(\mathcal{X}_1=(0, r)\big)
=\frac{1}{r}\Big(\frac{2}{3}\Big)^r \cdot \frac{1}{2\log 3}, \qquad r \in \mathbb{N},
$$
and $K_{\vsg}^{X, \Phi}$ is a Poisson random variable with a
parameter $2\log{3}$ independent of $\{\mathcal{X}_k\}_{k=1}^\infty$. 
The $n$-fold convolution of the distribution of the random variable $\Phi(\mathcal{Y})$ induces an infinite range random walk $\{\mathcal{W}_n\}_{n=0}^\infty$ generated by $Z_{f\!E}^{X, \Phi}$
whose characteristic function is
$$
\mathbb{E}[\e^{\ii \la \vt, \Phi(\mathcal{W}_n) \ra}]
=\big(f_{\vsg}^{X, \Phi}(\vt)\big)^n
=\frac{1}{(3-2\e^{\ii t_1})^n(3-2\e^{\ii t_2})^n}, \qquad 
n \in \mathbb{N}, \, \vt=(t_1, t_2) \in \mathbb{R}^2, 
$$
with $\Phi(\mathcal{W}_0)=\bm{0}$. 
\end{ex}

\begin{ex}[triangular lattice] \normalfont 
Let $d=2$ and $\Gamma=\la \gamma_1, \gamma_2\ra$. 
We define $V=\mathbb{Z}^2$ and 
$$
E:=\big\{e=\{x, y\} \in \mathbb{Z}^2 \times \mathbb{Z}^2 \, | \,
y - x \in \{(\pm 1, 0), (0, \pm1), (\pm1, \mp1)\}\big\}. 
$$
The $\Gamma$-action on $X=(V, E)$ is given by 
$\gamma_1x:=x+(1, 0)$ and 
$\gamma_2x:=x+(0, 1)$ for $x \in V$. 
Then we see that $X$
is a $\Gamma$-covering of a 3-bouquet graph 
$X_0=(V_0, E_0)$, where $X_0=\{\bf{x}\}$ and 
$E_0=\{e_1, e_2, e_3, \ol{e}_1, \ol{e}_2, \ol{e}_3\}$
(see Figure \ref{triangular}). 
Since the first homology group $\h_1(X_0, \mathbb{R})$
is of dimension 3, it is known that $X$ is not maximal abelian. 

\begin{figure}[htb]
\begin{center}
\includegraphics[width=9cm]{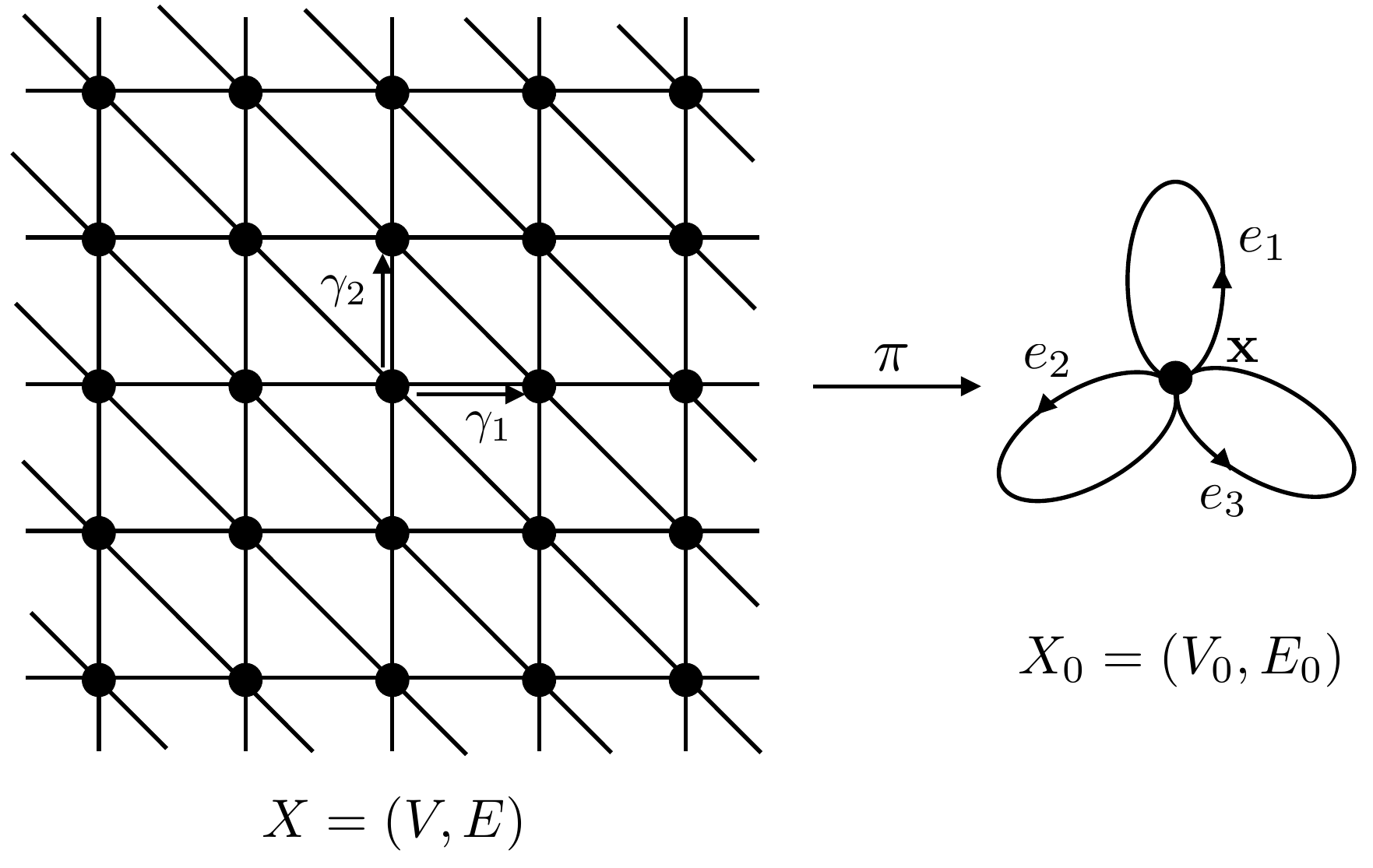}
\end{center}
\caption{the triangular lattice and its quotients graph.}
\label{triangular}
\end{figure}

We take a periodic realization $\Phi : V \to \mathbb{R}^2$
and the set of vectors $\mathcal{J}_S$
as in the previous example. 
Let $N \in \mathbb{N}$ and 
$$
\mathcal{M}:=\{\vk=(k_1, k_2) 
\in \mathbb{Z}^2 \, : \, 
|k_1+k_2|\le N, \, |k_1|, |k_2| \le N\},
$$ 
which is denoted by $\mathcal{M}=\{\vk_\ell\}_{\ell=1}^{M}$
for a notational convention, where 
$M:=|\mathcal{M}|=3N^2+3N+1$. 
See Figure \ref{visiting vertices} below.

\begin{figure}[htb]
\begin{center}
\includegraphics[width=6cm]{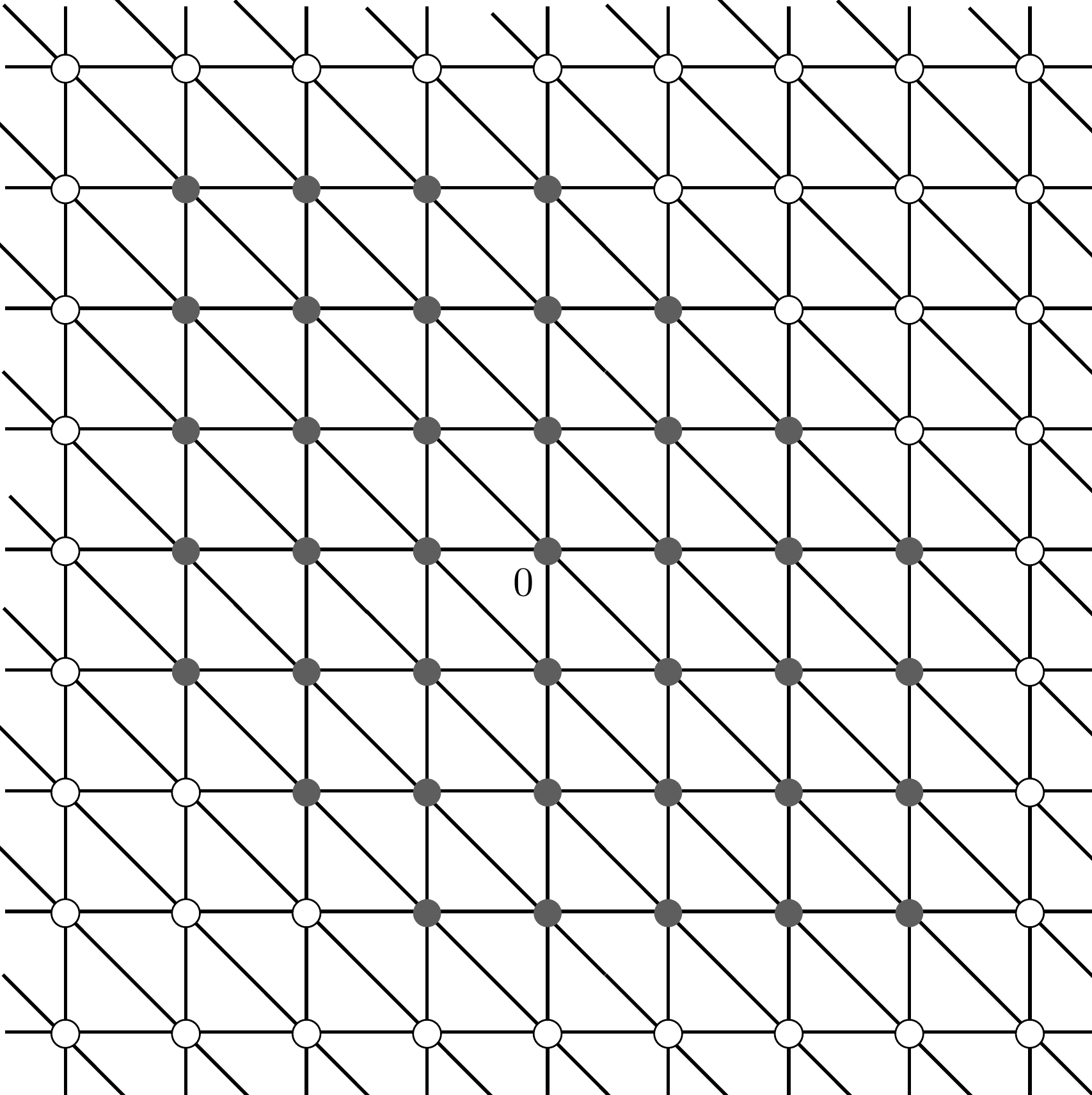}
\end{center}
\caption{the set $\mathcal{M}$ in the case where $N=3$.}
\label{visiting vertices}
\end{figure}

Let us consider a multidimensional Shintani zeta function on $X$ 
given by
\begin{align}
Z_S^{X, \Phi}(\vs)&:=\sum_{n_1, n_2, \dots, n_M=0}^\infty
\frac{\theta(n_1, n_2, \dots, n_M)}
{\prod_{\ell=1}^M (n_\ell+1)^{\la \vc_\ell, \vs\ra}}, \qquad \vs \in \mathbb{C}^2, 
\label{triangular Shintani}
\end{align}
where 
$\vc_\ell \in \mathcal{J}_S$ for $\ell=1, 2, \dots, M$.
Let $\alpha(\vk), \, \vk \in \mathcal{M}$ be 
real numbers taking values in the interval $(0, 1)$ satisfying $\sum_{\vk \in \mathcal{M}}\alpha(\vk)=1$
and $j_\ell, \, \ell=1, 2, \dots, M$, be distinct positive integers greater than 1. 
Suppose that $\vc_\ell=-(\log j_\ell)^{-1}\vk_{\ell},\, \ell=1, 2, \dots, M$, and
$$
\begin{aligned}
&\theta(n_1, n_2, \dots, n_M)\\
&=\begin{cases}
\alpha(\vk_\ell)\e^{-\la \vk_\ell, \vsg \ra} & \text{if }
(n_1, \dots, n_\ell, \dots, n_M)
=(0, \dots, 0, j_\ell-1, 0, \dots, 0), \, \ell=1, 2, \dots, M\\
0 & \text{otherwise}
\end{cases},
\end{aligned}
$$
where $\vsg \in \mathbb{R}^2 \setminus \{\bm{0}\}$. 
Then we have
$$
Z_S^{X, \Phi}(\vs)=
\sum_{\ell=1}^M \frac{\alpha(\vk_\ell)
\e^{-\la \vk_\ell, \vsg\ra}}{j_\ell^{\la \vc_\ell, \vs\ra}}
=\sum_{\vk \in \mathcal{M}}\alpha(\vk)\e^{\la \vk, \vs-\vsg\ra} 
$$
and 
$$
f_{\vsg}^{X, \Phi}(\vt)
=\frac{Z_S^{X, \Phi}(\vsg+\ii\vt)}{Z_S^{X, \Phi}(\vsg)}
=\sum_{\vk \in \mathcal{M}}\alpha(\vk)\e^{\ii\la \vk, \vt\ra}, 
\qquad \vt \in \mathbb{R}^2.
$$
Consequently, it is known that the $\mathbb{R}^2$-valued random
variable $\mathcal{Y}$ whose characteristic function is $f_{\vsg}^{X, \Phi}$
is represented as
${\bf P}(\mathcal{Y}=\vk)=\alpha(\vk), \, \vk \in \mathcal{M}$. 
This means that the random walk $\{\mathcal{W}_n\}_{n=0}^\infty$ 
generated by $Z_S^{X, \Phi}$ can visit all vertices whose 
graph distance from the current position is less than or equal to $N$
at each step. 

\end{ex}

\begin{ex}[hexagonal lattice] \normalfont

Let $d=2$ and $\Gamma=\la \gamma_1, \gamma_2\ra$. 
We define
$$
\begin{aligned}
V&:=\mathbb{Z}^2=\{x=(x_1, x_2) \, | \, x_1, x_2 \in \mathbb{Z}\},\\  
E&:=\big\{e=\{x, y\} \in \mathbb{Z}^2 \times \mathbb{Z}^2 \, | \,
y - x =(\pm 1, 0), (0, (-1)^{x_1+x_2})\big\}. 
\end{aligned}
$$
The $\Gamma$-action on $X=(V, E)$ is given by 
$\gamma_1x:=x+(1, 1)$ and 
$\gamma_2x:=x+(-1, 1)$ for $x \in V$. 
Then $X$
is a covering graph of a finite graph 
$X_0=(V_0, E_0)$ with the covering transformation group $\Gamma$, 
where $X_0=\{\bf{x}, {\bf y}\}$ and 
$E_0=\{e_1, e_2, e_3, \ol{e}_1, \ol{e}_2, \ol{e}_3\}$
(see Figure \ref{hexagonal}). 
Since the first homology group $\h_1(X_0, \mathbb{R})$
is of dimension 2, we see that $X$ is a maximal abelian
covering graph as well as the square lattice.

\begin{figure}[ht]
\begin{center}
\includegraphics[width=10cm]{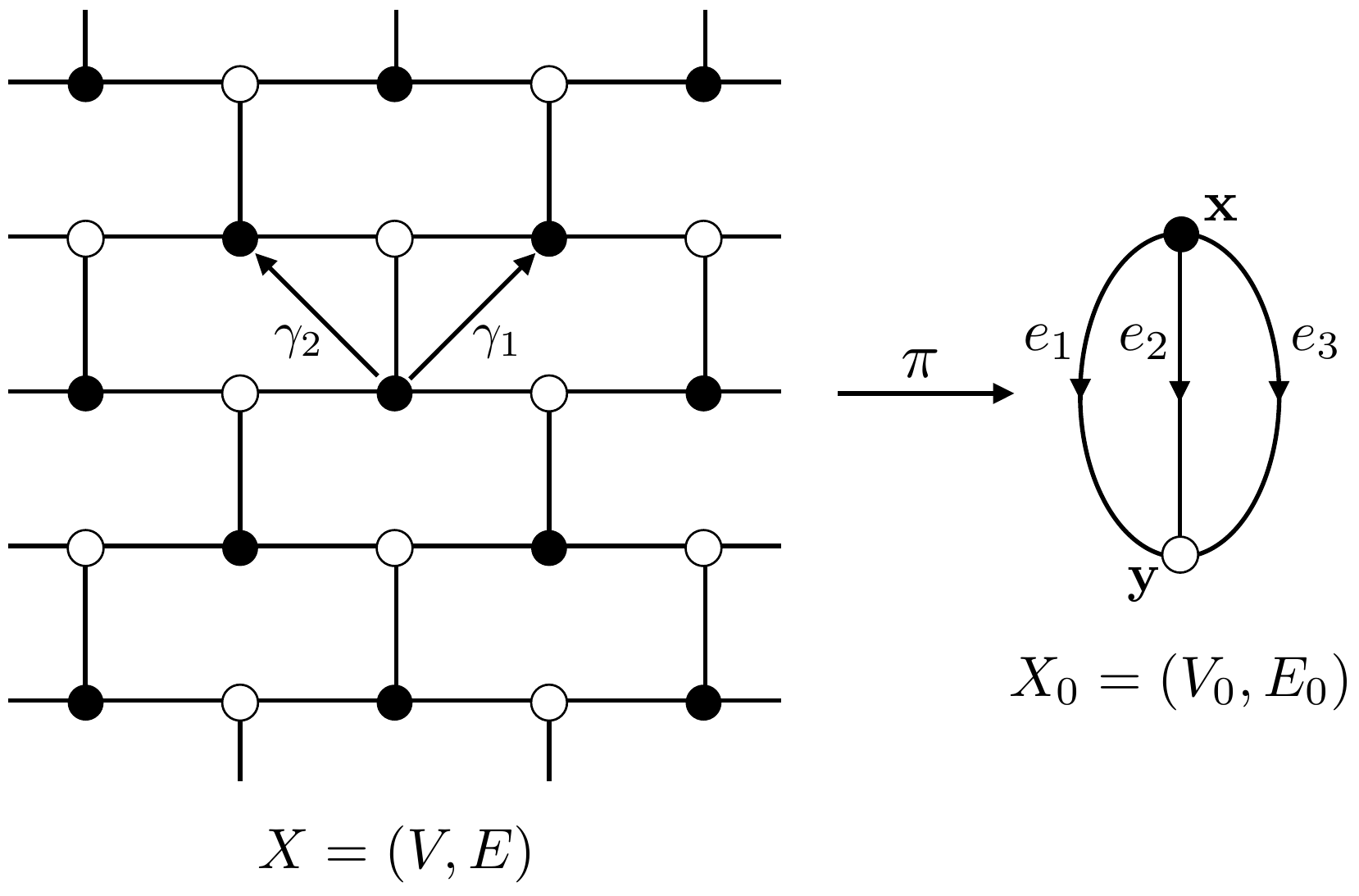}
\end{center}
\caption{the hexagonal lattice and its quotients graph.}
\label{hexagonal}
\end{figure}

\vskip5cm

A periodic realization $\Phi : V \to \mathbb{R}^2$ 
is defined as in the following. 
We put $\widetilde{x}=(0, 0)$ and $\widetilde{y}=(0, 1)$ in $V$ and let $\Phi(\widetilde{x})=\bm{0}$ and 
$\Phi(\widetilde{y})=(1/3, 2/3)$. 
Moreover, $\gamma_1, \gamma_2 \in \Gamma$ are 
identified with $(1, 0), (0, 1) \in \mathbb{R}^2$, respectively (see Figure \ref{realization of hexagonal}). 

\begin{figure}[ht]
\begin{center}
\includegraphics[width=6cm]{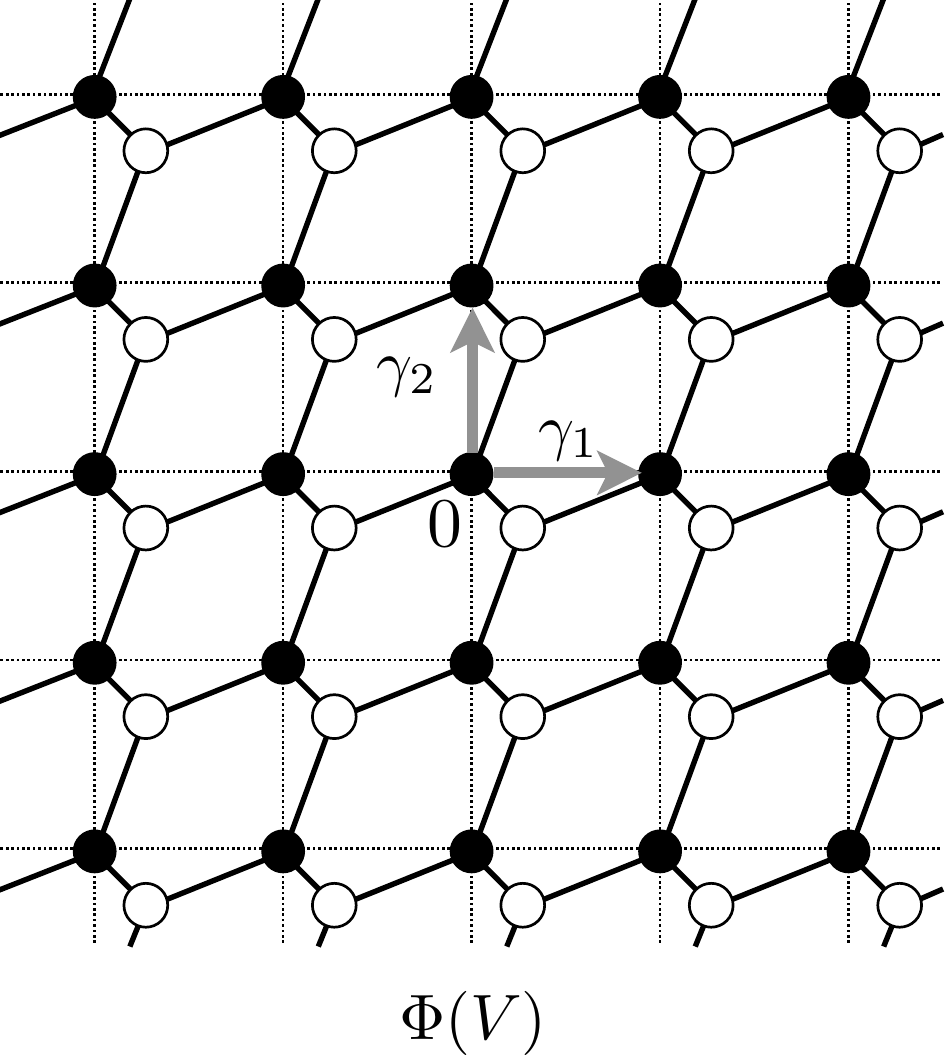}
\end{center}
\caption{a periodic realization of the hexagonal lattice.}
\label{realization of hexagonal}
\end{figure}

We define $\mathcal{J}(\Phi; {\bf x})$ and $\mathcal{J}(\Phi; {\bf y})$ by \eqref{jump set Shintani}, respectively. 
For ${\bf x}, {\bf y} \in V_0$, 
consider two multidimensional Shintani zeta functions given by
$$
\begin{aligned}
Z_S^{X, \Phi}({\bf x}, \vs):=\sum_{n_1, n_2, n_3=0}^\infty
\frac{\theta_{{\bf x}}(n_1, n_2, n_3)}{\prod_{\ell=1}^3\big(\sum_{j=1}^3 \lambda_{\ell j}(n_j+1)\big)^{\la \vc_\ell({\bf x}), \vs\ra}}, \qquad \vs \in \mathbb{C}^2,
\\
Z_S^{X, \Phi}({\bf y}, \vs):=\sum_{n_1, n_2, n_3=0}^\infty
\frac{\theta_{{\bf y}}(n_1, n_2, n_3)}{\prod_{\ell=1}^3\big(\sum_{j=1}^3 \lambda_{\ell j}(n_j+1)\big)^{\la \vc_\ell({\bf y}), \vs\ra}}, \qquad \vs \in \mathbb{C}^2,
\end{aligned}
$$
where $\vc_\ell({\bf x}) \in \mathcal{J}(\Phi; {\bf x})$
and $\vc_\ell({\bf y}) \in \mathcal{J}(\Phi; {\bf y})$
for $\ell=1, 2, 3$. 

Let $j_1, j_2, j_3$ be distinct positive integers greater than 1. 
Suppose that $\lambda_{\ell j}=\delta_{\ell j}$ for $\ell, j=1, 2, 3$, 
\begin{align*}
\vc_1({\bf x})&=\Big(-\frac{1}{3}(\log j_1)^{-1}, 
-\frac{2}{3}(\log j_1)^{-1}\Big), &
\vc_2({\bf x})&=\Big(-\frac{1}{3}(\log j_2)^{-1}, 
\frac{1}{3}(\log j_2)^{-1}\Big), \\
\vc_3({\bf x})&=\Big(\frac{2}{3}(\log j_3)^{-1}, 
\frac{1}{3}(\log j_3)^{-1}\Big), &
\vc_1({\bf y})&=\Big(\frac{1}{3}(\log j_1)^{-1}, 
\frac{2}{3}(\log j_1)^{-1}\Big), \\
\vc_2({\bf y})&=\Big(\frac{1}{3}(\log j_2)^{-1}, 
-\frac{1}{3}(\log j_2)^{-1}\Big), &
\vc_3({\bf y})&=\Big(-\frac{2}{3}(\log j_3)^{-1}, 
-\frac{1}{3}(\log j_3)^{-1}\Big)
\end{align*}
and 
$$
\theta_{{\bf x}}(n_1, n_2, n_3)
=\begin{cases}
\alpha_1 \e^{-\la (\frac{1}{3}, \frac{2}{3}), \vsg({\bf x})\ra} & 
\text{if }(n_1, n_2, n_3)=(j_1-1, 0, 0) \\
\alpha_2 \e^{-\la (\frac{1}{3}, -\frac{1}{3}), \vsg({\bf x})\ra} & 
\text{if }(n_1, n_2, n_3)=(0, j_2-1, 0) \\
\alpha_3 \e^{-\la (-\frac{2}{3}, -\frac{1}{3}), \vsg({\bf x})\ra} & 
\text{if }(n_1, n_2, n_3)=(0, 0, j_3-1) \\
0 & \text{otherwise}
\end{cases},
$$
$$
\theta_{{\bf y}}(n_1, n_2, n_3)
=\begin{cases}
\beta_1 \e^{-\la (-\frac{1}{3}, -\frac{2}{3}), \vsg({\bf y})\ra} & 
\text{if }(n_1, n_2, n_3)=(j_1-1, 0, 0) \\
\beta_2 \e^{-\la (-\frac{1}{3}, \frac{1}{3}), \vsg({\bf y})\ra} & 
\text{if }(n_1, n_2, n_3)=(0, j_2-1, 0) \\
\beta_3 \e^{-\la (\frac{2}{3}, \frac{1}{3}), \vsg({\bf y})\ra} & 
\text{if }(n_1, n_2, n_3)=(0, 0, j_3-1) \\
0 & \text{otherwise}
\end{cases},
$$
where $\alpha_1, \alpha_2, \alpha_3, \beta_1, \beta_2, \beta_3 \ge 0$, 
$\alpha_1+\alpha_2+\alpha_3=\beta_1+\beta_2+\beta_3=1$ and 
$\vsg({\bf x}), \vsg({\bf y}) \in \mathbb{R}^2 \setminus \{\bm{0}\}$. 
Then we obtain
$$
\begin{aligned}
Z_S^{X, \Phi}({\bf x}, \vs)&=\frac{\alpha_1 \e^{-\la (\frac{1}{3}, \frac{2}{3}), \vsg({\bf x})\ra}}{j_1^{\la \vc_1({\bf x}), \vs\ra}}
+\frac{\alpha_2 \e^{-\la (\frac{1}{3}, -\frac{1}{3}), \vsg({\bf x})\ra}}{j_2^{\la \vc_2({\bf x}), \vs\ra}}
+\frac{\alpha_3 \e^{-\la(-\frac{2}{3}, -\frac{1}{3}), \vsg({\bf x})\ra}}{j_3^{\la \vc_3({\bf x}), \vs\ra}}\\
&=\alpha_1\e^{\la (\frac{1}{3}, \frac{2}{3}), \vs-\vsg({\bf x})\ra}
+\alpha_2\e^{\la (\frac{1}{3}, -\frac{1}{3}), \vs-\vsg({\bf x})\ra}
+\alpha_3\e^{\la (-\frac{2}{3}, -\frac{1}{3}), \vs-\vsg({\bf x})\ra},\\
Z_S^{X, \Phi}({\bf y}, \vs)&=\frac{\beta_1 \e^{-\la (-\frac{1}{3}, -\frac{2}{3}), \vsg({\bf x})\ra}}{j_1^{\la \vc_1({\bf x}), \vs\ra}}
+\frac{\beta_2 \e^{-\la (-\frac{1}{3}, \frac{1}{3}), \vsg({\bf y})\ra}}{j_2^{\la \vc_2({\bf y}), \vs\ra}}
+\frac{\beta_3 \e^{-\la(\frac{2}{3}, \frac{1}{3}), \vsg({\bf x})\ra}}{j_3^{\la \vc_3({\bf y}), \vs\ra}}\\
&=\beta_1\e^{\la (-\frac{1}{3}, -\frac{2}{3}), \vs-\vsg({\bf y})\ra}
+\beta_2\e^{\la (-\frac{1}{3}, \frac{1}{3}), \vs-\vsg({\bf y})\ra}
+\beta_3\e^{\la (\frac{2}{3}, \frac{1}{3}), \vs-\vsg({\bf y})\ra}.
\end{aligned}
$$
Therefore, one has
$$
\begin{aligned}
f_{\vsg({\bf x})}(\vt)&=\frac{Z_S^{X, \Phi}(\vsg({\bf x})+\ii \vt)}
{Z_S^{X, \Phi}(\vsg({\bf x}))}=\alpha_1\e^{\ii(\frac{1}{3}t_1+\frac{2}{3}t_2)}
+\alpha_2\e^{\ii(\frac{1}{3}t_1-\frac{1}{3}t_2)}
+\alpha_3\e^{\ii(-\frac{2}{3}t_1-\frac{1}{3}t_2)}, \\
f_{\vsg({\bf y})}(\vt)&=\frac{Z_S^{X, \Phi}(\vsg({\bf y})+\ii \vt)}
{Z_S^{X, \Phi}(\vsg({\bf y}))}=\beta_1\e^{\ii(-\frac{1}{3}t_1-\frac{2}{3}t_2)}
+\beta_2\e^{\ii(-\frac{1}{3}t_1+\frac{1}{3}t_2)}
+\beta_3\e^{\ii(\frac{2}{3}t_1+\frac{1}{3}t_2)},
\end{aligned}
$$
for $\vt=(t_1, t_2) \in \mathbb{R}^2$. 
These characteristic functions corresponds to the $\mathbb{R}^2$-valued random variables 
$\mathcal{Y}_{{\bf x}}$ and $\mathcal{Y}_{{\bf y}}$ given by
$$
{\bf P}\Big(\mathcal{Y}_{{\bf x}}=\Big(\frac{1}{3}, \frac{2}{3}\Big)\Big)
=\alpha_1, \quad 
{\bf P}\Big(\mathcal{Y}_{{\bf x}}=\Big(\frac{1}{3}, -\frac{1}{3}\Big)\Big)
=\alpha_2, \quad 
{\bf P}\Big(\mathcal{Y}_{{\bf x}}=\Big(-\frac{2}{3}, -\frac{1}{3}\Big)\Big)
=\alpha_3 
$$
and
$$
{\bf P}\Big(\mathcal{Y}_{{\bf y}}=\Big(-\frac{1}{3}, -\frac{2}{3}\Big)\Big)
=\beta_1, \quad 
{\bf P}\Big(\mathcal{Y}_{{\bf y}}=\Big(-\frac{1}{3}, \frac{1}{3}\Big)\Big)
=\beta_2, \quad 
{\bf P}\Big(\mathcal{Y}_{{\bf y}}=\Big(\frac{2}{3}, \frac{1}{3}\Big)\Big)
=\beta_3,
$$
respectively, which means that the random walk $\{\mathcal{W}_n\}_{n=0}^\infty$ on $X$ generated by the multidimensional 
Shintani zeta functions is the usual nearest-neighbor one and also 
the simple one if $\alpha_1=\alpha_2=\alpha_3=\beta_1=\beta_2=\beta_3=1/3$. 

\end{ex}








\end{document}